\renewcommand{\theequation}{\thesection.\@arabic\c@equation}
\newcommand\Nopagebreak{\@nobreaktrue\nopagebreak}
\newtheorem{thm}{Theorem}[section]
\newtheorem{lemm}[thm]{Lemma}
\theoremstyle{definition}
\newtheorem{defn}[thm]{Definition}
\newtheorem{rem}[thm]{Remark}
\newtheorem{rems}[thm]{Remarks}
\def\ve{\varepsilon}
\def\ip#1{\left\langle #1\right\rangle}
\def\nm#1{\left| #1\right|}
\def\D{\mathrm{d}}
\newcommand{\at}[2]{\left.#1\right|_{#2}}
\def\ad{\operatorname{ad}}
\def\rank{\operatorname{rank}}
\def\diag{\operatorname{diag}}
\def\sgn{\operatorname{sgn}}
\def\Ric{\operatorname{Ric}}
\def\twovec#1#2{\begin{pmatrix}#1\\#2\end{pmatrix}}
\def\stwovec#1#2{\left(\begin{smallmatrix}#1\\#2\end{smallmatrix}\right)}
\def\R{\mathbb{R}}
\def\C{\mathbb{C}}
\def\pr{\mathbb{P}}
\def\I{\mathrm{I}}
\def\II{\mathbb{I}}
\def\rO{\mathrm{O}}
\def\cL{\mathcal{L}}
\def\cN{\mathcal{N}}
\def\cM{\mathcal{M}}
\def\fo{\mathfrak{o}}
\def\fu{\mathfrak{u}}
\def\fk{\mathfrak{k}}
\def\fa{\mathfrak{a}}
\def\fp{\mathfrak{p}}
\def\fgl{\mathfrak{gl}}
\def\cl#1{\overline{#1}}
\def\Id{\mathrm{Id}}
\def\V#1{\mathbf{#1}}
\def\lst#1#2{{#1}_1,\ldots,{#1}_{#2}}
\def\vect#1#2{\lst{\V{#1}}{#2}}
\def\l{\lambda}
\def\n{\, \vert\, }
\def\e{\epsilon}
\def\a{\alpha}
\def\cO{\mathcal{O}}
\def\ti{\tilde}
\newcommand{\bpm}{\begin{pmatrix}}
\newcommand{\epm}{\end{pmatrix}}
\def\otwonone{\frac{\rO(2n-1,1)}{\rO(n)\times \rO(n-1,1)}}
\def\li{\langle}
\def\ri{\rangle}
\def\g{\gamma}
\def\bb{{\bf b}}
\def\bc{{\bf c}}
\def\d{\delta}
\def\rd{{\rm d\/}}
\newenvironment{smatrix}{\left(\begin{smallmatrix}}{\end{smallmatrix}\right)}
\def\mathllap{\mathpalette\mathllapinternal}
\def\mathrlap{\mathpalette\mathrlapinternal}
\def\mathllapinternal#1#2{%
             \llap{$\mathsurround=0pt#1{#2}$}}
\def\mathrlapinternal#1#2{%
             \rlap{$\mathsurround=0pt#1{#2}$}}
\begin{document}

\title{Conformally flat submanifolds in spheres and  integrable systems}
\author{Neil Donaldson$^\dag$}\thanks{$^\dag$Research supported
in  part by NSF Advance Grant}
\address{Department of Mathematics, UCI, Irvine, CA 92697-3875}
\email{ndonalds@math.uci.edu}
\author{Chuu-Lian Terng$^\ast$}\thanks{$\ast$Research supported
in  part by NSF Grant DMS-0707132}
\address{Department of Mathematics, UCI, Irvine, CA 92697-3875}
\email{cterng@math.uci.edu}

\maketitle
\centerline{\today}

\begin{abstract}
\'E. Cartan proved that conformally flat hypersurfaces in $S^{n+1}$ for $n>~3$ have at most two distinct principal curvatures and locally envelop a one-parameter family of $(n-1)$-spheres. We prove that the Gauss-Codazzi equation for conformally flat hypersurfaces in $S^4$ is a soliton equation, and use a dressing action from soliton theory to construct geometric Ribaucour transforms of these hypersurfaces.  We describe the moduli of these hypersurfaces in $S^4$ and their loop group symmetries. We also generalise these results to conformally flat $n$-immersions in $(2n-2)$-spheres with flat and non-degenerate normal bundle.
\end{abstract}

\section{Introduction}
 
An immersion $f:M^n\to (N,g)$ is \emph{conformally flat} if there exists a flat metric in the conformal class of the induced metric $f^*g$: that is there exists a smooth function $u:M\to\R$ such that $e^{2u}f^\ast g$ is flat. This condition is equivalent to the Weyl tensor of $f^\ast g$ being zero when $n>3$, and to the Schouten tensor $S=\Ric(f^\ast g)-\frac R 4 f^\ast g$ being a Codazzi tensor when $n=3$ (i.e., $\nabla S$ is a symmetric $3$-tensor). 

The history of conformally flat immersions is long, the search for conformally flat submanifolds being a natural task in conformal geometry. The study of conformally flat \emph{hypersurfaces} in $S^{n+1}$ dates back to Cartan \cite{Cartan1917} who, demonstrated that the only such hypersurfaces for $n>3$ are the channel hypersurfaces: envelopes of a 1-parameter family of $(n-1)$-spheres. In particular these have (at most) two distinct principal curvatures. For $n=2$ the problem is uninteresting as every surface is conformally flat. For $n=3$, however, there are more varied conformally flat hypersurfaces: not only are there the channel examples, there also exist hypersurfaces with 3 distinct curvatures. These were first discussed by Hertrich--Jeromin \cite{Hertrich-Jeromin1994,Hertrich-Jeromin1996}, who moreover described the link between conformally flat hypersurfaces in $S^4$, curved flats in the space of circles in $S^4$, triply orthogonal systems, and Guichard nets. The classification of these hypersurfaces, however, remained unknown. 
 
Given a conformally flat immersion $f:M^n\to S^{2n-2}$ with flat normal bundle, we embed $S^{2n-2}$ naturally in the \emph{light-cone} $\cL^{2n-1,1}$ of isotropic vectors in a Lorentzian $\R^{2n-1, 1}$, and construct a \emph{flat lift} $F:M^n\to\cL^{2n-1,1}$: this $F$ is immersed, has flat induced metric, and flat normal bundle. Since both are flat, the tangent and normal bundle decomposition of the trivial $\R^{2n-1,1}$-bundle is a \emph{curved flat} \cite{Ferus1996} in the pseudo-Riemannian symmetric space $U/K=\otwonone$ (the Grassmannian of space-like $n$-planes in $\R^{2n-1, 1}$). We are thus immediately in the realm of integrable systems. 
  
Curved flats in $U/K$ with a \emph{good} co-ordinate system give rise to Terng's $U/K$-system (\cite{Terng1997}), which is constructed as follows: suppose that $\tau$ is the involution of the Lie group $U$ defined by $\tau(g)= \I_{n, n}g\I_{n,n}^{-1}$, where $\I_{n,n}=\begin{smatrix}\I_n&0\\0&-\I_n\end{smatrix}$ and $\I_n$ is the $n\times n$ identity matrix. Then $K$ is the fixed point set of $\tau$ in $U$. Let $\fu=\fk+\fp$ denote the $\pm$-eigenspace decomposition of $\D\tau_e$. $K$ then acts on $\fp$ by conjugation. Let $\fa$ be a maximal abelian subalgebra in $\fp$, and $\{\lst{a}{n}\}$ a basis of $\fa$. \emph{The $U/K$-system defined by $\fa$} is the following system for $\Xi:\R^n\to\fa^\perp\cap\fp$:
\begin{equation} \label{eq:uk}
[a_i,\Xi_{x_j}]-[a_j,\Xi_{x_i}]-[[a_i,\Xi],[a_j,\Xi]]=0,\quad i\neq j,
\end{equation}
where $\fa^\perp$ is the orthogonal complement of $\fa$ with respect to the Killing form. 

Unlike in the Riemannian symmetric case, not all maximal abelian subalgebras in $\fp$ are conjugate under $K$; there are both semi-simple and non-semisimple such subalgebras. We note that two maximal abelian subalgebras in $\fp$ conjugate under $K$ give rise to equivalent $U/K$-systems, while two non conjugate maximal abelian subalgebras in $\fp$ give non-equivalent $U/K$ systems. 

The normal bundle of an immersion is termed \emph{non-degenerate} if the dimension of the space of shape operators at each point is equal to the co-dimension. An immersion has \emph{uniform multiplicity one} if it has flat normal bundle and distinct curvature normals (equivalently all curvature distributions have rank one). It follows from the definition that an $n$-dimensional submanifold in $\R^{2n-1,1}$ with flat and non-degenerate normal bundle has uniform multiplicity one. We prove that a conformally flat $n$-immersion into $S^{2n-2}$ with uniform multiplicity one gives rise to a flat $n$-immersion in the light-cone $\cL^{2n-1,1}$ with flat non-degenerate normal bundle, and that the converse is also true. To study conformally flat $n$-immersions in $S^{2n-2}$ with uniform multiplicity one is thus the same as to study flat $n$-immersions in $\cL^{2n-1,1}$ with flat non-degenerate normal bundle. We show that there exist line of curvature co-ordinate systems for these flat immersions, and that their Gauss-Codazzi equations amount to the $U/K=\otwonone$-system defined by a semi-simple maximal abelian subalgebra $\fa$. Conversely, given a solution to the $U/K$-system defined by $\fa$, and a null vector $\bc\in\R^{n-1,1}$ we obtain a a conformally flat immersion in $S^{2n-2}$ with uniform multiplicity one.

Motivated by definitions in classical differential geometry, we call  a diffeomorphism $\phi:M\to \ti M$ between $n$-immersions in space forms with flat normal bundle a {\it Combescure transform\/} if $\phi$ maps principal directions of $M$ to those of $\ti M$ and they are parallel. A Combescure transform $\phi$ is {\it Christoffel\/} if it is orientation reversing.  Given a solution to the $U/K$-system defined by $\fa$ and a null vector $\bc\in \R^{n-1,1}$ with $\bc^t\bc= 2$, we construct a flat $n$-immersion $F_\bc$ in $\cL^{2n-1,1}$ with flat normal and non-degenerate bundle. Hence $F_\bc$ projects to a conformally flat immersion in $S^{2n-2}$ with uniform multiplicity one. Moreover, if $\bc$ and $\bb$ are null vectors with Euclidean length $\sqrt{2}$, then $F_\bc(x)\mapsto F_\bb(x)$ is a Combescure transform.

Because of the correspondence between solutions of the $U/K$-system and conformally flat $n$ immersions in $S^{2n-2}$ with flat and non-degenerate normal bundle, all the machinery of soliton theory applies: loop-group dressing of solutions to obtain new conformal flats or simply dressing vacuum solutions to obtain more complex explicit conformally flat immersions; existence results such as Cartan-K\"ahler and inverse scattering, etc. In particular:
\begin{enumerate}
\item We may dress solutions by special, simple elements, whose action may be calculated explicitly by residues. The action of such elements is seen to be by \emph{Ribaucour transforms} on conformal flats: corresponding immersions envelop (have first-order contact with) a congruence of $n$-spheres in such a way that principal curvature directions on the envelopes correspond under the congruence.
\item Local analytic conformally flat $n$-immersions in $S^{2n-2}$ are determined by $n^2-n$ functions of one variable.
\item The Cauchy problem for the $U/K$-system with rapidly decaying initial data on a regular line can be solved globally.\footnote{Recall that $a\in\fa$ is \emph{regular} if $\ad(a):\fa^\perp\cap\fp\to\fk$ is a injective.} Although the resulting $n$ dimensional submanifolds may have cusp singularities, the frame is globally defined and smooth. 
\item The moduli space of such immersions has a loop group symmetry.
\end{enumerate}

If the normal bundle is degenerate and the curvature distributions $E_i$ (common eigenspaces of the shape operators) have constant ranks, then we show that all but one of the $E_i$s have rank one. Such submanifolds are thus envelopes of $k$-parameter families of $(n-k)$-spheres. If, in addition, these immersions are assumed to have line of curvature co-ordinates, then the Gauss-Codazzi equations are the $U/K$-system defined by a non-semisimple maximal abelian subalgebra $\fa$ in $\fp$. Conversely, given a solution of the $U/K$-system defined by $\fa$, we obtain an $(n-2)$-parameter family of flat lifts, each of which gives rise to a conformally flat immersion in $S^{2n-2}$ with flat normal bundle, but not with uniform multiplicity one. When $n=3$, these give \emph{channel immersions}. Loop group dressing still works, and we can construct channel immersions from any germ of an $\fo(2n-1,1,\C)$-valued holomorphic map at $\l=\infty$ that satisfies the reality condition associated to $U/K$.

Most of the results for conformally flat $n$-immersions in $S^{2n-2}$ with uniform multiplicity one hold for conformally flat $n$-immersions in $S^{2n-2+k}$ with flat normal bundle, $n$ curvature normals such that the orthogonal complement of the subbundle spanned by $n$ curvature normals is flat. There exist line of curvature co-ordinates and a correspondence between such immersions and solutions of the $\frac{\rO(2n+k-1, 1)}{\rO(n)\times \rO(n+k-1,1)}$-system. 

The paper is organised as follows. In section \ref{sec:flatlift}, we generalise Hertrich--Jeromin's work on conformally flat immersions of hypersurfaces in $S^4$ to $n$-submanifolds in $S^{2n-2}$: we outline the light-cone model and how conformal flat immersions in the sphere correspond to genuine flat immersions in the light-cone, then consider the curvature distributions of corresponding maps, and how their fundamental forms compare. The link between conformal flats and the $U/K$-system is detailed in section \ref{sec:UK}, and we explain its generalisation to conformally flat $n$-immersions in $S^{2n-2+k}$ in section \ref{sec:codim}. We give a discussion of the dressing transformations of a negative loop on the space of solutions to the $U/K$-system and their associated conformally flat immersions; certain dressing transforms are shown to give rise to geometric Ribaucour transforms in section \ref{sec:rib}. In the final section, we show that solutions of the $U/K$-system defined by a non-semisimple maximal abelian subalgebra give rise to the channel immersions.

\section{Flat lifts, curvature spaces and co-ordinates}\label{sec:flatlift}

In this section we give definitions and explain, via the light-cone model, the correspondence between conformally flat $n$-dimensional immersions in $S^{2n-2}$ and  flat $n$-dimensional immersions in $\cL^{2n-1,1}$.  We also show the existence of line of curvature co-ordinates for conformally flat $n$-immersions in $S^{2n-2}$ with uniform multiplicity one.

\subsection*{The light-cone model}

The light-cone model of the conformal $m$-sphere is now well-understood, its prime advantage being that it \emph{linearises} conformal geometry in $S^m$: the fundamental objects of the theory, subspheres $S^k\subset S^m$ and their intersections, become the geometry of the Grassmannians $G_{m-k}^+(\R^{m+1,1})$ of definite signature planes. Hertrich-Jeromin's book \cite{Hertrich-Jeromin2003} contains an excellent introduction to this, as does Burstall's discussion of isothermic surfaces \cite{Burstall2004}.

Let $\I_{m+1,1}$ denote the diagonal $(m+2)\times(m+2)$ matrix $\diag(1,\ldots,1,-1)$ and $(x,y)=x^t\I_{m+1, 1}y$ the Lorentzian bilinear form on $\R^{m+1,1}$.
\[\cL^{m+1,1}=\{x\in \R^{m+1, 1}\n (x,x)=0\}\]
is the \emph{light-cone} of isotropic vectors in $\R^{m+1,1}$. Fix a choice of unit time-like vector $t_0$. The restriction of $(\ ,\ )$ to $t_0^\perp$ is positive definite, and hence $t_0^\perp$ is isometric to the Euclidean  $\R^{m+1}$. Let $S^m$ denote the set of unit vectors in $t_0^\perp$, then the map\footnote{Throughout we shall use $\ip{\ }$ for the span of a collection of vectors, usually dropping the brackets when referring to the perpendicular space to the span of a vector.}
\[t_0^\perp\supset S^m \to \cL^{m+1,1}:x \mapsto x+t_0\]
is clearly an isometry which, since each isotropic line $\ell\le\cL^{m+1,1}$ intersects the plane $t_0^\perp+t_0$ exactly once, diffeomorphically puts a metric on the projective light-cone $\pr(\cL^{m+1,1})$. However, any other choice of unit time-like $t'_0$ gives a different diffeomorphism and induces a different metric. Indeed the following compound map is seen to be a conformal diffeomorphism from one $m$-sphere to another:
\[\text{\phantom{$t_0^\perp\supset S^m$}}\xymatrix @C0pt @R3pt{\mathllap{t_0^\perp\supset S^m} & \cong & \pr(\cL^{m+1,1}) & \cong & \mathrlap{S^m \subset {t'_0}^\perp}\\
\mathllap{x} \ar@{|->}[rr] & & \ip{x+t_0} \ar@{|->}[rr] &  & \mathrlap{-\frac{x+t_0}{(x+t_0,t'_0)}-t'_0}\\
\mathllap{\D x^2} \ar@{|->}[rrrr] &&&& \mathrlap{\frac{1}{(x+t_0,t'_0)^2}\D x^2} }\text{\phantom{$\frac{x+t_0}{(x+t_0,t'_0)}-t'_0$}}\]
For this reason the projective light-cone is known as the \emph{conformal $m$-sphere}.

\begin{center}
\includegraphics[scale=0.80]{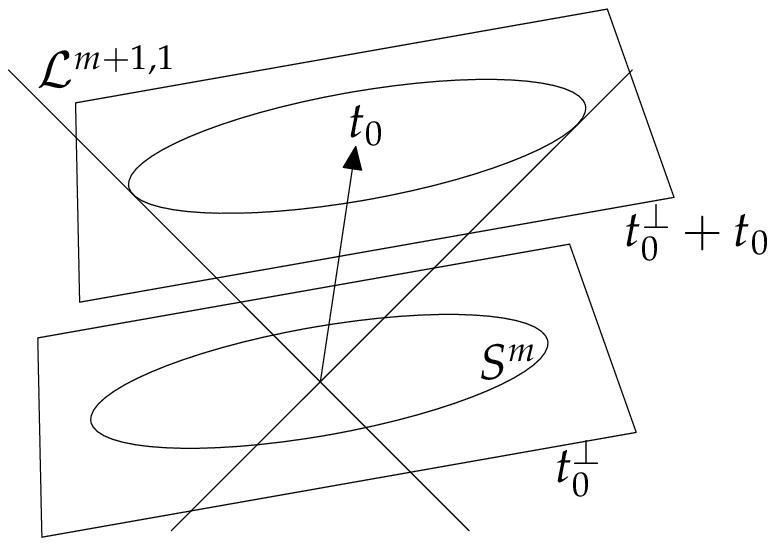}
\end{center}

All geometric properties of $S^m$ that are genuinely conformal are detectable directly in the light-cone and do not depend on the choice of $t_0$, as can be seen by the following theorem.

\begin{thm}[Liouville]
The action of $\rO(m+1,1)$ on the light-cone, and thus on any choice of $m$-sphere $S^m\subset t_0^\perp$, is said to be by \emph{M{\"o}bius transformations}. For $n\ge 3$ \emph{all} (even local!) conformal diffeomorphisms of $S^m$ are M{\"o}bius.
\end{thm}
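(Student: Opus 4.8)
The first assertion is essentially a definition together with the conformal diffeomorphism already displayed above: any $A\in\rO(m+1,1)$ preserves $\cL^{m+1,1}$ and permutes its isotropic lines, so it descends to a diffeomorphism of $\pr(\cL^{m+1,1})$; transporting this to a model sphere $S^m\subset t_0^\perp$ via $x\mapsto\ip{x+t_0}$ and repeating the computation shown above, one finds that the round metric is pulled back to a positive multiple of itself, so the $\rO(m+1,1)$-action is conformal. The substance of the theorem is the converse for $m\ge 3$, which I would prove in the classical way.

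First I would reduce to flat space: stereographic projection is the restriction of a Möbius transformation, so it suffices to show that every conformal diffeomorphism $\phi$ between connected open subsets $U,V$ of Euclidean $\R^m$, $m\ge 3$, is the restriction of a Möbius transformation of $\R^m$. (A conformal map is automatically smooth, so the curvature computation below is legitimate; and the whole argument is local, which is exactly why the ``even local'' strengthening costs nothing.) Write $\phi^\ast(\D x^2)=e^{2\lambda}\D x^2$. Being the pullback of a flat metric by a diffeomorphism, $e^{2\lambda}\D x^2$ is flat, and since $m\ge 3$ its Ricci-flatness forces the overdetermined system
\[\partial_i\partial_j\lambda-\partial_i\lambda\,\partial_j\lambda=0\ \ (i\ne j),\qquad
\partial_i^2\lambda-(\partial_i\lambda)^2=-\tfrac12|\nabla\lambda|^2;\]
the factor $m-2$ multiplying the off-diagonal equations shows the hypothesis $m\ge 3$ is essential (for $m=2$ these equations are absent, only $\lambda$ harmonic survives, and the conformal group is infinite-dimensional).

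Next, the substitution $v=e^{-\lambda}$ linearises the system to $\partial_i\partial_j v=0$ for $i\ne j$, $\partial_i^2 v$ independent of $i$, and $2v\,\partial_i^2 v=|\nabla v|^2$. The first two conditions force $v=a|x|^2+\ip{b,x}+c$ with $a,c\in\R$, $b\in\R^m$; the third then forces $|b|^2=4ac$, so $v$ is either a positive constant or $a|x-p|^2$ with $a>0$. In the first case $\rho=e^{\lambda}=v^{-1}$ is constant, so $\phi$ is a similarity. In the second, a short computation gives $(\phi\circ\iota_p)^\ast(\D x^2)=a^{-2}\,\D x^2$, where $\iota_p$ is the inversion centred at $p$; hence $\phi\circ\iota_p$ is a similarity, and so $\phi=(\phi\circ\iota_p)\circ\iota_p$ is a composition of Möbius transformations. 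Either way $\phi$ is the restriction of a single Möbius transformation of $\R^m$ on all of the connected set $U$. Conjugating back by stereographic projection and recalling that the Möbius group of $\R^m$ is realised by the action of $\rO(m+1,1)$ on $\pr(\cL^{m+1,1})$ (see \cite{Hertrich-Jeromin2003}) then completes the proof.

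The only genuinely non-formal step will be extracting the displayed system from flatness, together with recognising that $m\ge 3$ is precisely what produces the rigid off-diagonal equations; once the system is in hand, the linearisation $v=e^{-\lambda}$, the identification of the admissible conformal factors with those of Möbius maps, and the reduction of a map with trivial conformal factor to an element of $\rO(m)\ltimes\R^m$ (its second derivatives vanish) are all elementary.
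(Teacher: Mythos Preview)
The paper does not prove Liouville's theorem: it is quoted as a classical result, with the surrounding light-cone formalism attributed to standard references such as \cite{Hertrich-Jeromin2003} and \cite{Burstall2004}. There is therefore no proof in the paper to compare against.

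That said, your argument is the standard proof and is correct. The Ricci-flatness of $e^{2\lambda}\D x^2$ (which holds because this metric is the diffeomorphic pullback of a flat metric) yields exactly the displayed system when $m\ge 3$; the substitution $v=e^{-\lambda}$ linearises it to $\partial_i\partial_j v=0$ for $i\ne j$ together with $2v\,\partial_i^2 v=|\nabla v|^2$ independent of $i$; the general solution $v=a|x|^2+\ip{b,x}+c$ with $|b|^2=4ac$ follows; and the dichotomy ``similarity'' versus ``inversion followed by similarity'' is handled cleanly. Your remark that the factor $m-2$ in front of the off-diagonal equations is what fails in dimension $2$ is exactly the point. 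One small caveat: the parenthetical ``a conformal map is automatically smooth'' conceals a genuinely nontrivial regularity theorem (sharp versions are due to Gehring, Reshetnyak and others), but since the ambient paper works entirely in the smooth category this is harmless here.
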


Liouville's theorem allows us to treat submanifolds of the conformal sphere similarly to those in metric geometry: the existence of submanifolds \emph{up to isometry} is replaced by \emph{up to M{\"o}bius transforms}; in the light-cone picture these really are isometries. In particular, starting from submanifolds in $\cL^{m+1,1}$, we need not worry about specific choices of $t_0$, since examining the submanifold via any other choice merely amounts to a M{\"o}bius transform. We will, however, tend to assume that a fixed choice has been made, if only so that we may anchor discussions in a genuine $S^m$.

\begin{defn}
Given a map $f:M\to S^m\subset t_0^\perp$, a \emph{lift} of $f$ is any map $F:M\to\cL^{m+1,1}$ such that $f+t_0\in\ip{F}$. A \emph{flat lift} is a lift such that the induced metric $\nm{\D F}^2$ on $M$ is flat.
\end{defn}

\begin{rems}
To facilitate computations, we set up notations for moving frames of flat lifts $F:M^n\to \R^{2n-1,1}$. Suppose that $g=(\vect{e}{2n})$ is an $\rO(2n-1,1)$ frame on $M$ such that $\vect{e}{n}$ are tangent to $F(M)$. Let $\lst{\omega}{n}$ be the 1-forms dual to $\vect{e}{n}$. Then $\D F=\sum_{i=1}^n \omega_i \V e_i$. Write
\[\D\V e_A= \sum_{B=1}^{2n} \omega_{BA}\V e_B.\]
Then $g^{-1}\D g= (\omega_{AB})$ and
\begin{gather*}
\omega_{BA}\e_B + \e_A \omega_{AB}=0, \quad {\rm where\,\, } \I_{2n-1, 1}= \diag(\lst{\e}{2n}),\\
\D\omega_{AB}= -\sum_{C=1}^{2n}\omega_{AC}\wedge\omega_{CB}.
\end{gather*}
The shape operator is
$A_{\V e_\a}=-\pi_{\ip{\D F}}(\D\V e_\a)$, the tangential component.  The two fundamental forms and the normal connection are:
\begin{gather*}
\I_F= \sum_{i=1}^n \omega_i^2,\qquad
\II_F=\sum_{i=1}^n \sum_{\a=n+1}^{2n} \omega_i \omega_{\a, i}\V e_\a,\\
\nabla^\perp\V e_\a = \sum_{\beta=n+1}^{2n}\omega_{\beta\a}\V e_\beta, \quad n+1\leq \a\leq 2n.
\end{gather*}
\end{rems}

We may now state the main correspondence of the paper (due to Hertrich-Jeromin \cite{Hertrich-Jeromin1996} when $n=3$).

\begin{thm}
A conformally flat immersion $f:M^n\to S^{2n-2}$ with flat normal bundle has a flat lift $F:M^n\to\cL^{2n-1,1}$ with flat normal bundle.\\
Conversely, any immersed $F:M^n\to\cL^{2n-1,1}$ with a flat, definite signature, metric $\nm{\D F}^2$ and flat normal bundle is a flat lift of a conformally flat immersion $f:M^n\to S^{2n-2}$ with flat normal bundle.
\end{thm}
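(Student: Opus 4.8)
The plan is to exploit the light-cone model and the conformal invariance guaranteed by Liouville's theorem, reducing everything to a local metric computation after a judicious choice of time-like vector $t_0$. For the forward direction, I would start with a conformally flat immersion $f:M^n\to S^{2n-2}$, so there is a function $u:M\to\R$ with $e^{2u}f^\ast g$ flat. Composing the embedding $S^{2n-2}\hookrightarrow\cL^{2n-1,1}$ with $f$ gives a lift $\hat F=f+t_0$ whose induced metric is $f^\ast g$, and I would then define $F=e^{-u}\hat F$. Since any two lifts of $f$ differ by multiplication by a nowhere-zero function, $F$ is again a lift, and since replacing $\hat F$ by $e^{-u}\hat F$ scales the induced metric by $e^{-2u}$ (because $\hat F$ is isotropic, the cross term $(\D(e^{-u})\hat F, e^{-u}\D\hat F)$ contributes a term proportional to $(\hat F,\D\hat F)=\tfrac12\D(\hat F,\hat F)=0$), the induced metric $\nm{\D F}^2$ equals $e^{-2u}f^\ast g$, which is flat by hypothesis. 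This shows $F$ is a flat lift. It remains to check that $F$ has flat normal bundle in $\R^{2n-1,1}$: the normal bundle of $F$ at a point $p$ is the orthogonal complement (in $\R^{2n-1,1}$) of $\ip{F(p),\D F_p(T_pM)}$. I would relate the normal connection of $F$ to that of $f$ together with the connection data coming from the isotropic direction $\ip{F}$ and a complementary null direction; the key observation is that the extra rank-2 summand spanned by these two null lines carries a flat connection (its structure is governed only by $\D u$), so the curvature of $\nabla^\perp_F$ splits as the curvature of $\nabla^\perp_f$ plus a flat piece, and vanishes exactly when $\nabla^\perp_f$ is flat.

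For the converse, I would take $F:M^n\to\cL^{2n-1,1}$ immersed with flat definite-signature induced metric and flat normal bundle. Fix any unit time-like $t_0$; since $F$ takes values in the light-cone and (generically, or after a Möbius transform using Liouville) avoids the isotropic line $\ip{t_0}^\perp$-degeneracies, the quantity $(F,t_0)$ is nowhere zero, so I can rescale $F$ to $\hat F=-F/(F,t_0)$, which now satisfies $(\hat F,t_0)=-1$ and hence lies in the affine hyperplane $t_0^\perp+t_0$; writing $\hat F=f+t_0$ defines $f:M\to S^{2n-2}\subset t_0^\perp$. Then $f$ is an immersion (it differs from the immersion $F$ by a conformal rescaling and a shift), and setting $e^{u}=|(F,t_0)|^{-1}$ or its reciprocal as appropriate, the induced metric $f^\ast g=\nm{\D\hat F}^2=e^{2u}\nm{\D F}^2$ is conformal to the flat metric $\nm{\D F}^2$; hence $f$ is conformally flat. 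Flatness of the normal bundle of $f$ follows from the same splitting argument run in reverse: $\nabla^\perp_f$ is a sub-bundle connection obtained from $\nabla^\perp_F$ by discarding the flat rank-2 null summand, so it too is flat.

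The main obstacle I anticipate is the normal-bundle bookkeeping in both directions — making precise how the normal connection of the lift in $\R^{2n-1,1}$ decomposes relative to the normal connection of the map into the sphere. Concretely, one must augment a frame $\vect e n$ tangent to $f(M)$ and $\vect e {2n-2}$ normal-in-the-sphere by the two null directions $F$ and its dual $N$ (characterised by $(N,N)=0$, $(N,F)=1$, $(N,\D F)=0$), and then compute $\D N$ in this frame; the connection forms $\omega_{\a i}$ coupling tangent to sphere-normal directions are unchanged, while the new forms involving $F$ and $N$ are expressed purely in terms of $\D u$ and the first fundamental form, so they contribute no curvature. I would organise this using the moving-frame formalism and structure equations $\D\omega_{AB}=-\sum_C\omega_{AC}\wedge\omega_{CB}$ set up in the Remarks above, isolating the $\fo(n)\oplus\fo(n-2)$-block (sphere tangent/normal) from the remaining block and checking the off-diagonal curvature vanishes; everything else is a routine though slightly lengthy computation. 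A subtlety worth flagging is the definite-signature hypothesis: it ensures $\vect e n$ really span a space-like tangent plane so that the orthogonal complement has the right signature $\R^{n-2}\times\R^{1}$ split into sphere-normal and time directions, which is what lets the construction land in an honest $S^{2n-2}$.
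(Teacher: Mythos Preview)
Your overall strategy matches the paper's --- rescale $f+t_0$ to get $F$, then augment a parallel frame for $N_f$ by $F$ and its null dual $\hat F$ (your $N$) and analyse the connection matrix --- but there is a slip and a genuine gap. The slip: with $e^{2u}f^*g$ flat, the lift must be $F=\pm e^{u}(f+t_0)$, not $e^{-u}(f+t_0)$; your choice yields $\nm{\D F}^2=e^{-2u}f^*g$, the wrong conformal representative.

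The gap is in the normal-bundle step. You claim the curvature of $\nabla^\perp_F$ ``splits as the curvature of $\nabla^\perp_f$ plus a flat piece'', and later that the extra connection forms are ``expressed purely in terms of $\D u$ and the first fundamental form, so they contribute no curvature''. Neither is correct. In the frame $(\V n_j,F,\hat F)$ for $N_F$ the two diagonal blocks are indeed zero, but there are nonzero off-diagonal one-forms $\eta_j=(\hat F,\D\V n_j)$ coupling $\hat F$ to the $\V n_j$; these are \emph{not} determined by $\D u$, and the curvature of $N_F$ has entries $\D\eta_j$ which nothing in your outline forces to vanish. The paper's mechanism is different and uses the flatness of the \emph{tangent} bundle of $F$, not of $N_f$: taking the $\V e_i$ parallel for the flat metric $\I_F$ gives $\D\omega_i=0$, the Maurer--Cartan equation for the tangent/sphere-normal block reads $\D\omega_{ij}=\omega_i\wedge\eta_j$, and applying $\D$ again yields $\omega_i\wedge\D\eta_j=0$ for every $i$, whence each $\eta_j$ is closed. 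So it is the structure equations linking tangent and normal data --- not a block-diagonal splitting of $\nabla^\perp_F$ --- that propagate flatness from $N_f$ to $N_F$. The same objection applies to your converse (``discarding the flat rank-2 null summand''): $N_f$ is not a $\nabla^\perp_F$-parallel sub-bundle of $N_F$, so flatness does not descend by restriction. The paper instead takes parallel space-like sections $\V N_i$ of $\ip{F,\D F}^\perp\subset N_F$ and checks directly that the explicit projections $\V n_i:=\V N_i-\tfrac{(\V N_i,t_0)}{(F,t_0)}F$ form a parallel orthonormal frame of $N_f$.
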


\begin{rem}
For reference we collect some important relations. If $e^{2u}\I_f$ is flat, we will define, in the proof, a flat lift $F$ as below; similarly, given a flat lift $F$ and a choice of $t_0$, we recover a conformally flat $f$. Everything is related by the following notation:
\[F=-e^u(f+t_0),\qquad f=-\frac{F}{(F,t_0)}-t_0,\qquad e^u=(F,t_0).\]
Since $\pm F$ project to the same $f$, we can always assume that $(F,t_0)$ is positive.

The locations of the normal bundles viewed as subbundles of the trivial bundle are as follows, where both decompositions are orthogonal:
\[M^n\times\R^{2n-1,1}=\ip{\D f}\oplus\ip{f}\oplus\ip{t_0}\oplus N_f=\ip{\D F}\oplus N_F.\]
It is important to note that $N_f\subset N_F\cap F^\perp$.\\
The metric $\nm{\D F}^2$ will almost always be definite: the only time this doesn't happen is if some tangent direction is parallel to $F$ itself (anything else forces a contradiction by requiring either a 2-dimensional isotropic or time-like subspace of $\cL^{2n-1,1}$). It follows that, for us, the \emph{metric} on the normal bundle $N_F$ is always non-degenerate and of signature $(n-1,1)$.
\end{rem}

\begin{proof}
\begin{enumerate}
  \item Suppose that $e^{2u}\I_f$ is flat and define $F:=-e^u(f+t_0)$. Then $\D F=\D u F-e^u\D f$, and thus $F$ is an immersion. Moreover $\I_F=\nm{\D F}^2=e^{2u}\I_f$ is flat, hence $F$ is a flat lift, with positive definite tangent bundle. As is well-known, the bundle $\ip{\D F}$ is then flat (the induced connection is $F$-related to the Levi-Civita of $\I_F$ on $M$).

Now let $\at{\V e_i}{i=1}^n$ be parallel orthonormal sections of $\ip{\D F}$ and $\at{\V n_j}{j=1}^{n-2}$ parallel orthonormal sections of the normal bundle $N_f$. Moreover let $\hat F$ be the unique isotropic section of $N_F$ such that $(\hat F,\V n_j)=0$ and $(\hat F,F)=1$. By the flatness of $\I_F$ and $N_f$, there exist 1-forms such that the moving frame equations are
\[\D
\begin{pmatrix}
 \V e_i&\V n_j&F&\hat F
\end{pmatrix}=
\begin{pmatrix}
 \V e_k&\V n_l &F &\hat F
\end{pmatrix}
\begin{pmatrix}
 0 & \omega_{kj} &\omega_k & \hat\omega_k\\
 \omega_{li} & 0 & 0 & -\eta_l\\
 -\hat\omega_i & \eta_j & 0 & 0\\
 -\omega_i & 0 & 0 & 0
\end{pmatrix}.\]
Here $\I_F=\sum_i\omega_i^2$. The Maurer--Cartan equations quickly yield $\D\omega_i=0$ and $\D\omega_{ij}=\omega_i\wedge\eta_j$, from which we conclude that
\[\omega_i\wedge\D\eta_j=-\D\omega_i\wedge\eta_j=0,\quad\forall i,j.\]
Since the $\omega_i$ form a base of $T^*M$ it follows that the $\eta_j$ are closed and thus the normal bundle $N_F$ is flat.
\item Suppose that an immersed $F$ has flat tangent and normal bundles. Then $f:=-\frac{F}{(F,t_0)}-t_0$ is easily seen to be immersed. Now let $\at{\V N_i}{i=1}^{n-2}$ be any space-like parallel orthonormal sections of $\ip{F,\D F}^\perp$. Defining
\[\V n_i:=\V N_i-\frac{(\V N_i,t_0)}{(F,t_0)}F,\]
it is easy to see that the $\V n_i$ are a parallel orthonormal frame of $N_f$ and hence $f$ has flat normal bundle.\qedhere
\end{enumerate}
\end{proof}

We have now reduced the study of conformally flat $n$-immersions in $S^{2n-2}$ with flat normal bundle to the study of flat $n$-immersions in $\cL^{2n-1,1}$ with flat normal bundle.

\subsection*{Curvature distributions and flat non-degenerate normal bundle}

Suppose that $F$ has flat normal bundle and that the \emph{curvature distributions} of $F$ have constant rank: that is the tangent bundle of $M$ decomposes orthogonally and smoothly as
\[TM=\bigoplus_{i=1}^p E_i,\]
where each $E_i(x)$ is an eigenspace of all the shape operators $A_v(x)$, and the ranks of the $E_i$ are constants. We say that $F$ has \emph{multiplicity}\footnote{$F$ thus has \emph{uniform multiplicity one} if $p=n$ and $m_i=1$ for all $i$.} $(m_1,\ldots,m_p)$ if each $\rank E_i=m_i$. To each curvature distribution there corresponds a \emph{curvature normal} $v_i\in\Gamma N_F$ such that $\at{A_v}{E_i}=(v,v_i)\Id$ (see e.g.\ \cite{Terng1987}). Suppose that $(\vect{e}{n})$ is an $\rO(n)$ tangent frame consisting of principal curvature directions and $(\V e_{n+1}, \ldots,\V e_{2n})$ a parallel $\rO(n-1,1)$ normal frame. Moreover let $\omega_{i\a}= \l_{i\a}\omega_i$ for $1\leq i\leq n$ and $n+1\leq \a\leq 2n$. Then 
\[v_i =\sum_{\a=n+1}^{2n} \e_\a \l_{i\a}\V e_\a.\]

\begin{lemm}\label{lemm:Fcurv}
If $F$ is an immersed flat lift whose principal curvatures along any parallel normal fields have constant multiplicities, then its curvature normals are mutually orthogonal. If the curvature distribution $E_i$ has rank $\ge 2$, then the corresponding curvature normal $v_i$ is isotropic.
\end{lemm}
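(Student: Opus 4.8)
The plan is to read off both assertions from the Gauss equation, using that a flat lift is by construction an immersion of $M^n$ into the \emph{flat} space $\R^{2n-1,1}$ whose induced metric $\nm{\D F}^2$ is itself flat (and, by the preceding Remark, positive definite, since a tangent direction parallel to $F$ is ruled out).

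First I would record the principal picture already implicit in the hypothesis on the multiplicities. Flatness of the normal bundle $N_F$ gives, via the Ricci equation with vanishing ambient curvature, that the shape operators $A_v$ ($v\in\Gamma N_F$) commute; being self-adjoint for the positive definite tangent metric, they are simultaneously diagonalised by an $\rO(n)$ tangent frame $(\vect{e}{n})$ of principal directions adapted to $TM=\bigoplus_i E_i$. In this frame $\II_F(\V e_j,\V e_k)=0$ for $j\ne k$, while $\II_F(\V e_j,\V e_j)=-v_i$ whenever $\V e_j\in E_i$ --- this is the adjointness $(\II_F(X,Y),v)=(A_vX,Y)$ combined with the defining property $\at{A_v}{E_i}=(v,v_i)\Id$ of the curvature normal, and in particular $\II_F(\V e_j,\V e_j)$ depends only on the distribution $E_i$ containing $\V e_j$, not on the chosen unit vector.

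Next I would apply the Gauss equation. Since $\R^{2n-1,1}$ and $(M,\nm{\D F}^2)$ are both flat, the Gauss term vanishes identically:
\[(\II_F(X,W),\II_F(Y,Z))=(\II_F(X,Z),\II_F(Y,W))\qquad\text{for all tangent }X,Y,Z,W.\]
Taking $X=W=\V e_j$ and $Y=Z=\V e_k$ with $j\ne k$ kills the right-hand side, leaving $(v_i,v_{i'})=0$ where $\V e_j\in E_i$ and $\V e_k\in E_{i'}$. If $i\ne i'$ this is the asserted mutual orthogonality of the curvature normals; if $\rank E_i\ge 2$ one may take $j\ne k$ with both $\V e_j,\V e_k\in E_i$, whence $(v_i,v_i)=0$ and $v_i$ is isotropic. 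The same conclusion can be reached purely inside the moving-frame formalism of the Remarks: flatness of $\ip{\D F}$ makes the curvature $2$-form $-\sum_\a\omega_{i\a}\wedge\omega_{\a j}$ of the Levi--Civita connection vanish, and inserting $\omega_{i\a}=\l_{i\a}\omega_i$ and $\omega_{\a j}=-\e_\a\l_{j\a}\omega_j$ rewrites it as $(v_i,v_j)\,\omega_i\wedge\omega_j=0$ for all $i,j$.

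I do not foresee a genuine obstacle here: once the two inputs --- simultaneous diagonalisability of the shape operators (flat normal bundle) and the vanishing of the Gauss term (flat induced metric) --- are in hand, both statements follow from a single substitution into a principal orthonormal frame. The only points to watch are that the principal frame may indeed be taken orthonormal and that $\II_F(\V e_j,\V e_j)$ depends only on the curvature distribution; and it is worth underlining that ``isotropic'' is a non-vacuous conclusion precisely because the metric on $N_F$ has signature $(n-1,1)$ --- it is this one negative direction that makes a curvature distribution of rank $\ge 2$ possible at all.
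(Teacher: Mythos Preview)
Your proof is correct and is essentially the paper's own argument: both read the statements off the Gauss equation evaluated on a principal orthonormal frame, obtaining $(v_i,v_j)\,\omega_i\wedge\omega_j=0$ for $i\ne j$, which yields orthogonality of distinct curvature normals and isotropy of any curvature normal attached to a distribution of rank $\ge 2$. The paper compresses this into two lines using the moving-frame formalism directly, whereas you first give the coordinate-free Gauss identity and then note the moving-frame version; the content is the same.
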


\begin{proof}
Since $F$ is flat, the Gauss equation implies that 
\[\sum_{\a=n+1}^{2n} \omega_{i\a}\wedge \omega_{\a j}= (v_i, v_j) \omega_i\wedge \omega_j =0,\]
which yields $(v_i,v_j)=0$. Distinct curvature normals are therefore orthogonal, and repeated curvature normals are necessarily isotropic.
\end{proof}

It is now an (almost) obvious corollary that there can only be one $\rank\ge 2$ distribution. A full discussion will wait until section \ref{sec:channel}. Note that a flat immersion $F$ with flat normal bundle is non-degenerate iff all curvature distributions have rank 1 (uniform multiplicity one) \emph{and} all curvature normals are non-zero: theorem \ref{thm:Ffcurvs} will show that this second condition is vacuous.

\subsection*{Non-degeneracy and line of curvature co-ordinates}

The following theorem describes how immersions with non-degenerate normal bundle come equipped with line of curvature co-ordinates.

\begin{thm}\label{thm:coords}
Let $F:M^n\to\cL^{2n-1,1}$ be flat with flat non-degenerate normal bundle and curvature normals $v_1,\ldots,v_n$. Then there exist curvature line co-ordinates $x_1,\ldots,x_n$ on $M$. Moreover $F$ and $\I_F$ can be written entirely in terms of the curvature normals as
\begin{gather}\begin{gathered}\label{eq:Fexplicit}
F=-\sum_{j=1}^n\frac{v_j}{(v_j, v_j)},\\
\I_F= \sum_{i=1}^n \frac{\D x_i^2}{|(v_i, v_i)|}.
\end{gathered}\end{gather}
\end{thm}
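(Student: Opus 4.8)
I would read everything off the moving-frame relations recorded above, together with Lemma~\ref{lemm:Fcurv}. The first observation is that non-degeneracy makes $v_1,\dots,v_n$ an orthogonal basis of $N_F$ consisting of non-isotropic vectors: the space of shape operators at a point is the image of the map $v\mapsto\big((v,v_1),\dots,(v,v_p)\big)$ on $N_F$, which (the $N_F$-metric being non-degenerate) has dimension $\dim\ip{v_1,\dots,v_p}$, so non-degeneracy of the normal bundle forces $p=n$ and the $v_i$ to be independent; they are pairwise orthogonal by Lemma~\ref{lemm:Fcurv}, hence form an orthogonal basis, and an orthogonal basis of a non-degenerate space has no isotropic member, so $(v_i,v_i)\ne0$. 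I would also record that $F$ is a section of $N_F$: differentiating $(F,F)=0$ gives $(F,\V e_i)=0$ for every tangent $\V e_i$.

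Next I would prove the formula for $F$. Because $\D F=\sum_i\omega_i\V e_i$ is everywhere tangent to $F(M)$, the shape operator of the normal section $F$ is $A_F=-\pi_{\ip{\D F}}(\D F)=-\D F$, that is $A_F=-\Id$ on $TM$; comparing with $A_F|_{E_i}=(F,v_i)\Id$ gives $(F,v_i)=-1$ for every $i$, and expanding $F$ in the orthogonal basis $\{v_i\}$ of $N_F$ gives $F=\sum_i\frac{(F,v_i)}{(v_i,v_i)}\,v_i=-\sum_i\frac{v_i}{(v_i,v_i)}$. (As a byproduct $(F,F)=0$ now reads $\sum_i(v_i,v_i)^{-1}=0$.)

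For the co-ordinates I would write $\omega_{ij}=\sum_l\Gamma_{ij}^l\omega_l$ and expand the Codazzi equations, obtained from the Maurer--Cartan relation $\D\omega_{i\a}=-\sum_C\omega_{iC}\wedge\omega_{C\a}$ together with $\omega_{i\a}=\l_{i\a}\omega_i$ and the parallel normal frame, into
\[\D\l_{i\a}\wedge\omega_i=\sum_k(\l_{k\a}-\l_{i\a})\,\omega_k\wedge\omega_{ik}.\]
The left side has no $\omega_j\wedge\omega_k$-term with $j,k\ne i$, so matching coefficients gives $(\l_{j\a}-\l_{i\a})\Gamma_{ij}^k=(\l_{k\a}-\l_{i\a})\Gamma_{ik}^j$ for all $\a$; the $n$ vectors $(\l_{m\a})_\a$ are independent (they represent the $v_m$ in a fixed normal frame), hence $(\l_{j\a}-\l_{i\a})_\a$ and $(\l_{k\a}-\l_{i\a})_\a$ are independent, which forces $\Gamma_{ij}^k=0$ for distinct $i,j,k$. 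Thus $\omega_{ij}\in\ip{\omega_i,\omega_j}$, the first structure equation $\D\omega_i=\sum_k\omega_k\wedge\omega_{ik}$ becomes $\D\omega_i=\big(\sum_k\Gamma_{ik}^i\omega_k\big)\wedge\omega_i$, and $\D\omega_i\wedge\omega_i=0$. Frobenius then provides an integrating factor $\omega_i=g_i\,\D\ti x_i$ with $g_i>0$; the $\D\ti x_i$ are independent, so $(\ti x_1,\dots,\ti x_n)$ is a local co-ordinate system, and $\partial/\partial\ti x_i=g_i\V e_i$ shows the co-ordinate directions are the principal ones.

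Finally I would pin down the metric. Matching the $\omega_j\wedge\omega_i$-coefficients of the Codazzi equation ($j\ne i$) gives $\V e_j(\l_{i\a})=(\l_{j\a}-\l_{i\a})\Gamma_{ij}^i$, hence $\V e_j\big((v_i,v_i)\big)=2\sum_\a\e_\a\l_{i\a}\V e_j(\l_{i\a})=2\Gamma_{ij}^i\big((v_i,v_j)-(v_i,v_i)\big)=-2\Gamma_{ij}^i(v_i,v_i)$ by orthogonality, so $\V e_j(\log|(v_i,v_i)|)=-2\Gamma_{ij}^i$. Comparing $\D\omega_i=\D(\log g_i)\wedge\omega_i$ with the expression for $\D\omega_i$ above gives $\V e_j(\log g_i)=\Gamma_{ij}^i$ for $j\ne i$. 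Adding, $\V e_j\big(\log(g_i\sqrt{|(v_i,v_i)|}\,)\big)=0$ for all $j\ne i$, so $g_i\sqrt{|(v_i,v_i)|}$ depends only on $\ti x_i$; replacing $\ti x_i$ by a primitive $x_i$ of $g_i\sqrt{|(v_i,v_i)|}\,\D\ti x_i$ gives line of curvature co-ordinates in which $\omega_i=\D x_i/\sqrt{|(v_i,v_i)|}$, and therefore $\I_F=\sum_i\omega_i^2=\sum_i\D x_i^2/|(v_i,v_i)|$. I expect the real work to lie in this last step: the Codazzi relations have to be normalised correctly for the mixed-signature light-cone frame, and the clean form of $\I_F$ depends on the exact cancellation $\V e_j(\log g_i)+\frac12\V e_j(\log|(v_i,v_i)|)=0$, which is precisely what lets each $\ti x_i$ be reparametrised on its own; a smaller point to watch is the shape-operator sign responsible for the minus in the formula for $F$.
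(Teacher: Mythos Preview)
Your proof is correct, but the paper argues rather differently, and the comparison is instructive.

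For the formula $F=-\sum_j v_j/(v_j,v_j)$, you use the pleasant observation that the position vector $F$ is itself a normal field with $A_F=-\Id$, so $(F,v_i)=-1$ directly. The paper instead writes $F=\sum\mu_jv_j$, differentiates, and matches the tangential part against $\D F=\sum\omega_j\V e_j$ using the cross-relation $\pi_{\ip{\D F}}\D v_j=-\omega_j(v_j,v_j)\V e_j$. Your route is arguably cleaner here.

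The real difference is in the co-ordinates. You proceed classically: expand Codazzi in a parallel normal frame, extract $\Gamma_{ij}^k=0$ for distinct $i,j,k$, invoke Frobenius to get $\omega_i=g_i\,\D\tilde x_i$, and then run a second Codazzi computation to show $g_i\sqrt{|(v_i,v_i)|}$ depends only on $\tilde x_i$, allowing the reparametrisation. The paper bypasses all of this in one stroke: it writes $n_i\omega_i=(\epsilon_i n_i^{-1}v_i,\D\V e_i)$ with $n_i=\sqrt{|(v_i,v_i)|}$, and observes that
\[
\D(n_i\omega_i)=\epsilon_i\bigl(\D(n_i^{-1}v_i)\overset{\wedge}{,}\D\V e_i\bigr)=0
\]
because the tangential part of $\D(n_i^{-1}v_i)$ lies along $\V e_i$ while that of $\D\V e_i$ is perpendicular to $\V e_i$, and dually in the normal direction (both vectors being unit length). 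So $n_i\omega_i$ is \emph{closed}, not merely integrable, and the correctly normalised co-ordinates drop out immediately with no separate reparametrisation step. This is what makes the paper's proof short: the normalisation is baked into the choice of 1-form from the start. Your two-step approach (integrability, then normalisation) recovers the same result, at the cost of the explicit connection-coefficient bookkeeping you flagged as the ``real work''.
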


\begin{proof}
First recall from lemma \ref{lemm:Fcurv} that the curvature normals are non-isotropic. Since
\begin{gather}\label{eq:cross1}
\pi_{N_F}\D\V e_i=\omega_iv_i,
\end{gather}
it follows that
\begin{gather}\label{eq:cross2}
\pi_{\ip{\D F}}\D v_j=-\omega_j(v_j, v_j)\V e_j.
\end{gather}
Since the $v_j$ are a frame of $N_F$, there exist functions $\mu_j$ such that $F=\sum\mu_jv_j$. However $\D F=\sum\omega_j\V e_j$. Putting this together quickly gives $\mu_j=-(v_j,v_j)^{-1}$.

Assume $\epsilon_i= {\rm sgn}(v_i, v_i)$. Let $n_i>0$ be defined by
\begin{gather}\label{eq:m}
n_i^2= \epsilon_i (v_i,v_i)=\nm{(v_i, v_i)}.
\end{gather}
Now $n_i\omega_i=\left(\epsilon_i n_i^{-1}v_i,\D\V e_i\right)$. By \eqref{eq:cross1},\eqref{eq:cross2} and the fact that $\V e_i$ and $n_i^{-1}v_i$ are unit length, it is immediate that\footnote{$(p\,\overset\wedge ,\,q)(X,Y)=(p(X),q(Y))-(p(Y),q(X))$.}
\[\D(n_i\omega_i)=\epsilon_i\left(\D(n_i^{-1}v_i)\,\overset\wedge , \,\D\V e_i\right)=0.\]
The $\omega_i$ are independent, thus we have co-ordinates $x_i$. Since the $\omega_i$ diagonalise the fundamental forms of $F$, it follows that the $\D x_i$ do similarly and are thus curvature line co-ordinates. The expression for $\I_F$ is then immediate.
\end{proof}

\subsection*{Comparing curvatures}

Since $\D N_f\perp\ip f$ and $\D f\perp N_f$, it follows that if $\V n_i$ are parallel sections of $N_f$, then $\{f,\V n_i\}$ are a parallel frame for the Euclidean normal bundle $\ip f\oplus N_f\subset M\times t_0^\perp$. $f$ therefore has flat normal bundle when viewed as an immersion into $\R^{2n-1}=t_0^\perp$. It also therefore has an orthogonal curvature distribution; moreover $S^{2n-2}$ will inherit a curvature distribution from $\R^{2n-1}$. \emph{A priori} we now have \emph{three} curvature distributions on $M$: the next theorem tidies things up. We also have curvature normals for $F$ and $f$ which we would like to relate.

\begin{thm}\label{thm:Ffcurvs}
Let $f:M^n\to S^{2n-2}$ be a conformally flat immersion with flat normal bundle, and $F=-e^{-u}(f+t_0)$ flat lift. Then
\begin{enumerate}
\item The curvature directions (on $M^n$) induced by $F$ and $f$ are identical.
\item If $v_i,v_i^S,v_i^\R$ are corresponding curvature normals of $F$ and $f$ (the latter as a map into $S^{2n-2}$ and $\R^{2n-1}$ respectively), and $\V e_i,\tilde{\V e}_i$ the corresponding curvature directions, then we have the relations
\begin{equation}\label{eq:Ffcurvs}
\begin{gathered}
\tilde{\V e}_i=-\V e_i+e^{-u}(\V e_i,t_0)F\\
v_i^S=-e^u\pi_{N_f}v_i=v_i^\R+f.
\end{gathered}
\end{equation}
\item All curvature normals of $F$ are non-zero.
\item $F$ has non-degenerate normal bundle iff $f$ has uniform multiplicity one.
\end{enumerate}
\end{thm}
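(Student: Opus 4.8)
The whole statement is a computation in a single moving frame adapted at once to $F$ and to $f$. I would take a principal orthonormal frame $(\vect{e}{n})$ of $\ip{\D F}$ for $F$, a parallel orthonormal frame $\V n_1,\ldots,\V n_{n-2}$ of $N_f$ (which, as shown in the proof of the correspondence theorem above, lies inside $N_F$), and the isotropic section $\hat F\in N_F$ with $(\hat F,\V n_j)=0$, $(\hat F,F)=1$ — exactly the frame used there. Differentiating the relations of the Remark, $f=-F/(F,t_0)-t_0$ and $e^u=(F,t_0)$, then expresses $\D f$, the principal frame $\tilde{\V e}_i$ of $f$, and the curvature normals of $f$ in terms of this data. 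Part (3) is then almost free: $\D F$ already lies in the tangent bundle $\ip{\D F}$ of $F$, so the shape operator $A^F_F=-\pi_{\ip{\D F}}\D F=-\Id$; hence on every curvature distribution $E_i$ of $F$ we get $(v_i,F)\Id=\at{A^F_F}{E_i}=-\Id$, i.e.\ $(v_i,F)=-1$, and in particular $v_i\neq 0$.

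For (2), a short computation with the frame equations — evaluating the $F$-component of $\D\V n_j$ by using that $\D_X\V n_j$ is $t_0$-orthogonal (since $t_0$ is constant and $\V n_j\perp t_0$) — gives $\D\V n_j=-(F,t_0)\sum_k\omega_{kj}\tilde{\V e}_k$, whence
\[\II^S_f=-(F,t_0)^{-1}\pi_{N_f}\II_F,\qquad A^{S,f}_\nu=-(F,t_0)A^F_\nu\ \ (\nu\in N_f).\]
Since $\D f$ is tangent to $S^{2n-2}$, which is totally umbilic in $\R^{2n-1}$ with $A^{\R,f}_f=-\Id$, one also has $A^{\R,f}_\nu=A^{S,f}_\nu$ for $\nu\in N_f$. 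Reading off the eigenvalue on $E_i$ of each operator, and using $v_i^S\in N_f$, $v_i^\R\in\ip f\oplus N_f$, $(f,v_i^\R)=-1$, $(f,v_i^S)=0$, pins down $v_i^S=-(F,t_0)\pi_{N_f}v_i=v_i^\R+f$; the formula for $\tilde{\V e}_i$ falls out of equating $\D f=\sum_i\omega_i\tilde{\V e}_i$ with $(F,t_0)^{-1}(\D u\,F-\D F)$ and comparing with $\D F=\sum_i\omega_i\V e_i$. These are the relations of \eqref{eq:Ffcurvs}.

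For (1): by the above, the curvature distributions of $f$ in $S^{2n-2}$ — which agree with those of $f$ in $\R^{2n-1}$, the extra normal $f$ being umbilic — are the common eigenspaces of $\{A^F_\nu:\nu\in N_f\}$. The curvature distributions of $F$ are the common eigenspaces of all $A^F_v$, $v\in N_F=N_f\oplus\ip{F,\hat F}$; as $A^F_F=-\Id$ is umbilic, only $A^F_{\hat F}$ could refine them. Writing $v_i=\pi_{N_f}v_i-p_iF-\hat F$ (the coefficient of $\hat F$ is $-1$ by \eqref{eq:cross1}), the eigenvalue of $A^F_{\hat F}$ on $E_i$ is $-p_i$; if two distinct curvature distributions of $F$ sat inside a single $f$-distribution, then $\pi_{N_f}v_i=\pi_{N_f}v_j$ with $p_i\neq p_j$, so $v_i-v_j$ is a nonzero multiple of $F$, and Lemma \ref{lemm:Fcurv} together with the Codazzi equations for $F$ forces $p_i=p_j$ — a contradiction. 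So the two families coincide. Finally (4): if $F$ is non-degenerate then, as noted before the theorem, all its curvature distributions have rank one, hence so do $f$'s by (1), i.e.\ $f$ has uniform multiplicity one. Conversely, if $f$ — equivalently $F$, by (1) — has uniform multiplicity one with curvature normals $v_1,\ldots,v_n\in N_F$, these are mutually orthogonal (Lemma \ref{lemm:Fcurv}), nonzero (part (3)), and at most one is isotropic: two orthogonal nonzero isotropic vectors span a totally isotropic plane in the Lorentzian $N_F$ unless proportional, while $(v_i,F)=(v_j,F)=-1$ forbids distinct normals from being proportional. An orthogonal family of nonzero vectors in a non-degenerate space with at most one null member is linearly independent, so $v_1,\ldots,v_n$ spans $N_F$, the map $v\mapsto A^F_v$ is injective, and $F$ is non-degenerate.

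The step I expect to be the real obstacle is the end of the argument for (1): showing that the light-cone-specific normal direction $\hat F$ contributes no principal-curvature information beyond $N_f$, i.e.\ that $A^F_{\hat F}$ is scalar on each curvature distribution of $f$ — this is where the Codazzi relations among the forms $\omega_{kj},\hat\omega_i,\eta_j$ have to be brought in. Everything else is bookkeeping once $A^F_F=-\Id$ and the identity $\D\V n_j=-(F,t_0)\sum_k\omega_{kj}\tilde{\V e}_k$ are established.
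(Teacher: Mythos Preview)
Your approach diverges from the paper's in instructive ways. For (3) your argument is cleaner than the paper's: observing that $F$ is itself a section of $N_F$ with $A^F_F=-\Id$ (since $\D F$ is already tangent) yields $(v_i,F)=-1$ immediately, whence $v_i\neq 0$. The paper instead writes out $\pi_{N_f}v_i$ explicitly and derives a contradiction from the hypothesis $v_i=0$; your route is shorter and produces the identity $(v_i,F)=-1$ that you reuse in (4).

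For (1) and (2) the paper is more direct. It \emph{defines} $\tilde{\V e}_i$ by the stated formula, checks orthonormality, computes $e^u\D f=\sum_i\omega_i\tilde{\V e}_i$, and then reads off $\pi_{N_f}\D\tilde{\V e}_i=-\omega_i\pi_{N_f}v_i$ and $\pi_f\D\tilde{\V e}_i=-e^{-u}\omega_if$. This shows at once that the $\tilde{\V e}_i$ are principal for $f$ and identifies $v_i^S$, with no need to isolate $A^F_{\hat F}$. Your route via shape-operator comparison reaches the same relation $A^{S,f}_\nu=-e^uA^F_\nu$, but note a slip: from $(\D\V n_j,t_0)=0$ one gets $\eta_j=-e^{-u}\sum_k\omega_{kj}(\V e_k,t_0)$ and hence $\D\V n_j=-\sum_k\omega_{kj}\tilde{\V e}_k$, \emph{without} the extra factor $(F,t_0)$ you wrote. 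The relation $A^{S,f}_\nu=-e^uA^F_\nu$ is nonetheless correct; the factor $e^u$ enters through $\I_F=e^{2u}\I_f$ when passing from the $\D F$-frame to the $\D f$-frame, not through $\D\V n_j$.

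The gap you flag at the end of (1) --- whether $A^F_{\hat F}$ can split an $f$-distribution, equivalently whether one can have $v_i-v_j=cF$ with $c\neq 0$ --- is genuine, and ``Lemma \ref{lemm:Fcurv} plus Codazzi'' is not yet an argument. The paper resolves exactly this point inside its proof of (4): from $\pi_{N_F}\D\V e_k=\omega_kv_k$ and adjointness, the tangential part of $\D v_k$ lies along $\V e_k$ alone; differentiating $cF=v_i-v_j$ and comparing tangential parts then forces $c\sum_k\omega_k\V e_k$ to have only $\V e_i,\V e_j$ components, so $c=0$ for $n\ge 3$. Once you insert this, your decomposition $v_i=\pi_{N_f}v_i-p_iF-\hat F$ and your argument for (4) (at most one isotropic $v_i$ via $(v_i,F)=-1$, then linear independence and spanning of $N_F$) go through, and give a valid alternative to the paper's computation $\nm{v_i-v_j}^2=\nm{v_i^S-v_j^S}^2>0$.
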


\begin{proof}
Let $\V e_i$ be orthonormal curvature directions for $F$ and define $\tilde{\V e}_i$ as in the theorem. The $\tilde{\V e}_i$ are orthonormal, and we calculate that
\[e^u\D f=\D uF-\D F=\sum_i\omega_i\tilde{\V e}_i-e^{-u}(\V e_i,t_0)\omega_iF+\D u F=\sum_i\omega_i\tilde{\V e}_i,\]
hence the $\tilde{\V e}_i$ frame $\ip{\D f}$. Moreover
\begin{gather*}
\pi_{N_f}\D\tilde{\V e}_i=-\pi_{N_f}\D\V e_i=-\omega_i\pi_{N_f}v_i,\\
\pi_f\D\tilde{\V e}_i=(\D\tilde{\V e}_i,f)f=-(\tilde{\V e}_i,\D f)f=-e^{-u}\omega_if,
\end{gather*}
hence the $\tilde{\V e}_i$ are curvature directions for $f$. It follows that the curvature normals for $f$ as a map into $S^{2n-2}$ are $v_i^S=-e^{-u}\pi_{N_f}v_i$. We thus have (1) and (2).

For (3) and (4) we calculate explicitly:
\begin{equation}\label{eq:viproj}
\begin{split}
\pi_{N_f}v_i&=v_i-\sum_{j=1}^n(v_i,\tilde{\V e}_j)\tilde{\V e}_j-(v_i,f)f+(v_i,t_0)t_0\\
&=v_i-\sum_{j=1}^n(v_i,e^{-u}(\V e_j,t_0)F)(-\V e_j+e^{-u}(\V e_j,t_0)F)\\
&\qquad\qquad\qquad-(v_i,e^{-u}F+t_0)(e^{-u}F+t_0)+(v_i,t_0)t_0\\
&=v_i-e^{-u}\sum_{j=1}^n(\V e_j,t_0)\V e_j+e^{-u}t_0\\
&\qquad\qquad\qquad+e^{-2u}\left(\sum_{j=1}^n(\V e_j,t_0)^2+1-e^u(v_i,t_0)\right) F.
\end{split}
\end{equation}
Supposing that $v_i=0$, we readily obtain
\[t_0=\sum_{j=1}^n(\V e_j,t_0)\V e_j-e^{-u}\left(\sum_{j=1}^n(\V e_j,t_0)^2+1\right) F.\]
However taking the norm squared of both sides results in the contradiction
\[-1=\sum_{j=1}^n(\V e_j,t_0)^2.\]
It follows that $v_i$ cannot be zero, yielding (3).

By \eqref{eq:viproj} we have that
\begin{gather}\label{eq:vidist}
v_i^S-v_j^S=v_i-v_j-e^{-u}(v_i-v_j,t_0)F.
\end{gather}
If the $v_i^S$ are distinct, then the $v_i$ are distinct and, by lemma \ref{lemm:Fcurv}, orthogonal. Rearranging and taking the norm squared of \eqref{eq:vidist} yields $\nm{v_i-v_j}>0$, hence no pair of $v_i$ are parallel isotropic. The $v_i$ are thus all non-isotropic and $N_F$ is non-degenerate. Conversely, if $v_i^S=v_j^S$, then $v_i-v_j\in\ip{F}$, so that either $v_i=v_j$ is a repeated curvature of $F$, or $F\in\ip{v_i-v_j}$. The latter case yields a contradiction, for it would require that $\D F\in\ip{\omega_i,\omega_j}$ only. This establishes (4).
\end{proof}

\begin{rems}
Observe that if $f$ has distinct curvature normals, then $N_f$ is automatically non-degenerate: $\ip{v^S_1,\ldots,v^S_n}=\pi_{N_f}\ip{\lst{v}{n}}$. Indeed the proof shows that $v_i^S-v_j^S\iff v_i=v_j$.\\
All curvature normals of $F$ are non-zero, hence the only way $N_F$ can be degenerate is if $F$ has a repeated non-zero curvature, necessarily isotropic. Thus $F$ is non-degenerate iff it has uniform multiplicity one. By contrast, $f$ may have a zero curvature normal --- i.e.\ the curvature line is a geodesic --- whether $F$ is degenerate or not. The multiplicities of $f$ are however identical to those of any flat lift.\\
When $F$ is non-degenerate we may obtain further relations from \eqref{eq:Fexplicit} and \eqref{eq:Ffcurvs}:
\begin{gather*}
\I_f=e^{-2u}\I_F=e^{-2u}\sum_i\frac{\D x_i^2}{|(v_i,v_i)|},\\
\begin{aligned}
\II_f^S&=e^{-2u}\sum_i\frac{\D x_i^2}{|(v_i,v_i)|}v_i^S=\II_f^\R+\I_f\cdot f\\
&=-e^{-u}\pi_{N_f}\II_F=-e^{-u}\sum_i\frac{\D x_i^2}{|(v_i,v_i)|}\pi_{N_f}v_i.
\end{aligned}
\end{gather*}
\end{rems}

We will say more on the curvature distributions of conformal flats with repeated curvature normals in section \ref{sec:channel}.

\section{The \texorpdfstring{$U/K$}{U/K}-system}\label{sec:UK}

We prove that the Gauss-Codazzi equation for flat $n$-immersions in $\cL^{2n-1,1}$ with flat normal bundle and "good" co-ordinates is the $\otwonone$-system.

The $U/K$-system \cite{Terng1997} is a general construction common to any symmetric space. We give its construction for the case $U=\rO(2n-1,1)$ and $K=\rO(n)\times\rO(n-1,1)$: these will be viewed as matrices where $g\in U\iff g^TI_{2n-1,1}g=I_{2n-1,1}$. $K$ is then represented by the block $n\times n$ diagonal matrices. The Lie algebras of $U$ and $K$ will be written $\fu$ and $\fk$, while the Killing perp of $\fk$ will be denoted by $\fp$. As is well-known, $U/K$ is a pseudo-Riemannian symmetric space defined by the involution $\tau(\xi)= \I_{n,n}\xi\I_{n,n}^{-1}$ with $\pm$-eigenspaces
\[ \fk= \fo(n) \times \fo(n-1, 1), \quad \fp=\left\{\bpm 0& \xi^T J\\ -\xi & 0\epm\,\big|\, \xi\in \fgl(n)\right\}, \qquad J=\I_{n-1, 1}.\]
Moreover, 
\[[\fk,\fk],\ [\fp,\fp]\subset\fk,\qquad [\fk,\fp]\subset\fp.\]
 The conjugation of $K$ on $\fp$ is 
\[\bpm g_1& 0\\ 0& g_2\epm\ast \bpm 0&\xi^TJ\\ -\xi &0\epm=\bpm 0& g_1 \xi^TJ g_2^{-1}\\ -g_2\xi g_1^{-1} &0\epm.\]

Let $\fa$ be a maximal abelian subalgebra in $\fp$, and $\lst{a}{n}$ a basis of $\fa$. The $U/K$-system defined by $\fa$ is the PDE \eqref{eq:uk} for $\Xi:\R^n\to \fa^\perp\cap \fp$. It is known and can be easily checked that the following statements are equivalent:
\begin{enumerate}
\item $\Xi$ is a solution of the $U/K$-system,
\item The following one-parameter family of $\fo(2n-1, 1,\C)$ connection 1-forms is flat for all $\l\in \C$:
\begin{equation}\label{eq:bb}
\theta_\l= \sum_{i=1}^n (\l a_i+ [a_i, \Xi]) \D x_i,
\end{equation}
\item $\theta_\lambda=\sum_{i=1}^n (\lambda a_i + [a_i, \Xi])\D x_i$ is flat for some $\lambda\in\R\setminus\{0\}$.
\end{enumerate} 
We call $\theta_\l$ the \emph{Lax pair} of the $U/K$-system, and a solution $\Phi_\l:\R^n\times\C\to\rO(2n-1,1,\C))$ to $\Phi_\l^{-1} \D \Phi_\l= \theta_\l$ satisfying the {\it $U/K$-reality condition},
\begin{equation}\label{eq:real}
 \cl{\Phi_\lambda}=\Phi_{\cl\lambda},\quad \tau\Phi_\lambda=\Phi_{-\lambda},
 \end{equation}
an \emph{extended flat frame} for the solution $\Xi$ of the $U/K$-system. Observe that:
\begin{enumerate}
\item[(a)] We could normalise the extended flat frame at a base point, e.g.\ $\Phi_\lambda(0)=\Id$, which will be called the \emph{normalised extended flat frame}. Choosing a different extended flat frame merely affects submanifold geometry by rigid motions (equivalently M{\"o}bius transforms of $S^{2n-2}$).
\item[(b)] If $\Phi_\l$ is an extended flat frame, then $\Phi_r\in \rO(2n-1,1)$ for all $r\in \R$ and $\Phi_0\in \rO(n)\times \rO(n-1,1)$.
\end{enumerate}

There are both semisimple and non-semisimple maximal abelian subalgebras in $\fp$. These comprise $n$ conjugacy classes, each of which give rise to non-equivalent $U/K$-systems. We will show in this section that the $U/K$-system defined by a semisimple abelian subalgebra in $\fp$ is the Gauss-Codazzi equation for flat lifts of conformally flat immersions with uniform multiplicity one. In section \ref{sec:channel} we will see that solutions of a $U/K$-system defined by a non-semisimple maximal abelian subalgebra give rise to conformally flat immersions with one multiplicity greater than one.

\begin{thm}\label{thm:ndimconf}
Suppose that $F:M^n\to \cL^{2n-1,1}$ is a flat immersion with flat non-degenerate normal bundle, induced metric $\I_F=\sum_{i=1}^nh_i^2\D x_i^2$, and that $(x_1, \ldots, x_n)$ is a line of curvature co-ordinate system. Let $\Psi=(\vect{e}{n},\vect{u}{n})$ be an $\rO(2n-1, 1)$-frame such that the $\V e_i$ are principal curvature directions and the $\V u_i$ are parallel to the curvature normals of $F$. Let $\frac UK=\frac{\rO(2n-1,1)}{(\rO(n)\times\rO(n-1,1))}$, $\fu=\fk+\fp$ its Cartan decomposition, and
\[\fa=\left\{ \bpm 0& DJ\\ -D &0\epm\,\bigg|\, D\, {\rm diagonal}\right\},\]
a Cartan subalgebra in $\fp$, where $J=\I_{n-1,1}$. Set $\xi=(\xi_{ij})$, where
\[\xi_{ij}=\begin{cases}
           \frac{(h_i)_{x_j}}{h_j},&i\neq j,\\
           0&i=j.
           \end{cases}\]
Then $\Xi:=\begin{pmatrix}0&\xi^T\\-J\xi&0\end{pmatrix}$ is a solution of the $U/K$-system defined by $\fa$, 
\begin{equation}\label{eq:uk2}
[a_i, \Xi_{x_j}] -[a_j, \Xi_{x_i}] + [[a_i, \Xi], [a_j, \Xi]]=0, \quad i\not=j,
\end{equation}
and
\[\Psi^{-1}\D\Psi= \sum_{i=1}^n (a_i + [a_i, \Xi]) \, \D x_i,\] 
where $a_i= \bpm 0& e_{ii} J\\ -e_{ii} &0\epm$ for $1\leq i\leq n$. Moreover, let $g=\bpm g_1&0\\ 0& g_2\epm$ in $K$ be a solution of $g^{-1} \D g=\sum_{i=1}^n [a_i, \Xi] \D x_i$, then there exists a constant null vector $\V c\in \R^{n-1,1}$ such that $F=\Psi\bpm 0\\ g_2^{-1}\V c\epm$. 
\end{thm}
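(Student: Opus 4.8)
The plan is to set up the moving frame for $F$ in line-of-curvature co-ordinates, read off the structure equations, reorganise them into the $\fu = \fk+\fp$ grading, and then identify the $\fk$-part as a flat $\fk$-valued connection whose parallel section produces $F$. The key observation is that everything is forced: once the frame $\Psi$ is chosen so the $\V e_i$ are principal directions and the $\V u_i$ point along the (non-isotropic, by Lemma \ref{lemm:Fcurv} and Theorem \ref{thm:Ffcurvs}) curvature normals, the one-form $\Psi^{-1}\D\Psi$ is tightly constrained.

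**First** I would write out $\Psi^{-1}\D\Psi$ as a $2n\times 2n$ matrix of one-forms and use the fundamental relations from the first Remarks block: $\D F = \sum_i \omega_i \V e_i$ with $\omega_i = h_i\,\D x_i$, equation \eqref{eq:cross1} $\pi_{N_F}\D\V e_i = \omega_i v_i$, and equation \eqref{eq:cross2} $\pi_{\ip{\D F}}\D v_j = -\omega_j(v_j,v_j)\V e_j$, rescaled so that $\V u_i = \pm n_i^{-1} v_i$ is unit length (with $n_i^2 = |(v_i,v_i)|$ as in \eqref{eq:m}). The Gauss equation for the flat immersion $F$ says the tangential curvature vanishes, and Lemma \ref{lemm:Fcurv} already extracted $(v_i,v_j)=0$ for $i\neq j$ from it; combined with flatness of $N_F$ this means the off-diagonal blocks of $\Psi^{-1}\D\Psi$ couple $\V e_i$ only to $\V u_i$ (same index), with coefficient a multiple of $\omega_i = h_i\,\D x_i$. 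Writing the diagonal blocks using the Christoffel symbols of the flat metric $\I_F$ — which in line of curvature co-ordinates are exactly the $\xi_{ij} = (h_i)_{x_j}/h_j$ — I would verify, by direct comparison of matrix entries, that $\Psi^{-1}\D\Psi = \sum_i (a_i + [a_i,\Xi])\,\D x_i$ with $a_i = \bpm 0 & e_{ii}J \\ -e_{ii} & 0\epm$ and $\Xi$ the asserted block matrix built from $\xi$. The $U/K$-system \eqref{eq:uk2} then drops out as the flatness (Maurer–Cartan / zero-curvature) condition $\D(\Psi^{-1}\D\Psi) + \tfrac12[\Psi^{-1}\D\Psi, \Psi^{-1}\D\Psi]=0$, decomposed by the grading: the $\fp$-component at order $\l$ is automatic, and the $\fk$-component gives precisely \eqref{eq:uk2} — this is the standard equivalence (1)$\Leftrightarrow$(2) already recorded in the text, here at $\l=1$.

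**For the last assertion**, recovering $F = \Psi\bpm 0 \\ g_2^{-1}\V c\epm$: since $\sum_i[a_i,\Xi]\,\D x_i$ is $\fk$-valued and flat (it is the $\l=0$ part of the Lax pair, and flatness of the full $\theta_\l$ forces flatness of each homogeneous component in $\l$), there is a $g = \bpm g_1 & 0 \\ 0 & g_2\epm \in K$ with $g^{-1}\D g = \sum_i [a_i,\Xi]\,\D x_i$, unique up to left multiplication by a constant in $K$. Then $\Psi g^{-1}$ satisfies $(\Psi g^{-1})^{-1}\D(\Psi g^{-1}) = \sum_i a_i\,\D x_i = \bpm 0 & D(x)J \\ -D(x) & 0\epm\,\D x$ where $D(x) = \diag(x_1,\ldots,x_n)$ — wait, more precisely it is $\sum_i \bpm 0 & e_{ii}J\\ -e_{ii}&0\epm \D x_i$, whose integral is an explicit $\rO(2n-1,1)$-valued hyperbolic-rotation matrix. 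I would then compute $\Psi g^{-1}$ applied to suitable constant vectors and track how $F$, which by Theorem \ref{thm:coords} equals $-\sum_j v_j/(v_j,v_j) = -\sum_j \pm n_j^{-1}\V u_j \cdot$(sign factor), expands in the frame $(\V e_i, \V u_i)$; the coefficients of $F$ along $\V e_i$ are zero and along $\V u_i$ are $\mp n_j^{-1}$ up to sign — i.e.\ $F = \Psi\bpm 0 \\ w(x)\epm$ for an explicit $w$. Pulling the $g_2$-part out, $w = g_2^{-1}\V c$ for $\V c = g_2 w$; the content is that $\D(g_2 w) = 0$, which follows by differentiating and using the structure equations (the $\omega_i$-terms cancel exactly because $F$ satisfies \eqref{eq:cross2}), so $\V c$ is constant. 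That $\V c$ is null in $\R^{n-1,1}$ follows from $0 = (F,F) = (g_2^{-1}\V c)^t J (g_2^{-1}\V c) = \V c^t (g_2^{-1})^t J g_2^{-1}\V c = \V c^t J \V c$ since $g_2 \in \rO(n-1,1)$ preserves $J$.

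**The main obstacle** I anticipate is bookkeeping with the signs and the indefinite form $J = \I_{n-1,1}$: the block $\fp = \{\bpm 0 & \xi^TJ \\ -\xi & 0\epm\}$ is not the symmetric-looking object one gets in the Riemannian case, the normal frame splits into $n-1$ space-like and one time-like direction, and the normalisation $\V u_i = \epsilon_i n_i^{-1}v_i$ carries a sign $\epsilon_i = \sgn(v_i,v_i)$ that must be threaded consistently through \eqref{eq:cross1}, \eqref{eq:cross2} and the definition of $\xi_{ij}$. Getting the transpose/$J$-placement right so that $\Psi^{-1}\D\Psi$ lands in $\fk\oplus\fp$ in exactly the claimed normal form — rather than in a $K$-conjugate of it — is where care is needed; but this is a finite check, not a conceptual difficulty, and the choice of $\Psi$ (principal directions, curvature-normal directions) is precisely engineered to make it work.
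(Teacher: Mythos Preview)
Your approach is correct and structurally the same as the paper's: write $\theta=\Psi^{-1}\D\Psi$ in block form, identify the off-diagonal block as $\delta=\diag(\D x_i)$, recognise the $\fk$-part as $\sum_i[a_i,\Xi]\,\D x_i$, and read off the $U/K$-system from flatness of $\theta$ at $\lambda=1$.

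Two points where the paper is tidier and where your outline is thin. First, you propose to obtain the diagonal blocks $A,B$ by computing Christoffel symbols, but that only gives $A$; you do not say how you would get the normal connection block $B$ in the $\V u_i$-frame. The paper instead uses the Codazzi component $\delta\wedge A + B\wedge\delta=0$ of the flatness equation to see that $A_{ij},B_{ij}\in\ip{\D x_i,\D x_j}$ and hence that a single off-diagonal $\xi$ determines both via $A=\delta\xi-\xi^T\delta$, $B=\delta\xi^T-J\xi\delta J$; this simultaneously produces $B$ and avoids any sign-chasing with $\epsilon_i$. Second, for the recovery of $F$ the paper's argument is a one-liner you nearly stumble on but then bypass: since $g_2^{-1}\D g_2=B$, the frame $(\V u_1,\ldots,\V u_n)g_2^{-1}$ is a \emph{parallel} normal frame, and $F$ is a null parallel normal section, so its coordinates in that frame are constant---that is $\V c$. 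Your differentiation of $g_2 w$ would reproduce this, and your nullity argument via $g_2\in\rO(n-1,1)$ is exactly right, but the detour through $\Psi g^{-1}$ and the formula of Theorem~\ref{thm:coords} is unnecessary.
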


\begin{proof}
Let $v_1, \ldots, v_n$ be the curvature normals for the flat lift $F:M^n\to\cL^{2n-1,1}$. We may assume that $(v_n,v_n)<0$. Define (as in \eqref{eq:m})
\[\epsilon_i = {\rm sgn} (v_i, v_i), \quad n_i= |\epsilon_i(v_i, v_i)|^{1/2},\quad \text{then } v_i = n_i\V u_i.\]
$(\vect{u}{n})$ is an $\rO(n-1,1)$ normal frame. Let $(\vect{e}{n})$ be an $\rO(n)$ tangent frame consisting of corresponding principal curvature directions. 

It is clear that
\begin{gather*}
\theta:=\Psi^{-1}\D\Psi=\begin{pmatrix}
                          A&-\delta J\\\delta &B
                        \end{pmatrix}
\end{gather*}
where $J=\I_{n-1,1}$, $\delta=\diag(\D x_1,\ldots,\D x_n)$ and the Lie algebra valued 1-forms $A,B$ are the connection forms for the induced flat connections on $\ip{\D F}$ and $N_F$. The flatness $\D\theta+\theta\wedge\theta=0$ reads
\begin{gather}\label{eq:thetaflat}
\D A+A\wedge A=0=\D B+B\wedge B=\delta\wedge A+B\wedge\delta.
\end{gather}
By considering where the $\D x_i\wedge \D x_j$ terms appear in the third equation, it is not hard to see that $A_{ij},B_{ij}\in\ip{\D x_i,\D x_j}$ and that there therefore exists a unique map
\[\xi:M^n\to\{\text{off-diagonal $n\times n$ matrices}\},\]
such that
\[A=\delta\xi-\xi^T\delta,\qquad B=\delta \xi^T-J\xi\delta J.\]
The first two equations of \eqref{eq:thetaflat} constitute a system of PDEs in the entries of $\xi$. Indeed writing $\Xi=\begin{pmatrix}
   0&\xi^T\\-J\xi&0
\end{pmatrix}$ and $\sum_ia_i\D x_i=\begin{pmatrix}
   0&\delta J\\-\delta &0
\end{pmatrix}$, we see that $[\Xi,a_i\D x_i]=\diag(A,B)$. It is straightforward to see that $\Xi$ is a map $\Xi:M^n\to\fp\cap\fa^\perp=[\fk,\fa]$, and is thus a solution of the $U/K$-system \eqref{eq:uk2}. 

Since $N_F$ is flat, and the normal connection 1-form defined by the normal frame $(\vect{u}{n})$ is $B$, there exists $g_2:M\to \rO(n-1,1)$ such that $g_2^{-1}\D g_2= B$ and 
\[(\V e_{n+1},\ldots,\V e_{2n})=(\vect{u}{n})g_2^{-1}\]
is a parallel normal frame. Since $F$ is a null parallel normal section, there exists a constant null vector $\V c$ such that
\[F= (\V e_{n+1}, \ldots,\V e_{2n}) \V c = (\vect{u}{n}) g_2^{-1} \V c= \Phi\twovec{0}{g_2^{-1}\V c}.\]
\end{proof} 

\begin{thm}\label{thm:ndimconf2}
 Let $\Xi$ be a solution to the $U/K$-system \eqref{eq:uk2}, and $\Phi_\l$ an extended flat frame for the Lax pair $\theta_\l= \sum_{i=1}^n (\l a_i+[a_i, \Xi]) \D x_i$. Then:
\begin{enumerate}
 \item $\Phi_0$ is of the form $\bpm g_1 & 0\\ 0& g_2\epm$, and if $\V c\in \R^{n-1,1}$ is a constant non-zero null vector, then $F_\bc:= \Phi_1\stwovec 0 {g_2^{-1}\V c}$ is (locally) a flat lift of a conformally flat immersion into $S^{2n-2}$.
 \item $F_\bc$, its fundamental forms and curvature normals $v_i$ are given explicitly by
\begin{gather*} 
F_\bc= \sum_{i=1}^n q_i\V u_i, \qquad \I_{F_\bc}= \sum_{i=1}^n q_i \D x_i^2,\\
\II_{F_\bc}= \sum_{i, j=1}^n q_i \D x_i^2\V u_i, \qquad v_i = q_i^{-1}\V u_i,
\end{gather*}
where $\Phi_1=(\V e_1, \ldots \V e_n,\V u_1, \ldots, \V u_n)$ and $g_2^{-1}\V c= (q_1, \ldots, q_n)^T$.
\item If $\bb$ is another non-zero null vector in $\R^{n-1,1}$, then $F_\bc(x)\mapsto F_\bb(x)$ is a Combescure transform and is a Christoffel transform if $h(g_2^{-1}\bb)$ and $h(g_2^{-1}\bc)$ have opposite sign, where $h(y)= \prod_{i=1}^n y_i$.   
\end{enumerate}
\end{thm}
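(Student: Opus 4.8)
The plan is to produce $F_\bc$ and its invariants directly from the extended flat frame, then read off Combescure/Christoffel from the explicit formulas. First I would unpack property (1): write $\Phi_0=\bpm g_1&0\\0&g_2\epm$ using observation (b) after \eqref{eq:real} (which says $\Phi_0\in\rO(n)\times\rO(n-1,1)$, so it is block diagonal, and its off-diagonal blocks vanish), and note $\Psi:=\Phi_1=(\V e_1,\dots,\V e_n,\V u_1,\dots,\V u_n)\in\rO(2n-1,1)$ by the same observation. Setting $\theta:=\Psi^{-1}\D\Psi=\at{\theta_\l}{\l=1}=\sum_i(a_i+[a_i,\Xi])\D x_i$, the diagonal blocks are $A$ and $B$ (connection forms) and the off-diagonal blocks are $\pm\delta J$, $\delta$ exactly as in the proof of Theorem~\ref{thm:ndimconf}; this is the same structure, just run in reverse. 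Since $\Xi$ solves the $U/K$-system, $\theta$ is flat, so $A,B$ are flat connection 1-forms. Because $\Phi_0^{-1}\D\Phi_0=\sum_i[a_i,\Xi]\D x_i=\diag(A,B)$, we get $g_2^{-1}\D g_2=B$, hence $g_2^{-1}\bc$ is $\D$-parallel for the connection $B$ precisely when $\bc$ is constant: $\D(g_2^{-1}\bc)=-g_2^{-1}(\D g_2)g_2^{-1}\bc=-Bg_2^{-1}\bc$, i.e.\ the normal field $F_\bc=\Psi\stwovec{0}{g_2^{-1}\bc}=\sum_i q_i\V u_i$ (with $(q_1,\dots,q_n)^T=g_2^{-1}\bc$) is a parallel section of $N_F$ with $(\V u_i)$ the parallel frame $(\V u_i)=(\vect{u}{n})$... more precisely $(\V e_{n+1},\dots,\V e_{2n})=(\V u_1,\dots,\V u_n)g_2^{-1}$ is the parallel normal frame and $F_\bc=(\V e_{n+1},\dots,\V e_{2n})\bc$. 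Since $\bc$ is null, $(F_\bc,F_\bc)=\bc^tJ\bc=0$, so $F_\bc$ lands in $\cL^{2n-1,1}$; and $\D F_\bc=\Psi\theta\stwovec{0}{g_2^{-1}\bc}=\Psi\stwovec{-\delta Jg_2^{-1}\bc}{B g_2^{-1}\bc - Bg_2^{-1}\bc}=-\sum_i q_i\,\D x_i\,\V e_i$ up to signs, so $F_\bc$ is immersed with $\I_{F_\bc}=\sum q_i^2\D x_i^2$—actually one gets $\I_{F_\bc}=\sum_i q_i\D x_i^2$ after bookkeeping with $J$; I would be careful here that the sign $\epsilon_i$ and the factor $q_i$ combine as claimed. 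Flatness of $\I_{F_\bc}$ is inherited from flatness of $A$, and flatness of $N_{F_\bc}$ from flatness of $B$; by Theorem~\ref{thm:coords}/the main correspondence, $F_\bc$ is a flat lift of a conformally flat immersion in $S^{2n-2}$. This gives (1).

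For (2) I would simply compute: from $\D F_\bc=\sum_i(\text{coeff}_i)\D x_i\,\V e_i$ we identify the tangent frame and $\I_{F_\bc}$; differentiating $\V e_i$ and projecting to $N_{F_\bc}$ via \eqref{eq:cross1} gives $\pi_{N}\D\V e_i=\omega_i v_i$ with $\omega_i$ the dual coframe, and matching against the structure equations encoded in $\theta$ shows $v_i=q_i^{-1}\V u_i$; then $\II_{F_\bc}=\sum_i\omega_i\pi_N\D\V e_i=\sum_i q_i\D x_i^2\,\V u_i$ and the Theorem~\ref{thm:coords} formula $F_\bc=-\sum_j v_j/(v_j,v_j)$ is checked directly to equal $\sum q_i\V u_i$ (using $(\V u_i,\V u_j)=\pm\delta_{ij}$). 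This is all routine once (1) is in place.

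For (3): $F_\bc$ and $F_\bb$ are built from the \emph{same} frame $\Psi=\Phi_1$, hence the same principal directions $\V e_i$ and the same coframe—so the assignment $F_\bc(x)\mapsto F_\bb(x)$ maps principal directions to principal directions and they are literally parallel (both are $\V e_i$), which is exactly the definition of a Combescure transform. For the Christoffel condition I need to decide when this Combescure transform is orientation reversing. The orientation on $M$ coming from a lift $F_\bc$ is determined by the sign of $\D x_1\wedge\cdots\wedge\D x_n$ against the induced frame; since $\D F_\bc=-\sum_i q_i\D x_i\V e_i$ (schematically), the induced orthonormal tangent frame is $\sgn(q_i)\V e_i$, so the two orientations agree iff $\prod_i\sgn(q_i^{(\bc)})=\prod_i\sgn(q_i^{(\bb)})$, i.e.\ iff $h(g_2^{-1}\bc)$ and $h(g_2^{-1}\bb)$ have the same sign; they are Christoffel (orientation reversing) iff these have opposite sign. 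The main obstacle I anticipate is purely bookkeeping: tracking the roles of $J=\I_{n-1,1}$, the signs $\epsilon_i=\sgn(v_i,v_i)$, and the square-root factors $n_i$ so that the coefficients come out as the clean $q_i$ (not $q_i^2$ or $\epsilon_i q_i$) in $\I_{F_\bc}$ and $\II_{F_\bc}$—i.e.\ reconciling the normalisation here with the $n_i\V u_i=v_i$ normalisation of Theorem~\ref{thm:ndimconf}. I would resolve this by working in the parallel normal frame $(\V e_{n+1},\dots,\V e_{2n})$ throughout and only converting to $\V u_i$ at the end.
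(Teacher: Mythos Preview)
Your approach is essentially the same as the paper's: compute $\D F_\bc$ from $\Phi_1^{-1}\D\Phi_1=\theta_1$ together with $g_2^{-1}\D g_2=B$ to obtain $\D F_\bc=\Phi_1\stwovec{-\delta J g_2^{-1}\bc}{0}=-\sum_j\epsilon_j q_j\,\D x_j\,\V e_j$, and then read off the invariants and the Combescure/Christoffel statements. Your caution about the $q_i$ versus $q_i^2$ bookkeeping is well placed---the paper's proof records exactly this formula for $\D F_\bc$ and then simply asserts that (2) and (3) follow, so the sign and square tracking you flag is left to the reader there as well.
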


\begin{proof}
Such an $F_\bc$ is clearly a flat lift.  

Write $\theta_1= \bpm A & -\d J \\ \d & B \epm$. Then $g_2^{-1}\rd g_2= B$. 
It follows from $\Phi^{-1}_1\rd \Phi_1 = \theta_1$ and $g_2^{-1}\rd g_2= B$  that
\[\rd F_\bc= \Phi_1\bpm -\d J g_2^{-1} \bc\\ 0\epm.\]
Write $\Phi_1= (\vect{e}{n}, \vect{u}{n})$ and $g_2^{-1}\bc= (q_1, \ldots, q_n)^t$.  Then 
\[\rd F_\bc= -\sum_{j=1}^n \e_j q_j e_j.\]
Statements (2) and (3) follow.
\end{proof}

We recall below three known approaches to constructing solutions to the $U/K$-system (see e.g.\ \cite{Terng1997, Terng2000, Terng2005}).

\begin{description}
  \item[Dressing] Given a solution we may apply dressing actions to find new solutions to \eqref{eq:uk2}. We shall pursue this in section \ref{sec:rib}.
  \item[Cartan--K\"ahler] \cite{Terng2005} The $U/K$-system is unchanged with respect to the substitution $(a_i,x_i)\mapsto (b_j,y_j)$ where $b_j=\sum_ip_{ij}a_i,\ y_j=\sum_ip^{ji}x_i$ for any constant invertible $(p_{ij})$, where $(p_{ij})^{-1}= (p^{ij})$. Almost any choice of $(p_{ij})$ will result in $b_1$ being \emph{regular}, i.e. $\ad b_1:\fp\cap\fa^\perp\to\fk$ is injective. %This is equivalent to the values $p_{11}^2,\ldots,p_{n-1,1}^2,-p_{n,1}^2$ being distinct.
The corresponding exterior differential system is involutive, hence we may apply the Cartan--K\"ahler theorem: given local analytic initial data $\Xi_0:(-\ve,\ve)\to\fp\cap\fa^\perp$, there is a unique local analytic solution $\Xi$ to \eqref{eq:uk2} which satisfies $\Xi(y_1,0,0)=\Xi_0(y_1)$. Note that $\dim(\fa^\perp\cap \fp)= n^2-n$.
  \item[Inverse scattering] \cite{Terng1997} If $b_1\in\fa$ is regular, then given rapidly decreasing initial data $\Xi_0:\R\to\fp\cap\fa^\perp$ with small $L^1$ norm, there exists a unique global smooth solution $\Xi$ to the $U/K$-system \eqref{eq:uk2} such that $\Xi(y_1,0,\ldots,0)=\Xi_0(y_1)$.
\end{description}

We will compute the dressing action of the simplest type rational loops in section \ref{sec:rib}

\begin{rem}
% Suppose $F(x)$ is a flat $n$-immersion in $\cL^{2n-1,1}$ with flat normal bundle and that $x$ is the line of curvature co-ordinate system as in Theorem \ref{thm:ndimconf}. If $\I_F= \sum_{i=1}^n q_i \D x_i^2$ is the induced metric, then the Levi-Civita connection 1-form is $\omega_{ij}= \frac{(q_i)_{x_j}}{2 \sqrt{q_iq_j}}\D x_i - \frac{(q_j)_{x_i}}{2 \sqrt{q_iq_j}} \D x_j$. Set 
% \[\xi_{ij}=\begin{cases}\frac{(q_i)_{x_j}}{2 \sqrt{q_iq_j}}, & i\not=j,\\
% 0, & i=j.\end{cases}\]
% It follows from the proof of Theorem \ref{thm:ndimconf} that $\Xi=\bpm 0& \xi^T\\ -J\xi & 0\epm$ is the solution of the $U/K$-system associated to $F$. 
The discussion on constructions of solutions of the $U/K$-system given above and in Theorem \ref{thm:ndimconf2} imply that local flat $n$-immersions in $\cL^{2n-1,1}$ with flat, non-degenerate normal bundle and uniform multiplicity one are determined by $n^2-n$ functions of one variable (the restriction of $\xi$ to a regular line). We state this more precisely. Fix a regular element $b=\sum_{i=1}^n b_i a_i$ in $\fa$. Given a real analytic map $\xi_0=((\xi_0)_{ij}):(-\e, \e)\to\fgl(n)$ such that $(\xi_0)_{ii}=0$ for all $1\leq i\leq n$, there is a unique solution $\Xi$ of the $U/K$-system \eqref{eq:uk2} defined on an open subset of the origin of $\R^n$ such that $\Xi(tb_1, \ldots, tb_n)=\bpm 0 & \xi_0(t)^T\\ -J\xi_0(t) & 0\epm$. The same statement holds if we replace $\xi_0$ by a rapidly decaying smooth map on $\R$ whose $L^1$ norm is less than $1$. For each solution of the $U/K$-system, Theorem \ref{thm:ndimconf2} gives a family of flat $n$-immersions in $\cL^{2n-1,1}$ parameterised by the light-cone of $\R^{n-1,1}$.

% Note that although inverse scattering yields global solutions $\Xi$ to the $U/K$-system, the induced flat lifts will (generally) have singularities and thus not be global immersions $\R^n\to\cL^{2n-1,1}$. Inverse scattering does not, therefore, give us global conformally flat immersions.
\end{rem}

\section{Conformally flat $n$-immersions in $S^{2n+k-2}$}\label{sec:codim}

An immersion $f:M^n\to S^{2n+k-2}$ with flat normal bundle is said to have \emph{uniform multiplicity one} if it has constant multiplicities and each curvature distribution has rank one. It is easy to see that the proofs of the existence of flat lifts and line of curvature co-ordinates still work for these immersions. Let $\lst{v}{n}$ be the curvature normals for a flat lift $F$, $N_v=\ip{v_i}_{i=1}^n$ the \emph{curvature normal bundle}, and $N_v^\perp$ the orthogonal complement of $N_v$ in $N_F$. Note that Theorem \ref{thm:ndimconf} holds if we replace the $U/K$-system with the $\frac{\rO(2n+k-1, 1)}{\rO(n)\times \rO(n+k-1, 1)}$-system and assume that $N_v^\perp$ is flat. We state the analogous results below. 

\begin{thm}
Let $f:M^n\to S^{2n+k-2}$ be a conformally flat immersion with flat normal bundle and uniform multiplicity one. Then:
\begin{enumerate}
\item There is a flat lift $F:M^n\to \cL^{2n+k-1, 1}$ for $f$ with flat normal bundle and uniform multiplicity one,
\item There exist line of curvature co-ordinates for $F$.
\end{enumerate}
\end{thm}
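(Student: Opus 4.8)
The plan is to mirror, essentially verbatim, the two arguments already carried out in Theorem 2.x (the main correspondence) and Theorem \ref{thm:coords}, checking at each point that the only structural inputs used were flatness of the tangent and normal bundles, not the specific co-dimension. For part (1): starting from a conformally flat $f:M^n\to S^{2n+k-2}\subset t_0^\perp$ with $e^{2u}\I_f$ flat, define the flat lift $F:=-e^u(f+t_0):M^n\to\cL^{2n+k-1,1}$. As in the proof of the main correspondence, $\D F=\D u\,F-e^u\D f$ is an immersion, $\I_F=e^{2u}\I_f$ is flat, so $\ip{\D F}$ carries a flat connection. We then pick parallel orthonormal sections $\V e_1,\ldots,\V e_n$ of $\ip{\D F}$, parallel orthonormal sections $\V n_1,\ldots,\V n_{2n+k-3}$ of the (flat) normal bundle $N_f$, and the unique isotropic $\hat F\in N_F$ with $(\hat F,\V n_j)=0$, $(\hat F,F)=1$. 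The moving-frame equations have exactly the same block shape as before — only the ranges of the indices $j,l$ change — and the Maurer--Cartan equations again give $\D\omega_i=0$ and $\D\omega_{ij}=\omega_i\wedge\eta_j$, whence $\omega_i\wedge\D\eta_j=0$ for all $i,j$; since the $\omega_i$ span $T^*M$, the $\eta_j$ are closed and $N_F$ is flat. That $F$ has uniform multiplicity one follows from the fact that the shape operators of $F$ along parallel normal fields are, up to the $e^u$ rescaling and the $\hat F$-correction, conjugate to those of $f$ — the computation relating curvature directions and curvature normals of $F$ and $f$ in Theorem \ref{thm:Ffcurvs} goes through unchanged, so the common eigenspaces (curvature distributions) of $F$ coincide with those of $f$, hence all have rank one.

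For part (2): once $F$ has flat tangent bundle, flat normal bundle, and uniform multiplicity one, I would like to quote Theorem \ref{thm:coords} directly, but its hypothesis is \emph{non-degenerate} normal bundle. In the present co-dimension the normal bundle $N_F$ has rank $n+k-1$ while there are only $n$ curvature normals, so $N_F$ is degenerate as soon as $k\ge 1$; this is the one place the earlier proof does not transcribe literally. The fix is to work inside the \emph{curvature normal bundle} $N_v=\ip{v_1,\ldots,v_n}$ rather than all of $N_F$. The key lemma to re-establish is that the curvature normals $v_i$ are still non-isotropic: Lemma \ref{lemm:Fcurv} shows they are mutually orthogonal (its proof uses only the Gauss equation for a flat immersion, which is co-dimension-independent), and the non-vanishing and non-isotropy argument from Theorem \ref{thm:Ffcurvs}(3),(4) — taking norm-squares of the relations \eqref{eq:viproj} and \eqref{eq:vidist} to derive a contradiction from $v_i=0$ or $v_i-v_j\in\ip F$ — again only needs $\dim\ip{\D f}=n$ and the signature of the ambient form, not the co-dimension. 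Granting this, the relations \eqref{eq:cross1}, \eqref{eq:cross2} still hold, giving $\pi_{\ip{\D F}}\D v_j=-\omega_j(v_j,v_j)\V e_j$; setting $n_i=|(v_i,v_i)|^{1/2}$ and $\epsilon_i=\sgn(v_i,v_i)$, the identity $n_i\omega_i=(\epsilon_i n_i^{-1}v_i,\D\V e_i)$ together with $|n_i^{-1}v_i|=|\V e_i|=1$ yields $\D(n_i\omega_i)=\epsilon_i(\D(n_i^{-1}v_i)\overset\wedge,\D\V e_i)=0$, exactly as in the proof of Theorem \ref{thm:coords}. Since the $\omega_i$ are independent closed (up to the nonvanishing factor $n_i$) forms, there are local co-ordinates $x_i$ with $\omega_i\propto\D x_i$, and these are curvature line co-ordinates because the $\omega_i$ simultaneously diagonalise $\I_F$ and $\II_F$.

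The single genuine obstacle, then, is the degeneracy of $N_F$: the statement $F=-\sum_j v_j/(v_j,v_j)$ from Theorem \ref{thm:coords} used that the $v_j$ form a \emph{frame} of $N_F$, which is now false. One should either restate part (2) without that explicit formula (the theorem as stated here only claims existence of line of curvature co-ordinates, so this is unnecessary), or, if one wants the formula, decompose $N_F=N_v\oplus N_v^\perp$ and note that $F\in N_v$ because $F$ is orthogonal to every $\V n_j$ spanning $N_v^\perp$ and to $\ip{\D F}$, so $F=\sum\mu_j v_j$ with the $\mu_j$ determined as before by comparing $\D F=\sum\omega_j\V e_j$ with the projection $\pi_{\ip{\D F}}\D(\sum\mu_j v_j)$; this is exactly where the hypothesis "$N_v^\perp$ is flat" enters (one needs parallel sections of $N_v^\perp$ to run the moving-frame bookkeeping cleanly). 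Everything else is routine re-indexing of the earlier proofs.
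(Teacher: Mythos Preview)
Your approach matches the paper's: the paper gives no proof beyond the remark that ``the proofs of the existence of flat lifts and line of curvature co-ordinates still work,'' and you have correctly verified that the moving-frame argument for part~(1) and the $\D(n_i\omega_i)=0$ computation for part~(2) are codimension-independent, the latter needing only that each $v_i$ be non-isotropic rather than that the $v_i$ frame all of $N_F$.

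The one step that does not transcribe as you claim is the non-isotropy itself. You say the argument of Theorem~\ref{thm:Ffcurvs}(4) ``only needs $\dim\ip{\D f}=n$ and the signature of the ambient form, not the co-dimension,'' but this is not quite so. Taking norm-squares in \eqref{eq:vidist} gives $(v_i,v_i)+(v_j,v_j)=|v_i^S-v_j^S|^2>0$ for $i\neq j$, so at most one $v_i$ can fail to be spacelike; the paper's conclusion ``the $v_i$ are thus all non-isotropic'' then rests on an implicit dimension count: if $v_1$ were isotropic, all $n$ of the (linearly independent) $v_i$ would lie in the degenerate hyperplane $v_1^\perp\cap N_F$, which in the original case has dimension $n-1$, a contradiction. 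When $\dim N_F=n+k$ with $k\ge 1$ that hyperplane has dimension $n+k-1\ge n$ and the contradiction evaporates --- a single isotropic curvature normal is not excluded by this argument. This does not wreck the theorem, since the bare existence of line of curvature co-ordinates for a submanifold with flat normal bundle and rank-one curvature distributions follows from Codazzi and Frobenius alone (cf.\ \cite{Terng1987}) without any appeal to the $n_i$; and indeed the subsequent theorem in the paper only treats the two non-isotropic cases $(v_i,v_i)>0$ for all $i$ or $(v_n,v_n)<0$. But your stated justification for non-isotropy, and hence for the specific rescaled closed forms $n_i\omega_i$, has a gap that the codimension-$n$ proof does not fill.
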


The symmetric space $\frac{\rO(2n+k-1, 1)}{\rO(n)\times \rO(n+k-1, 1)}$ is defined by the involution $\tau(\xi)=\I_{n, n+k} \xi \I_{n, n+k}^{-1}$, and the $\pm 1$ eigenspaces of $\tau$ are $\fk= \fo(n)\times \fo(n+k-1, 1)$, and
\[\fp=\left\{\bpm 0& \xi \\ -J\xi^t &0\epm\, \big| \, \xi\in \fgl(n, n+k)\right\},\qquad {\rm where\quad} J=\I_{n+k-1,1}.\]
Set 
\begin{gather*}
\fa_+=\ip{a_i}_{i=1}^n, \quad {\rm where\quad} a_i=e_{i, n+i} - e_{n+i, i}, \\
\fa_-=\ip{b_i}_{i=1}^n, \quad {\rm where\quad} b_i= e_{i, n+k+i}- \e_i e_{n+k+i, i},
\end{gather*}
where $\e_i=1$ for $i<n$ and $\e_n=-1$. Then both $\fa_+, \fa_-$ are maximal abelian subalgebras in $\fp$, and 
\begin{gather*}
\fa_+^\perp\cap\fp= \left\{\bpm 0& \xi\\ -J \xi^t&0\epm \,\big| \, \xi=(\xi_{ij})\in \fgl(n, n+k), \xi_{ii}=0, 1\leq i\leq n\right\},\\ 
\fa_-^\perp\cap \fp= \left\{ \bpm 0&\xi\\ -J\xi^t&0\epm\, \big| \, \xi=(\xi_{ij})\in \fgl(n, n+k), \xi_{i, k+i}=0\right\}, 
\end{gather*}

\begin{thm}  Let $F:M^n\to \cL^{2n+k-1, 1}$ be a flat lift of $f$ with flat normal bundle and uniform multiplicity one, $\lst{v}{n}$ the curvature normals for $F$,  and $\fa_\pm$ as above. Suppose that the induced connection on the subbundle $N_v^\perp$ of $N_F$ is flat, where $N_v=\li v_i\ri_{i=1}^n$.

\noindent (i) If $(v_i,v_i)>0$ for all $1\leq i\leq n$,  then there exists a solution $\Xi:M^n\to \fa_+^\perp\cap \fp$ of the $\frac{\rO(2n+k-1, 1)}{\rO(n)\times \rO(n+k-1, 1)}$-system defined by $\fa_+$ and an $\rO(2n+k-1, 1)$ frame $\Phi=(\vect{e}{n},\vect{u}{n+k})$ such that
\begin{enumerate}
\item $\V e_i$ are principal curvature directions, $\V u_i=v_i/(v_i,v_i)^{1/2}$ for $1\leq i\leq n$, and $(\D\V u_i,\V u_j)=0$ if $n< i, j\leq n+k$,
\item $\Phi^{-1}\D \Phi= \sum_{i=1}^n (a_i + [a_i, \Xi])\D x_i$.  
\end{enumerate}
\noindent (ii) If $(v_n, v_n)<0$, then there exists a solution $\Xi:M^n\to \fa_-^\perp\cap \fp$ of the $\frac{\rO(2n+k-1,1)}{\rO(n)\times\rO(n+k-1, 1)}$-system defined by $\fa_-$ and an $\rO(2n+k-1, 1)$ frame $\Phi=(\vect{e}{n},\vect{u}{n+k})$ such that
\begin{enumerate}
\item $\V e_i$ are principal curvature directions, $\V u_{k+i}=v_i/(v_i,v_i)$ for $1\leq i\leq n$, and $(\D\V u_i,\V u_j)=0$ if $1\leq i, j\leq k$,
\item  $\Phi^{-1}\D \Phi= \sum_{i=1}^n (b_i + [b_i, \Xi])\D x_i$. 
\end{enumerate}
Moreover, there is a constant null vector $\V c\in \R^{n+k-1,1}$ such that 
$F=\Phi\stwovec 0{g_2^{-1}\V c}$, where $\sum_{i=1}^n [a_i, \Xi]\D x_i = g^{-1}\D g$ and $g=\bpm g_1&0 \\ 0& g_2\epm$.
\end{thm}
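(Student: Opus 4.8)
The plan is to follow the proof of Theorem~\ref{thm:ndimconf} almost verbatim; the genuinely new ingredients are the extra flat subbundle $N_v^\perp$ and the signature bookkeeping that separates the two cases. First I would record the geometric preliminaries. As noted at the start of this section, $F$ has line of curvature co-ordinates $x_1,\ldots,x_n$; the curvature normals $v_1,\ldots,v_n$ are non-zero (cf.\ Theorem~\ref{thm:Ffcurvs}) and, by Lemma~\ref{lemm:Fcurv}, mutually orthogonal, so $N_v=\li v_1,\ldots,v_n\ri$ is non-degenerate. Since every shape operator satisfies $\at{A_w}{E_i}=(w,v_i)\Id$, any $w\in N_v^\perp$ has $A_w=0$. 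As $\nabla^\perp$ is metric and $N_v^\perp$ is flat, we may choose a parallel orthonormal frame of $N_v^\perp$; its orthogonal complement $N_v$ is then also $\nabla^\perp$-invariant. Normalising the curvature normals as in \eqref{eq:m} produces unit sections $\V u_i$ of $N_v$ (with $\V u_n$ time-like precisely in case (ii)), and together with the parallel frame of $N_v^\perp$ and an $\rO(n)$ frame $\vect{e}{n}$ of corresponding principal directions this assembles into $\Phi$ and establishes (1). Which slots the $N_v$-directions occupy in $\Phi$ --- positions $n+1,\ldots,2n$ in case (i), positions $n+k+1,\ldots,2n+k$ in case (ii) --- is forced by the requirement that $\Phi$ be $\rO(2n+k-1,1)$-valued, i.e.\ that the unique time-like direction be last; this is also the reason the signs $\e_i$ intervene in the definition of $\fa_-$.

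Next I would compute $\theta=\Phi^{-1}\D\Phi$. By \eqref{eq:cross1} the $N_F$-component of $\D\V e_i$ is $\omega_iv_i$, a multiple of $\V u_i$ alone, so in $\theta$ the tangent--normal coupling links only the pairs $(\V e_i,\V u_i)$; in particular nothing couples into $N_v^\perp$, consistently with $A_w=0$ there. With the normalisation of \eqref{eq:m} this coupling equals $\sum_i a_i\,\D x_i$ in case (i) (resp.\ $\sum_i b_i\,\D x_i$ in case (ii)). As in the proof of Theorem~\ref{thm:ndimconf}, flatness of $F$ in line of curvature co-ordinates (the Gauss equation) forces the entries of the tangential connection $A$ and of the $N_v$-part of the normal connection to lie in $\li\D x_i,\D x_j\ri$; encoding these in an off-diagonal matrix $\xi$ and setting $\Xi=\left(\begin{smallmatrix}0&\xi^T\\-J\xi&0\end{smallmatrix}\right)$, the fact that the $(\V e_i,\V u_i)$-entry of $\theta$ supplies only the term $a_i\,\D x_i$ (resp.\ $b_i\,\D x_i$) is exactly the vanishing of $\xi_{ii}$ (resp.\ $\xi_{i,k+i}$). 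Hence $\Xi$ takes values in $\fa_+^\perp\cap\fp$ (resp.\ $\fa_-^\perp\cap\fp$) and $\theta=\sum_i(a_i+[a_i,\Xi])\,\D x_i$, which is (2); moreover the flatness $\D\theta+\theta\wedge\theta=0$, i.e.\ the Gauss-Codazzi equations of $F$, spells out as \eqref{eq:uk2} for this $\fa_\pm$, so $\Xi$ solves the system defined by $\fa_+$ (resp.\ $\fa_-$).

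For the last assertion, the $\fk$-valued part of $\theta$ is $\sum_i[a_i,\Xi]\,\D x_i=\diag(A,B')$, where $B'$ is the connection form of $N_F$ in the frame $(\V u_1,\ldots,\V u_{n+k})$; being the direct sum of two flat connections it is flat, so $g=\diag(g_1,g_2)\in K$ with $g^{-1}\D g=\sum_i[a_i,\Xi]\,\D x_i$ exists and $g_2^{-1}\D g_2=B'$. Then $(\V u_1,\ldots,\V u_{n+k})g_2^{-1}$ is a parallel orthonormal frame of $N_F$, and since $F$ is an isotropic parallel normal section it is a constant combination of it: $F=(\V u_1,\ldots,\V u_{n+k})g_2^{-1}\V c=\Phi\stwovec{0}{g_2^{-1}\V c}$ for some fixed null $\V c\in\R^{n+k-1,1}$. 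I expect the only real friction to be the signature bookkeeping in the two cases --- matching the index conventions for $\fa_+$ versus $\fa_-$ and the signs $\e_i$ to the placement of the time-like normal --- together with checking that the tangential and $N_v$-normal connection blocks assemble cleanly into $[a_i,\Xi]\,\D x_i$ with the asserted zero pattern; everything else transcribes the $k=0$ argument of Theorem~\ref{thm:ndimconf}.
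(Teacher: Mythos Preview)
Your proposal is correct and is exactly the approach the paper takes: it does not give a separate proof but states that Theorem~\ref{thm:ndimconf} carries over once one replaces the $U/K$-system by the $\frac{\rO(2n+k-1,1)}{\rO(n)\times\rO(n+k-1,1)}$-system and assumes $N_v^\perp$ is flat, the only new ingredients being the parallel frame of $N_v^\perp$ and the index/signature bookkeeping distinguishing $\fa_+$ from $\fa_-$. One small terminological slip: the constraint forcing $A_{ij},B_{ij}\in\li\D x_i,\D x_j\ri$ comes from the mixed (Codazzi) part $\delta\wedge A+B\wedge\delta=0$ of the flatness of $\theta$, not from the Gauss equation.
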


\begin{thm}
Let $\Xi$ be a solution of the $\frac{\rO(2n+k-1,1)}{\rO(n)\times\rO(n+k-1, 1)}$-system defined by $\fa_+$ ($\fa_-$ resp.), $\Phi_\l$ an extended flat frame for the Lax pair of $\Xi$ and $\V c\in \R^{n+k-1,1}$ a null vector. Then $\Phi_0=\bpm g_1&0\\ 0& g_2\epm$, and $F=\Phi_1\stwovec 0{g_2^{-1}\V c}$ is flat with flat normal bundle, uniform multiplicity one, and the subbundle $N_v^\perp$ is flat.\\
A different choice of null vector $\bb$ yields a Combescure transform $F_\bb=\Phi_1\stwovec 0{g_2^{-1}\bb}$ of $F$.
\end{thm}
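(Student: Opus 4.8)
The plan is to run the proof of Theorem~\ref{thm:ndimconf2} — the $k=0$ case — essentially verbatim, pausing only at the two points where the extra normal directions spanning $N_v^\perp$ intervene. I would treat $\fa_+$ first, the case of $\fa_-$ being identical after re-ordering the three index blocks (the tangent block, and the two pieces of the normal). Since $\Phi_0$ lies in $K=\rO(n)\times\rO(n+k-1,1)$ it is block diagonal, $\Phi_0=\bpm g_1&0\\0&g_2\epm$; writing the $\fk$-part of $\theta_1$ as $\diag(A,B)$, with $A$ an $\fo(n)$- and $B$ an $\fo(n+k-1,1)$-valued $1$-form, and the $\fp$-part as $\sum_i a_i\,\D x_i$, one gets $g_1^{-1}\D g_1=A$ and $g_2^{-1}\D g_2=B$. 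Because $\bc$ is null and $\Phi_1\in\rO(2n+k-1,1)$ is real, the vector $F=\Phi_1\stwovec 0{g_2^{-1}\bc}$ is isotropic, so $F:M^n\to\cL^{2n+k-1,1}$.

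First I would reproduce the computation of $\D F$ from Theorem~\ref{thm:ndimconf2}: feeding $g_2^{-1}\D g_2=B$ into $\Phi_1^{-1}\D\Phi_1=\theta_1$ makes the normal-connection terms cancel, leaving $\D F=-\sum_{j=1}^n\e_j q_j\V e_j$, where $\Phi_1=(\vect e n,\vect u{n+k})$ and $(q_1,\dots,q_n)^t=g_2^{-1}\bc$. Hence, on the open set where all $q_j\neq 0$, $F$ is immersed with tangent frame $\{\V e_j\}$ and $\I_F=\sum_j\e_j q_j^2\,\D x_j^2$. Differentiating $\D F=\sum_j\omega_j\V e_j$ (with $\omega_j=-\e_j q_j\,\D x_j$) a second time shows that the tangential connection form $A$ is torsion free, hence is the Levi-Civita connection of $\I_F$ in the frame $\{\V e_j\}$; since $\theta_1$ is flat, $A$ and $B$ are flat, so $\I_F$ is flat and $N_F$ is flat. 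By the light-cone correspondence of Sections~\ref{sec:flatlift} and~\ref{sec:codim}, $F$ is then a flat lift of a conformally flat immersion $f:M^n\to S^{2n+k-2}$ with flat normal bundle. For uniform multiplicity one I would observe that the $\fp$-part $\sum_i a_i\,\D x_i$ of $\theta_1$ couples $\V e_i$ only with $\V u_i$ (for $1\le i\le n$), so $\pi_{\ip{\D F}}\D\V u_i$ is a multiple of $\omega_i\V e_i$ while $\pi_{\ip{\D F}}\D\V u_\a=0$ for $n<\a\le n+k$; hence every shape operator is diagonal in $\{\V e_j\}$, each curvature distribution is the line $\ip{\V e_i}$, and the curvature normals are $v_i=q_i^{-1}\V u_i$ — distinct and nonzero wherever $q_i\neq 0$. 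In particular the curvature normal bundle is $N_v=\ip{\vect u n}$.

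The one place the argument genuinely goes beyond the $k=0$ case — and the step I expect to be the (only mild) obstacle — is showing that $N_v^\perp=\ip{\V u_{n+1},\dots,\V u_{n+k}}$ is flat. Its induced connection is the lower-right $k\times k$ block $B''$ of $B$, and $B$ is $\sum_i\D x_i$ times the $\fo(n+k-1,1)$-part of $[a_i,\Xi]$. Writing $a_i=\bpm 0&E_i\\-JE_i^t&0\epm$, where $E_i$ is the $n\times(n+k)$ matrix with a single $1$ in position $(i,i)$, and $\Xi=\bpm 0&\xi\\-J\xi^t&0\epm$ with $\xi\in\fgl(n,n+k)$ and $\xi_{ii}=0$ (the defining condition of $\fa_+^\perp\cap\fp$), a short computation gives that the $\fo(n+k-1,1)$-component of $[a_i,\Xi]$ equals $J\xi^tE_i-JE_i^t\xi$, every nonzero entry of which lies in a row or a column indexed $\le n$; summing over $i$, $B$ has no block inside $\ip{\V u_{n+1},\dots,\V u_{n+k}}$, i.e.\ $B''=0$. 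Thus the $\V u_\a$ with $\a>n$ are parallel and $N_v^\perp$ is flat. The identical computation with $b_i$ in place of $a_i$ (now using $\xi_{i,k+i}=0$) settles the $\fa_-$ case, the flat subbundle there being $\ip{\V u_1,\dots,\V u_k}$.

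Finally, for the Combescure statement I would repeat the $\D F$ computation with a second null vector $\bb$: with $(\tilde q_1,\dots,\tilde q_n)^t=g_2^{-1}\bb$, everything above applies to $F_\bb=\Phi_1\stwovec 0{g_2^{-1}\bb}$, which is again a flat lift of a conformally flat immersion with flat normal bundle, uniform multiplicity one, and flat $N_v^\perp$, and $\D F_\bb=-\sum_j\e_j\tilde q_j\V e_j$. Hence $F$ and $F_\bb$ have the same principal directions (the coordinate lines) and parallel tangent frames $\{\V e_j\}$, which is exactly what it means for $F(x)\mapsto F_\bb(x)$ to be a Combescure transform.
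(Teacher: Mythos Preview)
Your proposal is correct and is exactly the approach the paper intends: the theorem is stated without proof as the ``analogous result'' to Theorem~\ref{thm:ndimconf2}, and you have carried out that analogy, including the one genuinely new step (the vanishing of the $k\times k$ block of $B$, giving flatness of $N_v^\perp$), which you compute correctly. One small slip to clean up: $g_2^{-1}\bc$ lies in $\R^{n+k}$, so write $g_2^{-1}\bc=(q_1,\dots,q_{n+k})^t$; only the $n$ components coupled to the $a_i$ (resp.\ $b_i$) enter $\D F$ and $\I_F$, while all $n+k$ appear in $F=\sum_\a q_\a\V u_\a$.
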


\section{Dressing action and Ribaucour transformations}\label{sec:rib}

We first give a brief review of the dressing action (cf. \cite{Terng2000}) for the $U/K$-system. Then we construct simple elements in the rational loop group with two poles that lie in the negative loop group, and give explicit formulae for the dressing action of simple elements on solutions of the $U/K$-system and the corresponding action on the flat lifts of conformally flat $n$-immersions in $S^{2n-2}$. We prove in the end of the section that the dressing action of simple elements on flat lifts are Ribaucour transformations enveloping $n$-sphere or $n$-hyperboloid congruences which moreover project down to Ribaucour transforms enveloping $n$-sphere congruences of conformally flat $n$-immersions in $S^{2n-2}$. 

Throughout, $U^\C=\rO(2n-1,1,\C)$ where $\cl{\phantom{\lambda}}$ will denote conjugation across the real form $U\subset U^\C$. $\tau$ will refer both to the symmetric involution of $U^{\C}$ which defines the symmetric space $U/K$ \emph{and} its derivative at the identity, whose eigenspaces are $\fk,\fp$.

Fix $\e>0$. Let $\cO_\e=\{\l\in \pr^1\n \e^{-1}<\l\le\infty\}$, and $L(U)$ the group of holomorphic maps from $\cO_\e\setminus \{\infty\}$ to $U^C$ that satisfy the $U/K$ reality condition \eqref{eq:real}.  Let $L^+(U)$ denote the subgroup of $g_+\in L(U)$ that is the restriction of a holomorphic map on $\C$, and $L^-(U)$ the subgroup of $g_-\in L(U)$ that is the restriction of a holomorphic map defined on $\cO_\e$ such that $g_-(\infty)=\Id$.  The Birkhoff Factorisation Theorem implies that the multiplication maps $L^+(U)\times L^-(U)\to L(U)$ and $L^-(U)\times L^+(U)\to L(U)$ defined by $(g_+, g_-)\mapsto g_+g_-$ and $(g_-, g_+)\mapsto g_-g_+$ are injective and the images are open dense.  Hence the \emph{big cell} 
$${\mathcal O\/}(U):=(L^+(U)L^-(U))\cap (L^-(U)\cap L^+(U))$$ is open and dense.   The local dressing action of $L^-(U)$ on $L^+(U)$ is defined as follows. Since the big cell ${\mathcal O}(U)$ is open and dense, given $g_\pm\in L^\pm(U)$, there generically exist $\hat g_\pm\in L^\pm(U)$ such that $g_-g_+=\hat g_+\hat g_-$. Then $g_-\sharp g_+:= \hat g_+$ defines a local action of $L^-(U)$ on $L^+(U)$: this is the \emph{dressing action}.

It is proved in \cite{Terng2000} that the dressing action of $L^-(U)$ induces an action of $L^-(U)$ on the space of solutions of the $U/K$-system:

\begin{thm} \cite{Terng2000}
Let $\Phi_\l$ be the normalised flat frame of the Lax pair of a solution $\Xi$ of the $U/K$-system, and $g\in L^-(U)$. Then
\begin{enumerate}
\item There exists an open subset $B$ of the origin in $\R^n$ such that the dressing action of $g$ at $\Phi(x)$, $\ti\Phi(x):=g\sharp \Phi(x)$, is defined for all $x\in B$, in other words, we can factor $g\Phi(x)= \ti \Phi(x) \ti g(x)$ with $\ti \Phi(x)\in L^+(U)$ and $\ti g(x)\in L^-(U)$ for all $x\in B$, 
\item  $\ti\Phi_\l$ is the normalised frame for a solution of the $U/K$-system, which will be denoted by $g\sharp \Xi$.
\end{enumerate}
\end{thm}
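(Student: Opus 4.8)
The plan is to follow the standard dressing argument from \cite{Terng2000}. For part (1), I would start at the base point: the normalisation $\Phi_\l(0)=\Id$ gives $g\Phi_\l(0)=g$, and since $g=\Id\cdot g$ already exhibits $g$ as a product in $L^+(U)L^-(U)$, the loop $g\Phi_\l(0)$ lies in the big cell $\mathcal O(U)$. Because $\mathcal O(U)$ is open and $x\mapsto g\Phi(x)$ is smooth into $L(U)$, there is an open neighbourhood $B$ of $0$ on which $g\Phi(x)$ stays in $\mathcal O(U)$; the Birkhoff Factorisation Theorem then produces, uniquely, $\ti\Phi(x)\in L^+(U)$ and $\ti g(x)\in L^-(U)$ with $g\Phi(x)=\ti\Phi(x)\ti g(x)$, depending smoothly on $x$ by smoothness of the factorisation map. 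Uniqueness applied at $x=0$ forces $\ti\Phi_\l(0)=\Id$ and $\ti g_\l(0)=g$, so the resulting frame is automatically normalised; and since the $U/K$-reality condition \eqref{eq:real} is built into the definitions of $L^\pm(U)$ and is preserved under multiplication, $\ti\Phi_\l$ and $\ti g_\l$ again satisfy it. In particular, expanding $\ti g_\l=\Id+\ti g_1\l^{-1}+O(\l^{-2})$ near $\l=\infty$, the relation $\tau\ti g_\l=\ti g_{-\l}$ forces $\ti g_1\in\fp$, a fact I will need below.

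For part (2), the heart of the matter is to compute the Lax connection of $\ti\Phi_\l$. Since $g$ does not depend on $x$, differentiating $\ti\Phi_\l=g\Phi_\l\ti g_\l^{-1}$ yields
\[\ti\Phi_\l^{-1}\D\ti\Phi_\l=\ti g_\l\,\theta_\l\,\ti g_\l^{-1}-\D\ti g_\l\cdot\ti g_\l^{-1},\]
where $\theta_\l=\l\mathcal A+\mathcal B$ with $\mathcal A=\sum_i a_i\,\D x_i$ (valued in $\fp$) and $\mathcal B=\sum_i[a_i,\Xi]\,\D x_i$ (valued in $\fk$, since $[\fp,\fp]\subset\fk$). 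The left-hand side extends holomorphically to all of $\l\in\C$ because $\ti\Phi_\l\in L^+(U)$ takes values in the group $\rO(2n-1,1,\C)$. On the right-hand side $\ti g_\l\mathcal B\ti g_\l^{-1}$ and $\D\ti g_\l\cdot\ti g_\l^{-1}$ are holomorphic at $\l=\infty$ (the latter vanishing there), whereas $\l\,\ti g_\l\mathcal A\ti g_\l^{-1}=\l\mathcal A+[\ti g_1,\mathcal A]+O(\l^{-1})$. Thus $\ti\Phi_\l^{-1}\D\ti\Phi_\l$ is entire in $\l$ with at most linear growth at $\infty$, so by Liouville it equals $\l\mathcal A+\ti{\mathcal B}$ with $\ti{\mathcal B}=\mathcal B+[\ti g_1,\mathcal A]$ independent of $\l$. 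Writing $\ti g_1=\ti g_1^{\fa}+\ti g_1^{\perp}$ with $\ti g_1^{\perp}\in\fa^\perp\cap\fp$ and using $[a_i,\fa]=0$, I would rewrite $\ti{\mathcal B}=\sum_i[a_i,\Xi-\ti g_1]\,\D x_i=\sum_i[a_i,\ti\Xi]\,\D x_i$ with $\ti\Xi:=\Xi-\ti g_1^{\perp}:B\to\fa^\perp\cap\fp$. Hence $\ti\Phi_\l^{-1}\D\ti\Phi_\l=\sum_i(\l a_i+[a_i,\ti\Xi])\,\D x_i$ is precisely the Lax pair of $\ti\Xi$; since $\ti\Phi_\l$ is a genuine frame this connection is flat for every $\l$, so by the equivalence of the conditions recalled in Section \ref{sec:UK} the map $\ti\Xi$ solves the $U/K$-system. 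Being normalised, $\ti\Phi_\l$ is then its normalised flat frame, and one sets $g\sharp\Xi:=\ti\Xi$.

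The holomorphy bookkeeping and the $\fa$-decomposition in the last step are routine. The genuine obstacle is the functional-analytic input underlying part (1): the openness of the big cell $\mathcal O(U)$ and the smooth (parameter-)dependence of the Birkhoff factorisation, so that $\ti\Phi(x)$ and $\ti g(x)$ are actually smooth in $x$, together with fixing a loop-group topology on $L^\pm(U)$ making $x\mapsto g\Phi(x)$ continuous into $L(U)$. This is exactly the material cited from \cite{Terng2000}; a self-contained treatment would require equipping $L^\pm(U)$ with the appropriate Banach (or Fréchet) structure and invoking the implicit function theorem, after which the rest of the proof is the elementary computation above.
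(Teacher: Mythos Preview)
Your argument is correct and is exactly the standard dressing computation from \cite{Terng2000}. Note, however, that the paper does not actually prove this theorem: it is quoted from \cite{Terng2000} without proof, so there is no ``paper's own proof'' to compare against. That said, the paper later reproduces the core of your part (2) calculation in the special case of simple elements (see \eqref{eq:logdress} and the sentence following it in the proof of Theorem~\ref{thm:dress}), expanding $\ti\Phi_\l^{-1}\D\ti\Phi_\l=\ti p\,\theta_\l\,\ti p^{-1}-\D\ti p\,\ti p^{-1}$ at $\l=\infty$ to conclude the result is degree one in $\l$; your treatment is simply the general-$g$ version of that same step, together with the Birkhoff openness argument for part (1) that the paper leaves entirely to the reference.
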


In general it is difficult to calculate the dressing action of a given $g_-\in L^-(U)$ on $L^+(U)$ explicitly. It is, however, now standard theory that if $g\in L^-(U)$ is rational \cite{Terng2000, Bruck2002, Burstall2004}, then the dressing action of $g$ on $L^+(U)$  can be computed explicitly via residue calculus. Moreover, the action of a rational $g\in L^-(U)$ with minimal number of poles (a so-called \emph{simple} element) often corresponds to known classical transforms (e.g.\ B\"acklund transforms of pseudospherical surfaces, Darboux transforms of isothermic surfaces, etc.). 

Let $\tau$ be conjugation by $\rho=\I_{n,n}$, choose a scalar $\alpha\in\R^\times\cup i\R^\times$, and an isotropic line $\ell$ such that either
\begin{equation}\label{ba}
\ell\le\R^{2n-1,1}\text{ and }\alpha\in\R^\times,\quad\text{or}\quad \ell\le\R^n\oplus i\R^{n-1,1}\text{ and }\alpha\in i\R^\times.
\end{equation}
Let $L=\ell^\C$ and suppose in addition that $\rho L\neq L$ (equivalently $\rho L\not\perp L$). Let $\pi_L$ denote projection onto $L$ away from $\rho L^\perp$. In fact, if $\ell=\ip{v}$ with $|v|=0$ and $(v,\rho v)=1$, then 
\[\pi_L= vv^T\rho, \quad \pi_{\rho L}= \rho vv^T.\]
Define the \emph{simple element} $p_{\alpha,L}$ by
\begin{equation}\label{bd}
p_{\alpha,L}(\lambda)=\frac{\lambda-\alpha}{\lambda+\alpha}\pi_L+\pi_{(L\oplus\rho L)^\perp}+\frac{\lambda+\alpha}{\lambda-\alpha}\pi_{\rho L}.
\end{equation}
Then 
\[p_{\a, L}= g_{\a, \rho L} g_{-\a, L}, \qquad {\rm where}\quad g_{\a, L}(\l)= \I + \frac{2\a}{\l-\a}\pi_L.\]
It is easily checked that $p_{\a,L}$ is an element of $L^-(U)$.

Let $\Xi:\R^n\to\fa^\perp\cap\fp$ be a solution to the $U/K$-system \eqref{eq:uk2}. An extended frame $\Phi_\lambda$ of the Lax pair $\theta_\l$ \eqref{eq:bb} of $\Xi$ is called the \emph{normalised extended frame} if $\Phi_\l(0)= \Id$. Since the solution of an ODE depending on a holomorphic parameter is holomorphic in that parameter, the map $\l\mapsto \Phi_\l(x)$ lies in $L^+(U)$.  

Using the dressing action of $g_{\a,L}$ computed in \cite{Terng2000}, or alternatively from \cite{Bruck2002, Burstall2004}, we get the following:

\begin{thm}\label{thm:dress}
Let $\Phi_\l(x)$ be the normalised extended frame for a solution of the $U/K=\frac{\rO(2n-1,1)}{\rO(n)\times \rO(n-1,1)}$-system, and $p_{\a,L}$ the simple element defined by \eqref{bd}. Then there is an open subset $B$ of the origin in $\R^n$ such that $\ti \Phi(x):= p_{\alpha,L}\# \Phi(x)$, the dressing action of $p_{\a, L}$ on $\Phi(x)$, is defined for all $x\in B$. Moreover, 
\begin{enumerate}
\item $\ti L(x)= \Phi_\a^{-1}(x)L$ and $\a$ satisfy \eqref{ba},
\item 
\[\ti \Phi(x)=p_{\alpha,L}\# \Phi(x)=p_{\alpha,L} \Phi(x) p^{-1}_{\alpha,\Phi^{-1}_\alpha(x) L}\]
 is the normalised extended frame of a new solution of the $U/K$-system,
 \item $p_{\a,L}\sharp \Phi(x)$ is well-defined if $\Phi_\a^{-1}(x)L \neq \rho(\Phi_\a^{-1}(x)L)$. 
\end{enumerate}
\end{thm}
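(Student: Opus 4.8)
The plan is to reduce everything to the known dressing action of the single-pole simple elements $g_{\a,L}$ computed in \cite{Terng2000}, using the factorisation $p_{\a,L}=g_{\a,\rho L}g_{-\a,L}$. First I would recall from \cite{Terng2000} that for the normalised extended frame $\Phi_\l(x)\in L^+(U)$, the dressing action of $g_{-\a,L}$ is given explicitly by $g_{-\a,L}\#\Phi(x)=g_{-\a,L}\,\Phi(x)\,g_{-\a,\,\Phi_\a^{-1}(x)L}^{-1}$, defined on an open neighbourhood $B_1$ of the origin wherever $\Phi_\a^{-1}(x)L$ stays transverse to the relevant complementary subspace. The point is that the candidate line $\ti L(x)=\Phi_\a^{-1}(x)L$ is exactly the image of $L$ under evaluation of $\Phi^{-1}$ at the pole $\l=\a$; one checks it is isotropic because $\Phi_\a\in\rO(2n-1,1,\C)$ (or the appropriate real/imaginary form) preserves the bilinear form, and that the reality conditions \eqref{eq:real} on $\Phi_\l$ together with \eqref{ba} for $\a,L$ force $\ti L(x)$ and $\a$ to again satisfy \eqref{ba}. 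This gives part (1).

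For part (2), I would compose the two single-pole dressings. Writing $\Phi'=g_{-\a,L}\#\Phi$ with the formula above, one then applies $g_{\a,\rho L}$ to $\Phi'$; the key observation is that the relevant line for the second step is $(\Phi'_{-\a})^{-1}(\rho L)$, and using $\Phi'_{-\a}=g_{-\a,L}(-\a)\,\Phi_{-\a}\,g_{-\a,\Phi_\a^{-1}(x)L}(-\a)^{-1}$ together with the $\tau$-reality $\tau\Phi_\l=\Phi_{-\l}$ (which says $\rho\Phi_{-\a}\rho^{-1}=\Phi_\a$), one identifies this line with $\rho(\Phi_\a^{-1}(x)L)=\rho\ti L(x)$. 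Telescoping the product $g_{\a,\rho L}\,g_{-\a,L}\,\Phi\,g_{-\a,\ti L(x)}^{-1}\,g_{\a,\rho\ti L(x)}^{-1}$ and recognising $p_{\a,\ti L(x)}=g_{\a,\rho\ti L(x)}g_{-\a,\ti L(x)}$ gives precisely
\[
\ti\Phi(x)=p_{\a,L}\,\Phi(x)\,p_{\a,\Phi_\a^{-1}(x)L}^{-1}.
\]
That $\ti\Phi(x)\in L^+(U)$ — i.e.\ the apparent poles at $\l=\pm\a$ genuinely cancel — follows from the residue computation: the residue of $p_{\a,L}\Phi(x)$ at $\l=\a$ lands in $L$, and right-multiplication by $p_{\a,\ti L(x)}^{-1}$, whose residue structure at $\l=\a$ is governed by $\ti L(x)=\Phi_\a^{-1}(x)L$, kills it, and similarly at $\l=-\a$. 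One then checks $\ti\Phi_\l$ satisfies the $U/K$-reality condition (inherited from that of $\Phi_\l$, $p_{\a,L}$, and $p_{\a,\ti L(x)}$) and that $\ti\Phi_\l^{-1}\D\ti\Phi_\l$ has the Lax form $\sum(\l a_i+[a_i,\ti\Xi])\D x_i$ for a new solution $\ti\Xi$; this last step is the standard dressing argument — conjugating a flat connection of Lax type by a rational element with the correct pole behaviour preserves the type — so I would cite \cite{Terng2000} rather than redo it.

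Part (3) is then essentially the hypothesis needed to run the above: the two single-pole factorisations are defined on a common open set $B\subset B_1$ precisely when $\ti L(x)=\Phi_\a^{-1}(x)L$ is transverse to $\rho(\ti L(x))$, equivalently $\Phi_\a^{-1}(x)L\neq\rho(\Phi_\a^{-1}(x)L)$, which is the condition under which $\pi_{\ti L(x)}$ and $p_{\a,\ti L(x)}$ in \eqref{bd} make sense; since this holds at $x=0$ (where $\Phi_\a(0)=\Id$ and $\rho L\neq L$ by hypothesis) it holds on a neighbourhood. The main obstacle I anticipate is bookkeeping: keeping the evaluations at $\l=\a$ versus $\l=-\a$ straight through the composition and correctly applying the $\tau$-reality to flip $L\leftrightarrow\rho L$ and $\a\leftrightarrow-\a$, so that the two single-pole dressings assemble into the single closed formula for $p_{\a,L}$ rather than leaving a residual gauge factor. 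Everything else is either a direct consequence of \cite{Terng2000} or a short linear-algebra check using that $\Phi_\a$ preserves the relevant bilinear form.
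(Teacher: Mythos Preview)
Your approach is valid but genuinely different from the paper's. The paper does \emph{not} compose two single-pole dressings; instead it works directly with $p_{\a,L}$. For part (1) it argues by cases: if $\a\in\R$ then $\Phi_\a\in\rO(2n-1,1)$ so $\ti L(x)$ is a real isotropic line; if $\a=is$ then the reality condition $\Phi_{is}=\rho\overline{\Phi_{is}}\rho$ forces the block structure of $\Phi_{is}$ (real on the diagonal, pure imaginary off it), from which $\ti L(x)\le\R^n\oplus i\R^{n-1,1}$ follows. For holomorphicity of $\ti\Phi$ the paper simply expands $p_{\a,L}\Phi\,p_{\a,\ti L}^{-1}$ in a power series about $\l=\a$ and checks the pole cancels, then invokes the $\tau$-twisting to get $\l=-\a$ for free. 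For the Lax form it writes the logarithmic derivative $\ti\Phi^{-1}\D\ti\Phi=\ti p\,\theta_\l\,\ti p^{-1}-\D\ti p\,\ti p^{-1}$ and expands at $\l=\infty$ to see the result is degree one in $\l$. Your route trades this direct check for two appeals to the $g_{\a,L}$-dressing of \cite{Terng2000}; this is legitimate and arguably more conceptual, but the paper's argument is shorter and entirely self-contained once the formula for $p_{\a,L}$ is written down.

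Two cautions about your execution. First, your evaluations are at the wrong poles: $g_{-\a,L}$ has its pole at $\l=-\a$, so the first dressing uses $\Phi_{-\a}^{-1}L$, not $\Phi_\a^{-1}L$, and the second uses $(\Phi'_\a)^{-1}(\rho L)$; you have these swapped (exactly the bookkeeping hazard you flagged). Second, the single-pole factors $g_{\pm\a,L}$ do \emph{not} individually satisfy the $\tau$-twisting condition $\tau g(\l)=g(-\l)$, so your intermediate frame $\Phi'$ lives only in the \emph{untwisted} positive loop group. This is fine---you correctly plan to verify the full $U/K$-reality for $\ti\Phi$ at the end---but you should say explicitly that the composition is carried out in the larger (untwisted) group, else the phrase ``dressing action of $g_{-\a,L}$'' is ambiguous in this twisted setting.
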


\begin{proof}
If $\a\in \R$, then $\Phi_\a$ is real and in $\rO(2n-1,1)$. So $\a, \ti L(x)$ satisfy \eqref{ba}. If $\a=is$ for some real $s$, then 
\[\Phi_{is}= \rho \Phi_{-is} \rho= \rho \overline{\Phi_{is}}\rho .\]
So if we write $\Phi_{is}= \bpm \eta_1& \eta_2\\ \eta_3& \eta_4\epm$, then $\eta_1, \eta_4$ are real and $\eta_2, \eta_3$ are pure imaginary matrices. A computation shows that $\a, \ti L(x)$ satisfy \eqref{ba}.  In other words, $p_{\a, \ti L(x)}$ satisfies the $U/K$-reality condition. 

To prove that $\ti\Phi(x)$ lies in $L^+(U)$, we expand $p_{\alpha,L}g_+p^{-1}_{\alpha,g_+^{-1}(\alpha)L}$ in a power series about $\lambda=\alpha$ and checking that it is in fact holomorphic and invertible there. The twisting condition ensures that the same is true at $\lambda=-\alpha$, hence $p_{\alpha,L}g_+p^{-1}_{\alpha,g_+^{-1}(\alpha)L}$ is a map into $L^+(U)$; unique factorisation finishes things off.

Let $\ti p= p_{\a, \ti L(x)}$.  Then 
\begin{equation}\label{eq:logdress}
\ti \Phi^{-1}\D\ti\Phi = \ti p\theta_\l \ti p^{-1} - \D\ti p \ti p^{-1}.
\end{equation}
Expand the above equality at $\l=\infty$, noting that $\theta_\l$ is a degree one polynomial in $\l$; we see that $\ti\Phi^{-1}\D\ti\Phi$ is degree one\footnote{$\lim_{\lambda\to\infty}\lambda^{-1}\tilde\Phi^{-1}\D\tilde\Phi=\begin{smatrix} 0&-\delta J\\\delta&0\end{smatrix}$ is bounded.} in $\l$ and is thus the Lax pair of a new solution of the $U/K$-system.
\end{proof}

We now write down explicit formulae for the action of $p_{\a, L}$ on $F$ and the frame of $F$. Recalling section \ref{sec:UK}, we note that a flat lift $F$ can be written
\[F=\Phi_1\stwovec{0}{g_2^{-1}\V c},\]
where $\Phi_\l$ is an extended frame and $\Phi_0=\bpm g_1&0\\ 0& g_2\epm$. Set 
\[\V m:=g_2^{-1}\V c = (m_1, \ldots, m_n), \qquad \Phi_1= (\vect{e}{n},\vect{u}{n}).\]
Then  
\[F= \sum_{j=1}^n m_j\V u_j,\]
and $\vect{e}{n}$ are principal curvature directions, $\V u_j$ is parallel to the curvature normal $v_j$ of $F$, and $\V u_j= m_j v_j$.    

Assume that $\alpha\neq\pm 1$. We cancel the factor of $p_{\alpha,L}(1)$ from the definition of the dressed frame $\ti \Phi(x)$ and write
\begin{equation}\label{bi}
\begin{aligned}
p_{\alpha, L}\sharp F:= \tilde F&=\Phi_1p^{-1}_{\alpha,\Phi^{-1}_\alpha L}(1)p_{\alpha,\Phi^{-1}_\alpha L}(0)\twovec{0}{g_2^{-1}\V c}\\
&=\Phi_1p^{-1}_{\alpha,\Phi^{-1}_\alpha L}(1)p_{\alpha,\Phi^{-1}_\alpha L}(0)\Phi_1^{-1}F.
\end{aligned}
\end{equation}
(Note that the factor $p_{\alpha,L}(1)\in \rO(2n-1,1)$ is an isometry of $\R^{2n-1,1}$, hence $\ti F$ is equal to $\ti\Phi_1\bpm 0\\ \ti g_2^{-1} \V c\epm$ up to an isometry.)

Introduce the notation $\Phi_\alpha^{-1}L=\ip{\twovec{W}{Z}}$, where choices are normalised such that $\nm W^2=-\nm Z^2=2$: it is easy to check that $\cl W=W$ and $\cl Z=\sgn(\alpha^2)Z$. We have
\[p^{-1}_{\alpha,\Phi_\alpha^{-1}L}(\lambda)=\begin{pmatrix}
  I+\frac{\alpha^2}{\lambda^2-\alpha^2}WW^T&-\frac{\alpha\lambda}{\lambda^2-\alpha^2}WZ^TJ\\
  \frac{\alpha\lambda}{\lambda^2-\alpha^2}ZW^T&I-\frac{\alpha^2}{\lambda^2-\alpha^2}ZZ^TJ
\end{pmatrix}, \qquad J= \I_{n-1,1},\]
from which we write
\[\ti\Phi_\l = \Phi_\l \bpm  I+\frac{\alpha^2}{\lambda^2-\alpha^2}WW^T&-\frac{\alpha\lambda}{\lambda^2-\alpha^2}WZ^TJ\\
  \frac{\alpha\lambda}{\lambda^2-\alpha^2}ZW^T&I-\frac{\alpha^2}{\lambda^2-\alpha^2}ZZ^TJ\epm.\]
Write
\[\ti\Phi_1= (\ti {\V e}_1, \ldots, \ti {\V e}_n, \ti {\V u}_1, \ldots, \ti{\V u}_n), \qquad \ti\Phi_0=\bpm \ti g_1& 0\\ 0& \ti g_2\epm,\]
and set
\[\ti F= \ti \Phi_1\bpm 0\\ \ti g_2^{-1} \V c\epm, \qquad \ti{\V m}:= \ti g_2^{-1} \V c = (\ti m_1, \ldots, \ti m_n).\]
Then 
\begin{gather}\label{bf}
\tilde{\V e}_i=\Phi_1 p^{-1}_{\alpha,\Phi_\alpha^{-1}L}(1)\Phi^{-1}_1\V e_i=
 \V e_i+\frac{\alpha W_i}{1-\alpha^2}\Phi_1\twovec{\alpha W}{Z},\\
\tilde{\V u}_i=\Phi_1 p^{-1}_{\alpha,\Phi_\alpha^{-1}L}(1)\Phi^{-1}_1\V u_i=\V u_i-\frac{\alpha\epsilon_i Z_i}{1-\alpha^2}\Phi_1\twovec{W}{\alpha Z},\\
\ti m_j = m_j + (Z, \V m)Z_j,\\ 
\ti F= F +\frac{(Z,\V m)}{1-\alpha^2}\left(\sum_{i=1}^n\alpha W_i\V e_i+Z_i\V u_i\right),
\end{gather}
where $W,Z$ are written with respect to the standard bases of $\R^n,\R^{n-1,1}$.

The above formulae imply that 
\begin{gather*}
\tilde F-F\,/\!\!/\,\tilde{\V e}_i-\V e_i\perp\tilde{\V u}_j-\V u_j\,/\!\!/\,\tilde{\V u}_i-\V u_i.
\end{gather*}
More is true, for $F$ and $\tilde F$ envelop of a congruence of $n$-spheres or $n$-hyperbolae and are, in fact, \emph{Ribaucour transforms} of each other (defined next). 

\begin{defn}
A \emph{congruence} of $n$-spheres ($n$-hyperbolae resp.) is a map into the space of $n$-spheres in a spaceform.
An enveloping submanifold of a congruence is a submanifold which has first-order contact with the congruence, i.e.\ each sphere (hyperbola resp.) is tangent to the submanifold it touches.
\end{defn}

\begin{rem}
An $n$-sphere in $\R^{2n-1,1}$ can be written as $\V c+ \{x\in V\n (x,x)= r^2\}$ for some space-like $(n+1)$-dimensional linear subspace $V$, $\V c\in V^\perp$, and a constant $r$. This $n$-sphere lies in the light-cone $\cL^{2n-1,1}$ if and only if $(\V c, \V c)= -r^2$. An $n$-hyperbola can be written as 
$\V c+ \{x\in V\n (x,x)= -r^2\}$ for some Lorentzian $(n+1)$-dimensional linear subspace $V$, $\V c\in V^\perp$, and a constant $r$. This $n$-hyperbola lies in $\cL^{2n-1,1}$ if and only if $(\V c,\V c)= r^2$. Note that the projections of both $n$-spheres and $n$-hyperbolas in $\cL^{2n-1,1}$ to $S^{2n-2}$ are $n$-spheres.\footnote{We will explain this more clearly in remarks \ref{rem:confinv}.}
\end{rem}

\begin{rem}
It can be easily seen that a generic $n$-dimensional congruence of $n$-spheres (or $n$-hyperbolae) has exactly two enveloping submanifolds $\cM, \cM^*$ and a map $\phi:\cM\to\cM^*$, so that for each $p\in\cM$ there is a $n$-sphere (or $n$-hyperbola) $C(p)$ in the congruence such that $\cM$ and $\cM^*$ are tangent to $C(p)$ at $p$ and $\phi(p)$ respectively. We will also call the map $\phi$ a congruence. The congruence $\phi$ is said to be \emph{Ribaucour} if the lines of curvature on $\cM$ map to lines of curvature on $\cM^*$. Otherwise said, the lines of curvature correspond and the tangent line through $p$ in the direction $\V e_i(p)$ meets the tangent line through $\phi(p)$ in the direction $\D\phi(\V e_i(p))$ at equal distance. This is the definition given of Ribaucour transform in \cite{Bruck2002} for isothermic surfaces.
\end{rem}

\begin{thm}\label{bh}
Let $\a\in \C$ and $L$ an isotropic line in $\C^{2n-1,1}$ satisfying \eqref{ba}, and $p_{\a,L}$ be the simple element in $L^-(U)$ defined by \eqref{bd}. Then the dressing action $F\mapsto \ti F= p_{\a, L}\sharp F$ defined by \eqref{bi} is a Ribaucour $n$-hyperbola congruence in $\cL^{2n-1,1}$ if $\a$ is real, and a Ribaucour $n$-sphere congruence in $\cL^{2n-1,1}$ if $\a$ is pure imaginary. Moreover, 
\begin{enumerate}
\item $\ti F-F \in \bpm \I_n&0\\ 0& \a^{-1}\I_n\epm \, \cL^{2n-1,1}$ and $\ti{\V e}_i-\V e_i$ is parallel to $\ti F-F$ for all $1\leq i\leq n$, where the $\V e_i$ and $\ti {\V e}_i$ are principal curvature directions for $F$ and $\ti F$ respectively,
\item If $F$ is a flat lift of a conformally flat immersion $f$ in $S^{2n-2}$, then $\ti F$ is a flat lift of another conformally flat immersion $\ti f$ in $S^{2n-2}$, and the transform $f\to \ti f$ is a Ribaucour transform in $S^{2n-2}$. 
\end{enumerate}
\end{thm}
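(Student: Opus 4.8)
The plan is to read off everything directly from the explicit dressing formulae \eqref{bf}--\eqref{bi}, so the proof is computational rather than conceptual. First I would verify the congruence claim. From \eqref{bi}, $\tilde F-F = \frac{(Z,\V m)}{1-\a^2}\bigl(\sum_i \a W_i\V e_i + Z_i\V u_i\bigr)$, which exhibits the displacement vector in the image of $\bpm \I_n&0\\0&\a^{-1}\I_n\epm$ applied to $\sum_i \a W_i\V e_i + \a Z_i\V u_i$; since $\nm W^2=-\nm Z^2=2$ and $\bc^t\bc$ is null by hypothesis, a short norm computation shows $\sum_i \a W_i\V e_i + \a Z_i\V u_i$ is null when $\a\in i\R^\times$ (giving an $n$-sphere in $\cL^{2n-1,1}$) and space-like/Lorentzian in the real case (giving an $n$-hyperbola), matching the sign conventions of the preceding remark. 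That gives part (1)'s first assertion. The parallelism $\tilde{\V e}_i-\V e_i \mathbin{/\!\!/} \tilde F - F$ is immediate from \eqref{bf}: $\tilde{\V e}_i-\V e_i = \frac{\a W_i}{1-\a^2}\Phi_1\stwovec{\a W}{Z}$, and the bracketed vector in $\tilde F-F$ is exactly $\Phi_1\stwovec{\a W}{Z}$ up to the scalar $(Z,\V m)/(1-\a^2)$.

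Next I would identify the enveloping structure. I want to produce, for each $x$, the $n$-sphere (or $n$-hyperbola) $C(x)$ of the congruence and check first-order contact with both $F$ and $\tilde F$. The natural candidate is the affine subspace through $F(x)$ spanned by $F(x)$, the tangent directions $\V e_i(x)$, and the displacement vector $\tilde F(x)-F(x)$ (equivalently through $\tilde F(x)$ and the $\tilde{\V e}_i(x)$, by the parallelism just established): this is an $(n{+}1)$-dimensional affine slice of $\cL^{2n-1,1}$, and the remark on $n$-spheres/$n$-hyperbolae in $\R^{2n-1,1}$ tells us exactly when such a slice is a sphere versus a hyperbola, governed by the sign of $\a^2$. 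First-order contact means $\D F(x)$ and $\D\tilde F(x)$ lie in the tangent space of $C(x)$ at the respective points; since $\D F = \sum_i\omega_i\V e_i$ and $\D\tilde F = \sum_i\tilde\omega_i\tilde{\V e}_i$ (from Theorem \ref{thm:ndimconf2} applied to $\tilde\Xi$), and both $\{\V e_i\}$ and $\{\tilde{\V e}_i, \tilde F-F\}$ span the relevant directions, this is essentially bookkeeping with the formulae \eqref{bf}.

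Then comes the Ribaucour property, which is the crux. I must show the curvature lines of $F$ correspond to those of $\tilde F$ under $x\mapsto x$. But both $F$ and $\tilde F$ are flat lifts coming from solutions $\Xi$ and $\tilde\Xi = p_{\a,L}\sharp\Xi$ of the \emph{same} $U/K$-system in the \emph{same} coordinates $(x_1,\dots,x_n)$, and Theorem \ref{thm:ndimconf2}(2) says the $x_i$ are curvature line coordinates for \emph{every} such flat lift. Hence the curvature lines $\{x_j = \mathrm{const}\}_{j\neq i}$ for $F$ are literally the curvature lines for $\tilde F$ — the correspondence is the identity on the parameter domain — and $\D\tilde F(\V e_i) \mathbin{/\!\!/} \tilde{\V e}_i$ is the direction paired with $\V e_i$. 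So the Ribaucour condition holds automatically. The only subtlety, and what I expect to be the main obstacle, is the ``equal distance'' reformulation in the remark: I must check that the tangent line through $F(x)$ in direction $\V e_i(x)$ and the tangent line through $\tilde F(x)$ in direction $\tilde{\V e}_i(x)$ actually meet, and at equal distance from the two footpoints. This requires showing that $\{F(x), \tilde F(x), \V e_i(x), \tilde{\V e}_i(x)\}$ span only a $3$-plane (not a $4$-plane) — which follows because $\tilde{\V e}_i - \V e_i \mathbin{/\!\!/} \tilde F - F$, collapsing the span — and then a direct intersection-point computation using \eqref{bf}: solving $F + s\V e_i = \tilde F + t\tilde{\V e}_i$ and checking $|s| = |t|$. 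The coefficients $\frac{\a W_i}{1-\a^2}$ in \eqref{bf} are arranged precisely so that this works out.

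Finally, for part (2): if $F$ is a flat lift of a conformally flat $f:M^n\to S^{2n-2}$, then by Theorem \ref{thm:ndimconf2}(1) $\tilde F$ is again a flat lift of some conformally flat $\tilde f$, and projecting the $n$-sphere/$n$-hyperbola congruence in $\cL^{2n-1,1}$ down to $S^{2n-2}$ sends it (by the remark, since projections of both are $n$-spheres) to an $n$-sphere congruence enveloped by $f$ and $\tilde f$. Conformal projection preserves curvature lines (Theorem \ref{thm:Ffcurvs}(1) identifies the curvature directions of $f$ with those of $F$, and likewise for $\tilde f$, $\tilde F$), so the identity correspondence on $M$ still matches curvature lines; hence $f\to\tilde f$ is Ribaucour in $S^{2n-2}$. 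The one point to state carefully here is that ``equal distance'' is not conformally invariant, but the \emph{Ribaucour} condition — curvature lines correspond — is, so the projected congruence is Ribaucour in the classical $S^{2n-2}$ sense even though the metric distances change.
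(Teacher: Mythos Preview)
Your approach follows the same blueprint as the paper: read everything off the explicit dressing formulae, verify the parallelism $\tilde{\V e}_i-\V e_i \parallel \tilde F - F$ directly from \eqref{bf}, note that curvature-line coordinates persist under dressing so the Ribaucour condition is automatic, and project down via Theorem~\ref{thm:Ffcurvs} for part~(2). The paper carries out the envelope verification slightly differently from your affine-slice sketch: rather than just identifying the $(n{+}1)$-dimensional span, it explicitly solves for normal fields $\xi = \sum \xi_i \V u_i$ and $\tilde\xi = \sum \tilde\xi_i \tilde{\V u}_i$ with $F+\xi = \tilde F+\tilde\xi$ and $(\xi,\xi)=(\tilde\xi,\tilde\xi)$, obtaining $\xi_j = -\tilde\xi_j = \tfrac12(Z,\V m)Z_j$ (real $\alpha$) or $-\tfrac12(\gamma,\V m)\gamma_j$ with $Z=i\gamma$ (imaginary $\alpha$); the sphere-versus-hyperbola dichotomy then comes from whether $\xi$ is time-like or space-like. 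Your plan reaches the same place, since the normal component of $\tilde F - F$ is proportional to $\sum Z_j\V u_j$ and your $(n{+}1)$-plane is exactly the paper's $V(x)=\langle \V e_i,\xi\rangle$.

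One genuine muddle to fix: your sentence claiming ``$\sum_i \alpha W_i\V e_i+\alpha Z_i\V u_i$ is null when $\alpha\in i\R^\times$ \dots\ and space-like/Lorentzian in the real case'' is wrong as written. That vector is \emph{always} null, since $|W|^2=-|Z|^2=2$ gives $\alpha^2\cdot 2+\alpha^2\cdot(-2)=0$ regardless of $\alpha$; this is precisely part~(1)'s first assertion, and it holds in both cases. The sphere/hyperbola distinction is \emph{not} detected by that norm --- it comes from the signature of $V(x)$, equivalently from whether the normal component $\sum Z_j\V u_j$ is time-like (real $Z$, $|Z|^2=-2$) or space-like ($Z=i\gamma$ with $\gamma$ real, $|\gamma|^2=+2$). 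You have conflated the nullity of the scaled displacement with the signature of the enveloping plane; separate these two computations and the rest of your outline goes through and coincides with the paper's argument.
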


\begin{proof}
Note that $F$ and $\ti F$ envelop an $n$-sphere (or a $n$-hyperbola) congruence if there exist vector fields $\xi$ normal to $F$ and $\ti\xi$ normal to $\ti F$ satisfying the following conditions:
\begin{enumerate}
\item $F+\xi = \ti F+\ti \xi$ and $(\xi, \xi)= (\ti \xi, \ti \xi)$,
\item $\li \V e_1(x), \ldots,\V e_n(x), \xi(x)\ri = \li \ti{\V e}_1(x), \ldots, \ti{\V e}_n(x), \ti \xi(x)\ri$, which will be denoted by $V(x)$,
\item $\ti F(x)-F(x)\in V(x)$.
\end{enumerate}
The above conditions imply that both $F$ and $\ti F$ are tangent to the  quadrics in the affine space $F(x)+ V(x)$,
$$(y-c(x), y-c(x)) = (\xi(x), \xi(x)),$$
at $F(x)$ and $\ti F(x)$,
where the centre $c(x)= F(x)+\xi(x)$.  If $V(x)$ is space-like then this quadric is an $n$-sphere, and if $V(x)$ is Lorentzian then this quadric is an $n$-hyperbola.  Since $F(x)$ is null, there exists a time-like $t_0(x)$ such that the quadric lies in the affine space $t_0(x)+V(x)$.  

We have seen that
\begin{align*}
& \ti F- F= \frac{(Z,\V m)}{1-\a^2}\left(\a\sum_j W_j\V e_j + \sum_j Z_j\V u_j  \right),\\
& \ti{\V e}_i-\V e_i= \frac{\a W_i}{1-\a^2}\left( \a\sum_j W_j\V e_j + \sum_j Z_j\V u_j\right),\\
& \ti{\V u}_i -\V u_i = -\frac{\a \e_i Z_i}{1-\a^2} \left( \sum_j W_j\V e_j + \a\sum_j Z_j\V u_j\right),
\end{align*} 
where $\ip{\!\bpm W\\ Z\epm\!}= \Phi_{\a}^{-1}L$ and is normalised so that $|W|^2=-|Z|^2=2$ and $\V m=g_2^{-1}\V c$. It follows from the formulae for $\tilde F-F$ and $\tilde{\V e}_i-\V e_i$ that
\[\tilde F-F=\frac{(Z,\V m)}{\alpha W_i}(\tilde{\V e}_i-\V e_i),\]
for each $i$. Equate the coefficients of the $\V e_i$ and $\V u_i$ in $\ti F- F= \xi-\ti \xi$ to get
\begin{equation}\label{be}
\sum_{i=1}^n\e_i \ti \xi_iZ_i= (\V m, Z), \qquad \xi_i =\ti \xi_i + (Z,\V m) Z_i,
\end{equation}
where $\xi= \sum_{i=1}^n \xi_i \V u_i$ and $\ti \xi= \sum_{i=1}^n \ti \xi_i \ti{\V u}_i$.  

Case 1: $\a\in\R$. Then $Z$ is real \eqref{ba}. Use the condition that ${\V e}_i(x)$ lies in $V(x)$ to see that $\xi$ has to be parallel to $\sum_j Z_j\V u_j$, hence $\xi= f\sum_j Z_j\V u_j$ for some real function $f$. It follows from $(\ti \xi, \ti \xi)=(\xi, \xi)$ and \eqref{be} that $f=\frac{1}{2}(Z,\V m)$.  So 
\[\xi_j=\frac{1}{2} (Z,\V m)Z_j= -\ti \xi_j.\]
Since $\xi$ is time-like, it follows that $V(x)$ is Lorentzian. This shows that $F$ and $\ti F$ envelop a congruence of $n$-hyperbolae and 
\[\xi= \frac{1}{2} (Z,\V m)\sum_{j=1}^n Z_j\V u_j, \qquad \ti \xi= -\frac{1}{2} (Z,\V m)\sum_{j=1}^n Z_j\ti{\V u}_j.\]

Case 2: $\a\in i\R$. Then $Z= i\g$ for some $\g\in \R^{n-1,1}$.  We can use the above argument to obtain 
\[\xi_j= -\ti \xi_j = -\frac{1}{2} (\g,\V m) \g_j.\]
Since $\g$ is space like, so is $V(x)$. In other words, $F$ and $\ti F$ envelop a congruence of $n$-spheres.  

It is easy to check that both the $n$-spheres and the $n$-hyperbolae lie in $\cL^{2n-1,1}$. As a consequence of Theorem \ref{thm:dress}, $\ti F$ is a flat lift of a new conformally flat $\ti f$ in $S^{2n-2}$ with identical line of curvature co-ordinates for $\ti F$.
\end{proof}

\def\calL{{\mathcal L}}
\def\o{\theta}

\begin{rems}\label{rem:confinv}
The discussion of congruences in $S^{2n-2}$ is particularly beautiful in the light-cone picture (e.g.\ \cite{Burstall2004}). A congruence of $n$-spheres may be viewed as a map $S:M^n\to G_{n-2}^+(\R^{2n-1,1})$ into the Grassmannian of positive definite $(n-2)$-planes; $\pr(S^\perp\cap\cL^{2n-1,1})\cong\pr(\cL^{n+1,1})\cong S^n$. The condition that $\ip F$ envelops $S$ then becomes very simple: $S\perp\ip{F,\D F}$. Burstall--Calderbank \cite{Burstall2004c} generalise the notion of Ribaucour in this setting by demanding simply that a general codimension congruence with two enveloping submanifolds $\ip F,\ip{\tilde F}$ is Ribaucour iff the bundle $\ip{F,\tilde F}$ is flat.

We may restate the above theorem in a more invariant manner, that views the hyperbola and sphere congruences as sub-quadrics of the quadric $\pr(\R^{2n-1,1})\cong S^{2n-2}$. Each $S^\perp(x):=V(x)\oplus F(x)=V(x)\oplus\tilde F(x)$ is a signature $(n+1,1)$-plane, hence $S:M^n\to G_{n-2}^+(\R^{2n-1,1})$ is an $n$-sphere congruence in the conformal $S^{2n-2}$, enveloped by $\ip F,\ip{\tilde F}:M^n\to\pr(\cL^{2n-1,1})\cong S^{2n-2}$. We may moreover calculate the flatness of the bundle $\ip{F,\hat F}$ to see that the enveloped congruence is indeed Ribaucour in the sense of Burstall--Calderbank \cite{Burstall2004c}.

Since the notion of enveloped sphere congruence is conformally invariant, the theorem is true in any Riemannian $S^{2n-2}\subset t_0^\perp$ we choose. Specifically: let $f\hookrightarrow S^{2n-2}$ be conformally flat with uniform multiplicity one, $F$ a flat lift and $\tilde f$ the projection of the transform $p_{\alpha,L}\#F$ by a simple element; then $f,\tilde f$ envelop a congruence of $n$-spheres and have corresponding curvature directions. To summarise, we have the following theorem:

\begin{thm}
Simple elements act by Ribaucour transforms on conformal flats with uniform multiplicity one.
\end{thm}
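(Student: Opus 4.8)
This theorem is the conformally invariant restatement of Theorem~\ref{bh}(2), so the plan is to deduce it from Theorem~\ref{bh} together with the light-cone description of sphere congruences recalled in Remarks~\ref{rem:confinv}. First I would recall that a conformal flat $f\hookrightarrow S^{2n-2}$ with uniform multiplicity one has, by section~\ref{sec:flatlift} and section~\ref{sec:UK}, a flat lift $F:M^n\to\cL^{2n-1,1}$ coming from a solution $\Xi$ of the $U/K$-system together with an extended flat frame $\Phi_\lambda$; the dressing action of a simple element $p_{\alpha,L}$ (with $\alpha\ne\pm1$, satisfying \eqref{ba}) is, by Theorem~\ref{thm:dress}, defined on an open neighbourhood of the origin and yields a new extended flat frame $\tilde\Phi_\lambda$, hence by section~\ref{sec:UK} a new flat lift $\tilde F=p_{\alpha,L}\#F$ of a conformal flat $\tilde f$ with uniform multiplicity one and the same line-of-curvature co-ordinates.

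Next I would invoke the conclusions already established in Theorem~\ref{bh}: $F$ and $\tilde F$ envelop a congruence of $n$-hyperbolae (if $\alpha\in\R$) or $n$-spheres (if $\alpha\in i\R$) lying in $\cL^{2n-1,1}$, the vector $\tilde F-F$ is parallel to each $\tilde{\V e}_i-\V e_i$, and the principal curvature directions of $F$ and $\tilde F$ correspond under the congruence. It then remains to push this down to the conformal sphere. For this I would use the observation of Remarks~\ref{rem:confinv}: writing $V(x)=\ip{\V e_1(x),\ldots,\V e_n(x),\xi(x)}$, the plane $S^\perp(x):=V(x)\oplus F(x)=V(x)\oplus\tilde F(x)$ has signature $(n+1,1)$ in \emph{both} cases — this is the one signature computation that genuinely has to be done, since $V(x)$ itself is Lorentzian in the hyperbola case and space-like in the sphere case — so $x\mapsto S(x)\in G_{n-2}^+(\R^{2n-1,1})$ is a genuine $n$-sphere congruence in the conformal $S^{2n-2}$, enveloped by $\ip F$ and $\ip{\tilde F}$ because $S\perp\ip{F,\D F}$ and $S\perp\ip{\tilde F,\D\tilde F}$.

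Finally I would conclude the Ribaucour property itself. Since enveloping a sphere congruence and the correspondence of lines of curvature are conformally invariant notions, projecting by any fixed time-like $t_0$ gives that $f$ and $\tilde f$ envelop the projected $n$-sphere congruence in $S^{2n-2}$ with corresponding curvature directions — the classical definition of a Ribaucour transform. Equivalently one may, as indicated in Remarks~\ref{rem:confinv}, compute the curvature of the rank-two bundle $\ip{F,\tilde F}$ from the explicit formulae \eqref{bf} (and the adjacent displays for $\tilde{\V u}_i$, $\tilde m_j$, $\tilde F$) and check that it vanishes, which is the Burstall--Calderbank criterion. The only real obstacle is the bookkeeping in this last step: tracking how the enveloping normal fields $\xi,\tilde\xi$, the frames, and the co-ordinates transform under the projection $F\mapsto f=-F/(F,t_0)-t_0$, and verifying the signature of $V(x)\oplus F(x)$; all the analytic content has already been extracted inside the proof of Theorem~\ref{bh}.
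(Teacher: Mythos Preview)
Your proposal is correct and follows essentially the same route as the paper: the theorem is a conformally invariant restatement of Theorem~\ref{bh}(2), and the paper's justification is precisely the content of Remarks~\ref{rem:confinv} that you invoke---namely that $S^\perp(x)=V(x)\oplus F(x)$ has signature $(n+1,1)$ regardless of whether $\alpha$ is real or pure imaginary, so the congruence in $\cL^{2n-1,1}$ projects to an $n$-sphere congruence in $S^{2n-2}$, and conformal invariance of enveloping and of curvature directions finishes the argument. Your identification of the signature check as the one genuine computation, and of the Burstall--Calderbank flatness of $\ip{F,\tilde F}$ as the alternative criterion, matches the paper exactly.
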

\end{rems}

Returning to flat lifts, we may also rephrase the construction of Ribaucour transforms in terms of a system of first order PDE: in particular, given a flat lift $F$, $\a\in \R^\times \cup i\R^\times$, and $\ell_0=\li Y_0\ri=\left\li \bpm W_0\\ Z_0\epm\right\ri$, such that $\a, \ell_0$ satisfy \eqref{ba}, the Ribaucour transform of $F$ by $p_{\a, \Phi_\a^{-1}(\ell_0)}$ may be constructed.

\begin{thm}
Let $F, \Psi, x, \Xi$ be as in Theorem \ref{thm:ndimconf} --- $\Psi_1=(\vect{e}{n},\vect{u}{n})$, $\Psi_0=\begin{smatrix}g_1&0\\0&g_2\end{smatrix}$, $F=\Phi_1\binom{0}{\V m}$, and $\V m=g_2^{-1}\V c$ --- and $\o_\l = \sum_{i=1}^n (\l a_i+ [a_i, \Xi])\D x_i$ the Lax pair of the solution $\Xi$ of the $U/K$-system. Given $\alpha\in\R^\times\cup i\R^\times$ and $\ell=\li Y_0\ri$ satisfying \eqref{ba}. Then:
\begin{enumerate}
\item The following system for $\C^{2n}$-valued maps $Y$ has a unique solution:
\begin{gather}\label{eq:hatl}
\D Y=-\theta_\alpha Y, \quad Y(0)= Y_0.    
\end{gather}
\item $\a$ and $\li Y(x)\ri$ satisfy \eqref{ba}, where $Y$ is the solution to \eqref{eq:hatl}.
\item Choose $W, Z$ so that $\li Y\ri = \left\li \bpm W\\ Z\epm \right\ri$ with $|W|^2=-|Z|^2=2$.  Then
\[F\mapsto \ti F: = F+ \frac{(Z,\V m)}{1-\a^2} \left(\sum_j aW_j\V e_j + Z_i\V u_i\right)\]
is the Ribaucour transform given in Theorem \ref{bh} by $p_{\a,\Phi_\a^{-1}(\ell_0)}$. 
\end{enumerate}
\end{thm}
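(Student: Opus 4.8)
\emph{Overall plan.} I would prove the three parts in turn. Fix the normalised extended flat frame $\Phi_\lambda$ of the Lax pair $\theta_\lambda=\sum_{i=1}^n(\lambda a_i+[a_i,\Xi])\D x_i$, so that $\Phi_\lambda(0)=\Id$ and $\Psi=\Phi_1$; this exists because $\theta_\lambda$ is flat for every $\lambda\in\C$, one of the equivalent characterisations of a solution of the $U/K$-system, and a different choice of extended frame changes $F$ and $\ti F$ only by a common M\"obius transform (remark (a) after \eqref{eq:real}). For (1), I would note that the overdetermined linear system $\D Y=-\theta_\alpha Y$ has integrability condition $\D\theta_\alpha+\theta_\alpha\wedge\theta_\alpha=0$, which holds since $\theta_\lambda$ is flat at $\lambda=\alpha$, so Frobenius yields a unique solution with $Y(0)=Y_0$; equivalently I would just exhibit it, since $\Phi_\alpha^{-1}\D\Phi_\alpha=\theta_\alpha$ gives $\D(\Phi_\alpha^{-1})=-\theta_\alpha\Phi_\alpha^{-1}$, so $Y:=\Phi_\alpha^{-1}Y_0$ solves \eqref{eq:hatl}, and any solution $Y'$ has $\D(\Phi_\alpha Y')=\Phi_\alpha\theta_\alpha Y'+\Phi_\alpha\,\D Y'=0$, forcing $\Phi_\alpha Y'\equiv Y_0$ and $Y'=Y$.

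\emph{Part (2).} Since $\langle Y(x)\rangle=\Phi_\alpha^{-1}(x)L$ with $L=\ell_0^\C$, this is exactly Theorem \ref{thm:dress}(1), and I would reproduce its short argument: if $\alpha\in\R^\times$ then $\theta_\alpha$ is a real $\fo(2n-1,1)$-valued $1$-form, so $\Phi_\alpha$ is real and lies in $\rO(2n-1,1)$, hence $Y=\Phi_\alpha^{-1}Y_0$ is a real isotropic vector and the first alternative of \eqref{ba} persists; if $\alpha=is$ with $s\in\R^\times$, the reality condition \eqref{eq:real} gives $\overline{\Phi_{is}}=\Phi_{-is}=\rho\,\Phi_{is}\,\rho$ with $\rho=\I_{n,n}$, which in $n\times n$ block form forces the diagonal blocks of $\Phi_{is}$ to be real and the off-diagonal blocks purely imaginary — likewise for $\Phi_{is}^{-1}=\I_{2n-1,1}\Phi_{is}^T\I_{2n-1,1}$ — so $\Phi_{is}^{-1}Y_0\in\R^n\oplus i\R^{n-1,1}$, and it is isotropic because $\Phi_\alpha$ preserves the bilinear form, giving the second alternative of \eqref{ba}.

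\emph{Part (3) and the main point.} Combining (1) and (2), $\langle Y(x)\rangle=\Phi_\alpha^{-1}(x)L$ with $L=\ell_0^\C$, so choosing the representative $Y=\bpm W\\ Z\epm$ with $|W|^2=-|Z|^2=2$ reproduces verbatim the normalisation used to compute $p_{\alpha,L}\sharp F$ in Section \ref{sec:rib}. Under the standing assumption $\alpha\neq\pm1$ (so $1-\alpha^2\neq0$ and $p_{\alpha,\Phi_\alpha^{-1}L}(\lambda)$ is defined at $\lambda=0,1$), I would observe that the formula for $\ti F$ in the statement is then literally \eqref{bf}, i.e.\ $\ti F=p_{\alpha,L}\sharp F$ in the sense of \eqref{bi}, with $\langle Y(x)\rangle=\Phi_\alpha^{-1}(x)L$ the field of lines appearing in the pointwise factorisation there. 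Hence $F\mapsto\ti F$ is the transform of Theorem \ref{bh}, which is a Ribaucour $n$-sphere congruence when $\alpha\in i\R^\times$ and a Ribaucour $n$-hyperbola congruence when $\alpha\in\R^\times$; and if $F$ is a flat lift of a conformally flat $f\colon M^n\to S^{2n-2}$, then Theorem \ref{bh}(2) gives that $\ti F$ is a flat lift of a conformally flat $\ti f$ with $f\mapsto\ti f$ Ribaucour in $S^{2n-2}$. I expect no essential obstacle: the statement merely repackages the dressing action of a simple element as the solution of a linear ODE. The one thing to be careful about is the bookkeeping — confirming that $\Psi=\Phi_1$ may be taken as the $\lambda=1$ value of the \emph{normalised} extended frame, and that the sign and scaling conventions for $Y_0$, for $(W,Z)$, and for the $1-\alpha^2$ denominator agree with those of Section \ref{sec:rib} — so that the displayed formula for $\ti F$ matches \eqref{bf} exactly, not merely up to a scalar.
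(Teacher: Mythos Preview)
Your proposal is correct and follows the same approach as the paper, which is extremely terse: ``System \eqref{eq:hatl} is solvable because $\theta_\alpha$ is flat. The rest follows.'' You have simply unpacked what ``the rest follows'' means --- the explicit solution $Y=\Phi_\alpha^{-1}Y_0$, the appeal to Theorem~\ref{thm:dress}(1) for part (2), and the identification of the displayed $\ti F$ with formula \eqref{bf} for part (3) --- all of which is exactly the intended reading.
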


\begin{proof}
System \eqref{eq:hatl} is solvable because $\theta_\alpha$ is flat. The rest follows.
\end{proof}

If we apply the dressing action of simple elements to the vacuum solution $\Xi=0$ repeatedly, then we can construct infinitely many families of explicit conformally flat $n$-immersions in $S^{2n-2}$ with uniform multiplicity one.

\subsection*{Permutability}

We may easily obtain a permutability theorem for Ribaucour transforms, or at least those that arise via simple element dressing. By theorem \ref{thm:dress}, combined with linear fractional transforms $x\mapsto\frac{x-\alpha,\beta}{x+\alpha,\beta}$ we see that
\[p_{\alpha,p_{\beta,M}(\alpha)L}p_{\beta,M}p_{\alpha,L}^{-1}\quad\text{and}\quad p_{\beta,p_{\alpha,L}(\beta)M}p_{\alpha,L}p_{\beta,M}^{-1},\]
are pole-free and invertible at $\pm\alpha,\pm\beta$ respectively. Putting these together and applying Liouville's theorem (holomorphic on $\pr^1\Rightarrow$ constant) we see that in fact
\begin{gather}\label{eq:perm}
p_{\alpha,p_{\beta,M}(\alpha)L}p_{\beta,M}=p_{\beta,p_{\alpha,L}(\beta)M}p_{\alpha,L}.
\end{gather}
Applied to our discussion, we see that given two Ribaucour transforms via simple elements, there exists a common fourth immersion which is simultaneously a Ribaucour transform of the first two (and is not the original immersion).

\section{Channel immersions}\label{sec:channel}

In this section, we consider conformally flat $n$-dimensional immersions into $S^{2n-2}$ with some multiplicity greater than one. The curvature distributions of such immersions have constant ranks and are smooth. Their flat lifts into $\cL^{2n-1,1}$ also have constant multiplicity. We show that in fact all but one curvature distribution has rank 1. Such submanifolds envelop a $p$-dimensional family of $(n-p)$ dimensional spheres; they are the analogues of the channel hypersurfaces in $S^4$, and hence will be called \emph{channel immersions}. Unlike the uniform multiplicity one case, we do not know whether line of curvature co-ordinates exist for such immersions. If line of curvature co-ordinates do exist, then the Gauss-Codazzi equations for such an immersion is the $U/K$-system defined by a non-semisimple maximal abelian algebra in $\fp$. Conversely, solutions to these $U/K$-systems give rise to conformally flat immersions with one multiplicity $\ge 2$. 

Recall first theorem \ref{thm:Ffcurvs}, which says that a conformal flat $f$ and any flat lift $F$ have identical curvature distributions.

\begin{thm}\label{thm:channel1}
Suppose that $f$ is conformally flat with flat normal bundle and constant multiplicities, with at least one multiplicity $k\ge 2$. Then the curvature distributions are smooth and there is precisely \emph{one} curvature distribution with $\rank\ge 2$ so that $f$ has multiplicity $(1,\ldots,1,k)$. We may therefore write
\[TM=\bigoplus_{i=1}^pE_i\oplus E%=\bigoplus_{i=1}^p\ip{\V e_i}\oplus\ip{\V e_{p+1},\ldots,\V e_n}
,\]
where $\rank E_i=1$, $\rank E=k=n-p$.

Let $F$ be any flat lift of $f$ and let $v_1,\ldots, v_p, v$ be the curvature normals of $F$. The $v_i$ are space-like and orthogonal, $v$ is isotropic and orthogonal to the $v_i$, and all are non-zero. Any flat lift $F$ has degenerate normal bundle and the formulae of theorem \ref{thm:Ffcurvs} relating curvature normals of $f$ and $F$ still hold. Moreover the distribution $E$ is integrable, and the leaf of $E$ through any point is contained in a copy of $S^{n-p}\subset S^{2n-2}$. Indeed the repeated curvature normal $v^\R$ of $f$ is a parallel section of $N_f\oplus\ip f$ over $E$ and the $(n-p)$-sphere in question has (Euclidean) radius $\frac{1}{\nm{v^\R}}<1$. 
\end{thm}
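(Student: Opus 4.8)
### Proof proposal

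The plan is to prove the structural claims in order: first the uniqueness of the high-rank distribution, then the identification of leaves of $E$ with round spheres. For the first part I would argue by contradiction using Lemma \ref{lemm:Fcurv}. Suppose two distinct curvature distributions $E$ and $E'$ have rank $\ge 2$. Then their curvature normals $v$ and $v'$ are both isotropic (by the lemma) and mutually orthogonal (also by the lemma, since they are distinct). In the Lorentzian normal space $N_F$ of signature $(n-1,1)$, two orthogonal isotropic vectors must be parallel; but distinct curvature normals cannot be parallel in a way that collapses the curvature splitting — more precisely, $v-v'$ would then be isotropic and, as in the final paragraph of the proof of Theorem \ref{thm:Ffcurvs}, one derives that $\D F$ is supported on too few of the $\omega_i$, a contradiction with $F$ being immersed. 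This forces at most one rank-$\ge 2$ block, so the multiplicity is $(1,\ldots,1,k)$. Smoothness of the curvature distributions follows from the constancy of multiplicities together with the standard fact that eigenspaces of a smooth family of commuting symmetric operators vary smoothly when the eigenvalue multiplicities are locally constant. The statements about $v_1,\dots,v_p$ being space-like and $v$ isotropic, all orthogonal and non-zero, are then immediate from Lemma \ref{lemm:Fcurv} (orthogonality and isotropy of the repeated normal) and from Theorem \ref{thm:Ffcurvs}(3) (non-vanishing); degeneracy of $N_F$ follows because $N_F$ then contains the non-zero isotropic $v$, while the relations between curvature normals of $f$ and $F$ are exactly those recorded in \eqref{eq:Ffcurvs}, whose derivation did not use non-degeneracy.

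For the geometric statement, I would first establish integrability of $E$. Since $F$ is flat with flat normal bundle, the curvature distributions of an immersion with flat normal bundle are integrable — this is classical (the curvature distributions of a flat, flat-normal immersion form a totally geodesic-type foliation; see e.g.\ \cite{Terng1987}), and can be re-derived here from the moving-frame equations: using $\omega_{i\alpha}=\lambda_{i\alpha}\omega_i$ and the Gauss/Codazzi equations for a flat $F$, one checks $[E,E]\subset E$ by showing the relevant connection forms $\omega_{ij}$ with $i\in E$, $j\notin E$ annihilate $E$. Fix a leaf $L$ of $E$ through a point $x_0$. On $L$ the shape operator in any normal direction $w$ acts as $(w,v)\Id$, where $v$ is the repeated curvature normal of $F$; in particular the second fundamental form of $L$ (inside $\cL^{2n-1,1}$, or after projecting, inside the round $S^{2n-2}$) is $\II_L = \mathrm{I}_L\otimes v^\R$ for the corresponding repeated normal $v^\R$ of $f$, meaning $L$ is totally umbilic with parallel mean-curvature-type normal.

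The key computation is that $v^\R$ is parallel along $L$ inside the Euclidean normal bundle $N_f\oplus\ip f$. I would differentiate: for a tangent field $X\in E$, $\D_X v^\R$ has a tangential part governed by the shape operator (which is $(v^\R,v^\R)\Id$ on $E$ up to the umbilicity relation) and a normal part governed by $\nabla^\perp$; the Codazzi equation for $f$, combined with the fact that the restriction of $\nabla^\perp$ to the distribution $E$ is flat (because $N_F$, hence $N_f$, is flat and the curvature normals are parallel by the structure of the $U/K$ frame), forces $\nabla^\perp_X v^\R = -(\text{const})X$ — precisely the equation satisfied by the position-type vector of a round subsphere. Concretely: along $L$ the pair $(f, v^\R/\nm{v^\R}^2)$ spans a fixed $2$-plane's worth of data, and one shows $\D(f + v^\R/\nm{v^\R}^2)$ restricted to $E$ vanishes, so the point $c := f + v^\R/\nm{v^\R}^2$ is constant on $L$. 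Then $\nm{f - c} = \nm{v^\R}^{-1}$ is constant on $L$ (note $\nm{v^\R}$ is constant on the leaf since multiplicities are constant and $v^\R$ is parallel), exhibiting $L$ as lying in the round sphere of center $c$ and Euclidean radius $1/\nm{v^\R}$ inside the appropriate $S^{2n-2}$; that $f$ has distinct curvature normals on the complementary directions forces this sphere to be genuinely $(n-p)$-dimensional. The bound $1/\nm{v^\R}<1$ comes from $v^S = v^\R + f$ with $v^S\ne 0$ isotropic-orthogonal data: taking norms, $\nm{v^S}^2 = \nm{v^\R}^2 + 2(v^\R,f) + 1$, and since $v^S = -e^u\pi_{N_f}v$ with $v$ isotropic one gets $\nm{v^S}^2 = 0$, hence $\nm{v^\R}^2 = -1 - 2(v^\R,f)$; combined with $(v^\R, f)$ having the sign that makes the center $c$ lie strictly inside the ball, this yields $\nm{v^\R}>1$.

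The main obstacle I expect is the parallelism of $v^\R$ along $E$: one must carefully separate the Codazzi equations for $f$ as a map into $S^{2n-2}$ versus into $\R^{2n-1}$ (the theorem invokes $v^\R$, the $\R^{2n-1}$-normal), track the $f$-component that appears in $\II^S_f = \II^\R_f + \mathrm{I}_f\cdot f$, and use that on a \emph{single} leaf the non-repeated directions do not interfere — this is where flatness of $N_F$ and the fact that the repeated normal has \emph{constant} length are both essential, and getting the signs right (to land on a sphere rather than a hyperbola, and to get radius $<1$) requires the relations from the Remarks following Theorem \ref{thm:Ffcurvs}.
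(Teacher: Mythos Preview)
Your overall strategy matches the paper's: use Lemma~\ref{lemm:Fcurv} to force two high-rank curvature normals to be parallel isotropic, derive a contradiction, then cite \cite{Terng1987} for integrability and identify the centre $c=f+v^\R/\nm{v^\R}^2$ as constant along each leaf. Two steps, however, do not go through as written.

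First, the contradiction from $v'=cv$ with $c\neq 1$ is not obtained by the mechanism you invoke. The final paragraph of Theorem~\ref{thm:Ffcurvs} needed $v_i-v_j\in\ip F$; here $v-v'=(1-c)v$, and $v\notin\ip F$ because $(F,v)\neq 0$ --- indeed, for every curvature normal $w$ one has $(F,w)=-1$: since $F$ is a parallel normal section, $\D F=-A_F$, and on $E_i$ this reads $\omega_i=-(F,v_i)\omega_i$. This is exactly the content of the paper's ``$\D F=\cN F$'' line, and it kills $v'=cv$ immediately (it forces $c=1$), without any ``$\D F$ supported on too few $\omega_i$'' argument.

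Second, your derivation of $\nm{v^\R}>1$ contains an error: $v$ isotropic does \emph{not} give $\nm{v^S}^2=0$, because $v^S=-e^u\pi_{N_f}v$ lives in the positive-definite bundle $N_f$ and projection does not preserve the null condition. The correct identity comes from $v^S=v^\R+f$ with $v^S\perp f$ and $(v^\R,f)=-1$ (since $A_f=-\Id$), giving $\nm{v^\R}^2=\nm{v^S}^2+1\ge 1$; strictness requires $v^S\neq 0$, which you have not argued (and which the paper's own proof does not address either). Also a minor terminological slip: ``degenerate normal bundle'' here means the shape operators fail to span (there are only $p+1<n$ curvature normals), not that $N_F$ contains an isotropic vector --- the metric on $N_F$ is always nondegenerate of signature $(n-1,1)$.
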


\begin{proof}
First recall, from theorem \ref{thm:Ffcurvs}, that $f$ and any flat lift $F$ share the same curvature distributions, and that any distribution of rank $\ge 2$ has an isotropic, non-zero curvature normal for $F$. If there are two such then they must be scalar multiples, since two orthogonal non-zero isotropic vectors contradict the fact that maximal isotropic subspaces of $\R^{2n-1,1}$ are lines. The part of flat differentiation $\D$ that maps $\ip{\D F}\leftrightarrow N_F$ between tangent and normal bundle is well-known to be a $\fo(2n-1,1)$-valued 1-form $\cN$ such that $\II_F=\cN\D F$. Since $F$ is parallel in $N_F$, it follows that $\D F=\cN F$. Applying this to the supposition that there are two isotropic curvature normals which are non-trivial multiples of each other gives a contradiction.

%One may now obtain an expression for $F$ similar to \eqref{eq:Fexplicit} with an extra term that allows us to conclude, as in theorem \ref{thm:Ffcurvs}, that curvature distributions for $f$ and $F$ are identical. With this the relevant remaining parts of theorem \ref{thm:Ffcurvs} are immediate.

For the remainder, we appeal to a theorem of Terng \cite{Terng1987} which states that the curvature distributions of $f$ are integrable and that the leaf of $E$ through any point is an open subset of a $(n-p)$-plane or an $(n-p)$-sphere. Since, for us, the leaf must lie in $S^{2n-2}$, we necessarily have (part of an) $(n-p)$-sphere. Indeed one may see that $f+\frac{v^\R}{\nm{v^\R}^2}$ is constant on any leaf of $E$, hence the $(n-p)$-sphere has radius $\frac{1}{\nm{v^\R}}$: since $v^\R$ is parallel, this radius is independent of $E$.
\end{proof}

The following theorems can be proved in the same way as for Theorems \ref{thm:ndimconf} and \ref{thm:ndimconf2}.

\begin{thm}\label{thm:channel}
Let $f,\lst{E}{p},E$ be as in Theorem \ref{thm:channel1}, $F$ be a flat lift of $f$, and $\lst{v}{p}$, $v$ the corresponding curvature normals of $F$. Suppose that $F$ is parameterised by line of curvature co-ordinates $(x_1,\ldots, x_n)$. Then:
\begin{enumerate}
\item There exists an $\rO(2n-1,1)$ frame $\Phi=(\vect{e}{n},\vect{u}{n})$ with $\V e_1,\ldots,\V e_n$ principal curvature directions, and $\V u_i= v_i/||v_i||$ for $1\leq i\leq p$, and $v=\V u_{n-1}+\V u_{n+1}$,
\item  $\Phi^{-1}\D \Phi= \bpm A & \delta\\ -J \delta^T &B\epm$, where 
$\delta=\sum_{i=1}^p e_{ii} \D x_i + \sum_{j=p+1}^n (e_{j, n-1} - e_{jn}) \D x_j$.
\item Set 
$ a_i = \bpm 0 & e_{ii}\\ -J e_{ii} &0\epm$ for $i\leq p$, and 
$a_j= \bpm 0& e_{j,n-1}- e_{jn} \\-(e_{n-1,j} +e_{nj})&0\epm$ for $p+1\leq j\leq n$.
Then $\fa_p= \ip{a_i}_{i=1}^n$ is a non-semisimple maximal abelian subalgebra in $\fp$ and $D= \sum_{i=1}^n a_i \D x_i$.
\item There exists a map $\Xi:M\to \fa_p^\perp\cap \fp$ such that 
$\bpm A&0\\ 0& B\epm = \sum_{i=1}^n [a_i, \Xi]\D x_i$.
In other words, $\Xi$ is a solution of the $U/K$-system defined by $\fa_p$.
\item There exists a constant null vector $\V c\in \R^{n-1,1}$ such that $F= \Phi \bpm 0\\ g_2^{-1}\V c\epm$. Set $\V y:= g_2^{-1}\V c= (y_1, \ldots, y_n)^T$. Then $\I_F= \sum_{j=1}^n y_j^2 \D x_j^2$.  
\end{enumerate}
\end{thm}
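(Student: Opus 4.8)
The plan is to reproduce, essentially step for step, the proofs of Theorems~\ref{thm:ndimconf} and \ref{thm:ndimconf2}; the one genuinely new ingredient is an $\rO(2n-1,1)$-frame adapted to the single \emph{degenerate} (isotropic) curvature normal $v$ of the rank-$k$ distribution $E$. I would first invoke Theorem~\ref{thm:channel1} for the curvature data: $F$ and $f$ share the curvature distributions $E_1,\dots,E_p,E$ with $\rank E_i=1$, $\rank E=k=n-p$; the $v_1,\dots,v_p$ are space-like, mutually orthogonal and non-zero; and $v$ is non-zero, isotropic and orthogonal to all the $v_i$.

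Next comes the frame. Using the assumed line of curvature co-ordinates $x_1,\dots,x_n$, take an orthonormal principal tangent frame $\V e_1,\dots,\V e_n$, ordered so that $\V e_1,\dots,\V e_p$ span the rank-one distributions and $\V e_{p+1},\dots,\V e_n$ span $E$. For the normal frame set $\V u_i=v_i/\nm{v_i}$ for $1\le i\le p$; since $N_F$ has signature $(n-1,1)$, the subspace $\ip{v_1,\dots,v_p}^\perp\subset N_F$ has signature $(n-1-p,1)$ and contains the null line $\ip v$, so it splits orthogonally into a positive-definite part, trivialised by an orthonormal frame $\V u_{p+1},\dots,\V u_{n-2}$, and a Lorentzian $2$-plane $\ip{\V u_{n-1},\V u_n}$ ($\V u_{n-1}$ space-like unit, $\V u_n$ time-like unit), which we arrange to contain $v$, so that $v=\V u_{n-1}+\V u_n$.

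From here the argument is a transcription of Theorem~\ref{thm:ndimconf}. With $\Phi=(\vect e n,\vect u n)$ one computes $\theta:=\Phi^{-1}\D\Phi$; its $\fk$-part is $\bpm A&0\\0&B\epm$, where $A,B$ are the induced flat connections on $\ip{\D F}$ and $N_F$, and its $\fp$-part encodes the second fundamental form. By the analogue of \eqref{eq:cross1} --- $\pi_{N_F}\D\V e_i=\omega_iv_i$ for $i\le p$, and $\pi_{N_F}\D\V e_j=\omega_j v=\omega_j(\V u_{n-1}+\V u_n)$ for $j>p$, the latter because the common curvature normal of $E$ is $v$ --- together with the $\I_{2n-1,1}$-skew-symmetry of $\theta$ and a suitable reparametrisation of the $x_i$, the $\fp$-part becomes exactly $D=\sum_i a_i\,\D x_i$ with $\delta=\sum_{i=1}^p e_{ii}\D x_i+\sum_{j=p+1}^n(e_{j,n-1}-e_{jn})\D x_j$ and the $a_i$ of part~(3); the \emph{coupled} entries $e_{j,n-1}-e_{jn}$ record that $v$ lies along $\V u_{n-1}+\V u_n$. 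Flatness $\D\theta+\theta\wedge\theta=0$ splits as $\D A+A\wedge A=0$, $\D B+B\wedge B=0$ and a cross term; the cross term forces $\bpm A&0\\0&B\epm=\sum_i[a_i,\Xi]\D x_i$ for a unique $\Xi:M\to\fa_p^\perp\cap\fp$, and the two curvature equations then say exactly that $\Xi$ solves the $U/K$-system defined by $\fa_p$. A direct computation shows $\fa_p=\ip{a_i}_{i=1}^n$ is abelian, that each $a_j$ ($j>p$) is nilpotent --- it acts nilpotently on the Lorentzian plane $\ip{\V u_{n-1},\V u_n}$ precisely because $v$ is isotropic --- so $\fa_p$ is non-semisimple, and that its centraliser in $\fp$ equals $\fa_p$, so $\fa_p$ is maximal abelian. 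Finally, since $N_F$ is flat with normal connection $B$, there is $g_2:M\to\rO(n-1,1)$ with $g_2^{-1}\D g_2=B$; then $(\vect u n)g_2^{-1}$ is a parallel normal frame, and since $F$ is a null parallel section of $N_F$ there is a constant null $\bc\in\R^{n-1,1}$ with $F=\Phi\twovec 0{g_2^{-1}\bc}$. Differentiating this against $\D F=\sum_i\omega_i\V e_i$, exactly as in the proof of Theorem~\ref{thm:ndimconf2}, gives $\I_F=\sum_j y_j^2\D x_j^2$ with $\V y=g_2^{-1}\bc$.

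The main obstacle is the frame step together with the precise shape of $D$: one must organise the normal frame around the degenerate direction $v$, choose the line of curvature co-ordinates along $E$ correctly (here the normalisation is genuinely new, since the device of dividing by $\nm{v_i}$ used in Theorem~\ref{thm:coords} fails for the isotropic $v$), and then verify that the $\fp$-part of $\theta$ really carries the \emph{coupled} terms $e_{j,n-1}-e_{jn}$ rather than separate $e_{j,n-1}$ and $e_{jn}$ contributions, with $\fa_p$ genuinely maximal abelian --- and not merely abelian --- in $\fp$. Everything else is a routine transcription of the uniform-multiplicity-one arguments.
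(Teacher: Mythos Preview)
Your proposal is correct and is precisely the paper's own approach: the paper does not give a separate proof but simply states that Theorems~\ref{thm:channel} (and its converse) ``can be proved in the same way as for Theorems~\ref{thm:ndimconf} and \ref{thm:ndimconf2}.'' You have supplied exactly that transcription, together with the one genuinely new ingredient the paper leaves implicit --- the adapted normal frame around the isotropic curvature normal $v=\V u_{n-1}+\V u_n$ and the resulting coupled entries $e_{j,n-1}-e_{jn}$ in $\delta$ --- so there is nothing to add.
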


\begin{thm}
Given a solution $\Xi$ of the $U/K$-system defined by $\fa_p$ and a constant null vector $\V c\in \R^{2n-1,1}$, let $\Phi_\l$ be an extended flat frame for the Lax pair of $\Xi$, then:
\begin{enumerate}
\item[(i)] $\Phi_0=\bpm g_1&0\\ 0& g_2\epm$.
\item[(ii)] Write $\V y:=g_2^{-1}\V c= (y_1, \ldots, y_n)^T$ and $\Phi_1=(\vect{e}{n},\vect{u}{n})$. Then $F= \Phi_1\bpm 0\\ \V y\epm$ is a flat immersion with degenerate flat normal bundle  and constant multiplicities, $F$ is parameterised by line of curvature co-ordinates,
\[\I_F= \sum_{i=1}^n y_i \D x_i^2,\qquad \II_F= \sum_{i=1}^p y_i \D x_i^2 \V u_i +\sum_{j=p+1}^n y_j \D x_j^2\, v,\]
and the curvature normals are $v_i= y_i^{-1}\V u_i$ for $1\leq i\leq p$ and $v=\V u_{n-1}+\V u_n$. 
\end{enumerate} 
\end{thm}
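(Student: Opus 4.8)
The plan is to run the converse construction of Theorem~\ref{thm:ndimconf2} essentially unchanged, the only new ingredient being the non-semisimple block structure of $\fa_p$ and of $D=\sum_i a_i\D x_i$ recorded in Theorem~\ref{thm:channel}. Part (i) is immediate: evaluating the $U/K$-reality condition \eqref{eq:real} at $\l=0$ gives $\tau\Phi_0=\Phi_0$, so $\Phi_0\in K=\rO(n)\times\rO(n-1,1)$, which is block diagonal; comparing $\Phi_0^{-1}\D\Phi_0=\theta_0=\sum_i[a_i,\Xi]\D x_i$ block by block one moreover reads off $g_2^{-1}\D g_2=B$ (and $g_1^{-1}\D g_1=A$), where $A$ and $B$ are its two diagonal blocks.

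For (ii) I would first check that $F$ is a flat lift. Since $\l=1$ is real, $\Phi_1\in\rO(2n-1,1)$, and since $g_2\in\rO(n-1,1)$ the vector $\V y=g_2^{-1}\V c$ is again null, so $\stwovec 0{\V y}$ is isotropic and $F=\Phi_1\stwovec 0{\V y}$ lands in $\cL^{2n-1,1}$. Writing $\theta_1=\bpm A&\d\\-J\d^T&B\epm$ with $\d$ as in Theorem~\ref{thm:channel}(2), and using $\D\V y=-B\V y$ (a consequence of $g_2^{-1}\D g_2=B$), differentiation collapses the normal term and gives $\D F=\Phi_1\stwovec{\d\V y}0$. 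Hence $\D F$ is tangential and spanned by $\vect e n$; the induced connection on $\ip{\D F}$ is $A$ and the normal connection is $B$, both flat by inspecting the $\l^0$-part of $\D\theta_\l+\theta_\l\wedge\theta_\l=0$. So $F$ is a flat lift with $\I_F$ flat and $N_F$ flat, $\vect e n$ are principal directions and $\vect u n$ a parallel normal frame.

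The remaining assertions follow from $\theta_1$ and the explicit shape of $\d$ in Theorem~\ref{thm:channel}(2), in the same manner as in the proof of Theorem~\ref{thm:ndimconf2}. The dual $1$-forms are $\omega_i=(\d\V y)_i$, each a nonzero multiple of $\D x_i$ on the open set where $\d\V y$ has no vanishing component, because $\d$ is diagonal in its first $p$ rows and equals $(e_{j,n-1}-e_{jn})\D x_j$ in row $j>p$; so the $x_i$ are line of curvature co-ordinates and $F$ is an immersion there. By \eqref{eq:cross1}, $\pi_{N_F}\D\V e_i=\omega_i v_i$, and $\pi_{N_F}\D\V e_i$ equals $\Phi_1$ applied to the $i$-th column of the lower-left block $-J\d^T$ of $\theta_1$, which is a multiple of $\V u_i$ for $i\le p$ and a multiple of $\V u_{n-1}+\V u_n$ for $p<i\le n$. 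Thus the curvature normals are scalar multiples of $\V u_i$ for $i\le p$, while $\V e_{p+1},\dots,\V e_n$ all carry the single \emph{isotropic} curvature normal $v=\V u_{n-1}+\V u_n$; this is precisely the rank-$k$ distribution $E$ of Theorem~\ref{thm:channel1}, so $F$ has constant multiplicities $(1,\dots,1,k)$ with $k=n-p$, and since $\ip{v_1,\dots,v_p,v}$ spans a $(p+1)$-dimensional subspace with $p+1<n=\rank N_F$ the normal bundle is degenerate. The explicit formulae for $\I_F$ and $\II_F$ then follow from these identities together with $F=\Phi_1\stwovec 0{\V y}$.

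I expect the routine bookkeeping behind the fundamental-form formulae to be the bulk of the argument, and the one genuinely delicate point to be the $j>p$ block: because $v$ is isotropic it is defined only up to scale, so one must fix the gauge $v=\V u_{n-1}+\V u_n$ consistently over each leaf of $E$ and verify that the repeated rows of $\d$ really reproduce the second fundamental form with the single term proportional to $\sum_{j>p}\D x_j^2\,v$; one must also confirm that constant multiplicities, and the signature $(n-1,1)$ of the metric on $N_F$, persist on the generic open set of definition, rather than breaking down where a component of $\d\V y$ vanishes.
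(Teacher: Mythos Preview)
Your approach is precisely what the paper does: it gives no separate argument for this theorem, stating only that it ``can be proved in the same way as for Theorems~\ref{thm:ndimconf} and~\ref{thm:ndimconf2}'', and your proposal carries out exactly that program with the non-semisimple $\delta$ from Theorem~\ref{thm:channel}(2) in place of the diagonal one. Your extra care about the $j>p$ block and the isotropy of $v$ is appropriate bookkeeping, not a genuine new difficulty.
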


The discussion of dressing and Ribaucour transforms goes through exactly as in section \ref{sec:rib} for channel immersions that have line of curvature co-ordinates. Since, by \eqref{eq:logdress}, logarithmic derivatives of dressed frames have the same $\fp$-part, and thus similar second fundamental forms, it is clear that dressing a channel hypersurface yields another. Similarly, by the correspondence of theorem \ref{thm:channel}, we also get dressing and Ribaucour transforms of solutions to the $U/K$-system defined by non-Cartan maximal subalgebra $\fa_p$.

We may also repeatedly apply the dressing action of $p_{\a, L}$ to the vacuum solution $\Xi=0$ to construct infinitely many families of conformally flat channel immersions.  These immersions are given by explicit formulae because the extended frame for the vacuum solution is $\exp(\sum_{i=1}^n a_i \l x_i)$.

\newcommand{\noopsort}[1]{}

%\bibliographystyle{ennus}
%\bibliography{ennus}

\end{document}